\definecolor{darkred}{rgb}{0.7, 0.0, 0.0}
\setlist[enumerate]{leftmargin=.5in}
\setlist[itemize]{leftmargin=.5in}
\newtheorem{theorem}{Theorem}[section]
\newtheorem{proposition}[theorem]{Proposition}
\newtheorem{remark}[theorem]{Remark}
\newtheorem{definition}[theorem]{Definition}
\newtheorem{lemma}[theorem]{Lemma}
\newtheorem{example}[theorem]{Example}
\newtheorem{corollary}[theorem]{Corollary}
\newcommand{\aug}{\mathbf{Aug}}
\newcommand{\ult}{\mathbf{Ult}}
\newcommand{\rep}{\mathbf{Rep}(e,m)}
\newcommand{\stair}{\mathbf{Dec}}
\newcommand{\rec}{\mathbf{Rec}}
\newcommand{\eld}{\mathbf{ER}}
\newcommand{\rank}{\mathrm{rank}}
\newcommand \XX{\mathcal{X}=(X,d_X,f_X)}
\newcommand\supp{\mathrm{supp}}
\newcommand\R{\mathbb{R}}
\newcommand\N{\mathbb{N}}
\newcommand\Z{\mathbb{Z}}
\newcommand\ZP{\mathbb{Z}_{\geq0}}
\newcommand\RP{\mathbb{R}_{\geq0}}
\newcommand\A{\mathcal{A}}
\newcommand\B{\mathcal{B}}
\newcommand\Ha{\mathrm{H}}
\newcommand{\eps}{\varepsilon}
\newcommand \rips{\mathcal{R}}
\newcommand \ripsbi{\mathcal{R}^{\mathrm{bi}}}
\newcommand \Pb{\mathbb{P}}
\newcommand \F{\mathbb{F}}
\newcommand \free{\mathcal{F}_{\F}}
\newcommand \hzero{\mathrm{H}_0} 
\newcommand \Hrm{\mathrm{H}}
\newcommand \im{\mathrm{im}}
\newcommand{\vect}{\mathbf{Vec}}
\newcommand{\coker}{\mathrm{coker}}
\newcommand{\hf}{\mathrm{dm}}
\newcommand{\simp}{\mathbf{Simp}}
\newcommand{\gr}{\mathrm{gr}}
\newcommand{\barc}{\mathbf{barc}}
\newcommand{\subpart}{\mathbf{Subpart}}
\newcommand{\Lcal}{\mathcal{L}}
\newcommand{\ba}{\mathbf{a}}
\newcommand{\bb}{\mathbf{b}}
\newcommand{\bc}{\mathbf{c}}
\newcommand{\bx}{\mathbf{x}}
\newcommand{\by}{\mathbf{y}}
\newcommand{\bn}{\mathbf{n}}
\newcommand{\bv}{\mathbf{v}}
\newcommand{\K}{\mathcal{K}}
\newcommand{\Scal}{\mathcal{S}}
\newcommand{\barcp}{\mathbf{barc}}
\newcommand{\one}{\mathds{1}}
\newcommand \X{\mathcal{X}}
\newcommand \Y{\mathcal{Y}}
\newcommand \I {\mathcal{I}}
\newcommand{\lmulti}{\left\{\!\!\left\{}
\newcommand{\rmulti}{\right\}\!\!\right\}}
\newcommand{\abs}[1]{\left\lvert#1\right\rvert}
\newcommand{\filterMS}  {augmented metric space}
\newcommand{\newbarcode}  {elder-rule-staircode} 
\newcommand{\Newbarcode}  {Elder-rule-staircode}
\newcommand{\newstair}   {staircode}
\newcommand{\Newstair}   {Staircode}
\newcommand{\afilterMS}  {an augmented metric space}
\newcommand{\anewbarcode}  {an elder-rule-staircode}
\newcommand{\newbarcodes}  {elder-rule-staircodes} 
\newcommand{\Newbarcodes}  {Elder-rule-staircodes}
\newcommand{\elder}{elder-rule-barcode}
\newcommand{\sigeps}{(\sigma,\eps)}
\newcommand{\bitree}{\theta_X^{\mathrm{bi}}}
\newcommand{\bitreex}{\theta_\X^{\mathrm{bi}}}
\newcommand{\BML}  {|B^L|}
\newcommand{\afilterMSshort}  {an aug-MS} 
\newcommand{\anewbarcodeshort}  {an ER-staircode}
\newcommand{\newbarcodesshort}  {ER-staircodes}
\newcommand{\filterMSsshort}  {aug-MSs}
\newcommand{\filterMSshort}  {aug-MS}
\newcommand{\newbarcodeshort}  {ER-staircode}
\title{Elder-Rule-Staircodes for Augmented Metric Spaces}
\author[1]{Chen Cai\thanks{\texttt{cai.507@osu.edu}}}
\author[3]{Woojin Kim\thanks{\texttt{woojin.kim205@duke.edu}}}
\author[1,2]{Facundo M\'emoli\thanks{\texttt{memoli@math.osu.edu}}}
\author[1]{Yusu Wang\thanks{\texttt{yusu@cse.ohio-state.edu}}}
\affil[1]{Department of Computer Science and Engineering, 
		The Ohio State University.}
\affil[2]{Department of Mathematics, 
		The Ohio State University.
		}
\affil[3]{Department of Mathematics, 
		Duke University.
		}
\begin{document}

\maketitle

\begin{abstract}
An \filterMS{} is a metric space $(X, d_X)$ equipped with a function $f_X: X \to \mathbb{R}$. This type of data arises commonly in practice, e.g, a point cloud $X$ in $\mathbb{R}^d$ where each point $x\in X$ has a density function value $f_X(x)$ associated to it. An \filterMS{} $(X, d_X, f_X)$ naturally gives rise to a 2-parameter filtration $\K$. However, the resulting 2-parameter persistent homology $\Hrm_{\bullet}(\K)$ could still be of wild representation type, and may not have simple indecomposables. In this paper, motivated by the elder-rule for the zeroth homology of 1-parameter filtration, we propose a barcode-like summary, called the \emph{\newbarcode{}}, as a way to encode $\hzero(\K)$. Specifically, if $n = |X|$, the \newbarcode{} consists of $n$ number of staircase-like blocks in the plane. We show that if $\hzero(\K)$ is interval decomposable, then the barcode of $\hzero(\K)$ is equal to the \newbarcode{}. Furthermore, regardless of the interval decomposability, the fibered barcode, the dimension function (a.k.a. the Hilbert function), and the graded Betti numbers of $\hzero(\K)$ can all be efficiently computed once the \newbarcode{} is given.
 Finally, we develop and implement an efficient algorithm to compute the \newbarcode{} in $O(n^2\log n)$ time, which can be improved to $O(n^2\alpha(n))$ if $X$ is from a fixed dimensional Euclidean space $\mathbb{R}^d$, where $\alpha(n)$ is the inverse Ackermann function. 
\end{abstract}

\section{Introduction}
\label{ref:introduction}

 An \filterMS{} is a metric space $(X, d_X)$ equipped with a function $f_X: X \to \mathbb{R}$ \cite{bauer2019cotorsion,carlsson2010multiparameter,chazal2011scalar}. This type of data arises commonly in practice: e.g, a point cloud $X$ in $\mathbb{R}^d$ where each point has a density function value $f_X$ associated to it.  Studying hierarchical clustering methods induced in this setting has attracted much attention starting with \cite{carlsson2010multiparameter} and more recently with \cite{bauer2019cotorsion,campello2013density,martinez2018density}.
 Another example is where $X=V$ equals to the vertex set of a  graph $G=(V, E)$, $d_X$ represents certain graph-induced metric on $X$ (e.g, the diffusion distance induced by $G$), and $f_X$ is some descriptor function (e.g, discrete Ricci curvature) at graph nodes. This graph setting  occurs often in practice for graph analysis applications, where $G$ can be viewed as a skeleton of a hidden domain. When summarizing or characterizing $G$, one wishes to take into consideration both the metric structure of this domain and node attributes. Given that persistence-based summaries from only the edge weights or from only node attributes have already shown promise in graph classification (e.g, \cite{cai2020understanding,carriere2019general,Hofer2017Deep,ZW19}), it would be highly desirable to incorporate (potentially more informative) summaries encoding both types of information to tackle such tasks. In brief, we wish to develop topological invariants induced from such \filterMS{}s. 

On the other hand, an \filterMS{} naturally gives rise to a 2-parameter filtration (by filtering both via $f_X$ and via distance $d_X$; see Definition \ref{def:Rips bifiltration}). However, while a standard (1-parameter) filtration and its induced persistence module has persistence diagram as a complete discrete invariant, multi-parameter persistence modules do not have such complete discrete invariant \cite{CZ09,cohen2007stability}. The 2-parameter persistence module induced from an \filterMS{} may still be of wild representation type, and may not have simple indecomposables \cite{bauer2019cotorsion}. Instead, several recent works consider informative (but not necessarily complete) invariants for multiparameter persistence modules \cite{dey2019generalized,harrington2019stratifying,kim2018rank, lesnick2015interactive,mccleary2019multiparameter,miller2017data,vipond2020multiparameter}. 
In particular, RIVET \cite{lesnick2015interactive} provides an interactive visualization of  barcodes associated to 1-dimensional slices of an input 2-parameter persistence module $M$, which are called the \emph{fibered barcode}. {For implementing the interactive aspect, RIVET  makes efficient use of  \emph{graded Betti numbers} of $M$, another invariant of the 2-parameter persistence module $M$.}

\paragraph{Our contributions.} 
We propose a barcode-like summary, called the \emph{\newbarcode{}}, as a way to encode the zeroth homology of the 2-parameter filtration induced by a finite \filterMS{}.  Specifically, given a finite $\XX$, its \newbarcode{} consists of $n = |X|$ number of staircase-like blocks of $O(n)$ descriptive complexity in the plane. The development of the \newbarcode{} is motivated by the elder-rule behind the construction of persistence pairing for a 1-parameter filtration \cite{edelsbrunner2010computational}. For the 1-parameter case, \emph{barcodes} \cite{zomorodian2005computing} can be obtained by the decomposition of persistence modules in the realm of commutative algebra, or equivalently, by applying the elder-rule which is flavored with combinatorics or order theory. As we describe in Section \ref{sec:decorated staircodes and treegrams}, our \newbarcode{}s are obtained by adapting the elder-rule for treegrams arisen from 1-parameter filtration.

Interestingly, we show that our \newbarcode{} encodes much of topological information of the 2-parameter filtration $\K$ induced by $\X$. In particular, the fibered barcodes, the fibered treegrams, and the graded Betti numbers associated to $\hzero(\K)$ can all be efficiently computed from the \newbarcode{}s (see Theorems \ref{thm:fibered barcodes}, \ref{thm:fibered treegrams} and \ref{thm:recover graded Bettis}).
{Furthermore, if $\hzero(\K)$ is interval decomposable, then the interval indecomposables appearing in its decomposition correspond exactly to its \newstair{} (see Theorem \ref{thm:strong elder rule}). This implies that testing the interval decomposability of $\hzero(\K)$ is reduced to testing isomorphism of two given persistence modules \cite{brooksbank2008testing} (see Remark \ref{rem:testing interval decomposability}).  We also provide sufficient conditions on $\X$ which ensure the interval decomposability of $\hzero(\K)$ (see Theorem \ref{thm:elder rule2} and Corollary \ref{cor:ultrametric}). Therefore, to explore exotic isomorphism types of indecomposable summands of $\hzero(\K)$ (a question of interest considered in \cite{bauer2019cotorsion}), it suffices to restrict our attention to augmented metric spaces which do not satisfy these conditions.}

Finally, in Section \ref{sec:algorithm}, we show that the \newbarcode{} can be computed in $O(n^2 \log n)$ time for a finite \filterMS{} $(X, d_X, f_X)$ where $n = |X|$, and $O(n^2 \alpha(n))$ time if $X$ is from a fixed dimensional Euclidean space and $d_X$ is Euclidean distance. 
We have software to compute \newbarcode{}s and to explore / retrieve information such as fibered barcodes interactively, which is available at \url{https://github.com/Chen-Cai-OSU/ER-staircode}.

\paragraph{More on related work.} 
The \emph{elder-rule} is an underlying principle for extracting the persistence diagram from a persistence module induced by a nested family of simplicial complexes \cite[Chapter 7]{edelsbrunner2010computational}. Recently this principle has come into the spotlight again for generalizing persistence diagrams \cite{kim2018rank,mccleary2019multiparameter,patel2018generalized} and for addressing inverse problems in TDA \cite{curry2018fiber}. {An algorithm for testing interval decomposability of multiparameter persistence modules has been studied  \cite{asashiba2018interval}. A method to approximate 2-parameter persistence modules by interval-decomposable persistence modules has been proposed  \cite{asashiba2019approximation}.}

The software RIVET and work of \cite{lesnick2019computing} can also be used to recover fibered barcodes and graded Betti numbers. However, for the special case of zeroth 2-parameter persistence modules induced from \filterMS{}s, our \newbarcode{}s are simpler and more efficient to achieve these goals: In particular, given an \filterMS{} containing $n$ points,  the algorithm of \cite{lesnick2019computing} computes the graded Betti numbers in $\Omega(n^3)$ time, while it takes $O(n^2 \log n)$ time using \newbarcode{} via Theorem \ref{thm:computation}. 
For zeroth fibered barcodes, RIVET takes $O(n^8)$ time to compute a data structure of size  $O(n^{6})$ so as to support efficient query time of $O(\log n + |B^L |)$ where $|B^L|$ is the size of the fibered barcode $B^L$ for a particular line $L$ of positive slope.
Our algorithm computes  \newbarcode{} of size $O(n^2)$ in $O(n^2\log n)$ time, after which $B^L$ can be computed in $O(|B^L| \log n)$ time for any query line $L$. See Section \ref{appendix:comparasion} for more detailed comparison. 
However, it is important to note that RIVET allows much broader inputs and can work beyond zeroth homology.

\paragraph{Outline.} {In Section \ref{sec:preliminaries} we review the definitions of persistence modules, barcodes, and graded Betti numbers.  In Section \ref{sec:newbarcode} we introduce a 2-parameter filtration $\K$ induced by an \filterMS{} $\X$ and define the \newbarcode{} of $\X$. In Section \ref{sec:decorated staircodes and treegrams} we show that the \newbarcode{} recovers the fibered barcode of $\hzero(\K)$. We also prove that, if $\hzero(\K)$ is interval decomposable, then the set of indecomposables corresponds exactly to the \newstair{}.  In Section \ref{sec:staircodes and Bettis} we show that the \newbarcode{} recovers the graded Betti numbers of $\hzero(\K)$. In Section \ref{sec:algorithm} we develop and implement an efficient algorithm to compute the \newbarcode{}. In Section \ref{sec:conclusion} we discuss open problems. For readability we have relegated some proofs to an appendix.}

\paragraph{Acknowledgements.} {The authors thank to the anonymous reviewers who made a number of helpful comments to improve the paper. Also, CC and WK thank Cheng Xin for helpful discussions. This work is supported by NSF grants DMS-1723003, CCF-1740761, DMS-1547357, and IIS-1815697.}

%%%%%%%%%%%%%%%%%%%%%%%%%%%%%%%%%%%%%%%%%%%%%%%%%%%%%%%%%%%%%%%%%%%%%%%%%%%%%%%%%%%%%%%%%%%%%%%%%%%%%%%%%%
\section{Preliminaries}\label{sec:preliminaries}
In Section \ref{sec:persistence modules I} we review the definitions of persistence modules and their barcodes. In Section \ref{sec:graded Betti numbers} we review the notion of graded Betti number of a persistence module.
\subsection{Persistence modules and their decompositions}\label{sec:persistence modules I}

First we briefly review the definition of persistence modules. Let $\Pb$ be a poset. We regard $\Pb$ as the category that has elements of $\Pb$ as objects. Also, for any $\ba,\bb\in \Pb$, there exists a unique morphism $\ba\rightarrow \bb$ if and only if $\ba\leq \bb$. For $d\in\N$, let $\Z^d$ be the set of $d$-tuples of integers equipped with the partial order defined as $(a_1,a_2,\ldots,a_d)\leq (b_1,b_2,\ldots,b_d)$ if and only if $a_i\leq b_i$ for each $i=1,2,\ldots,d$.  The poset structure on $\R^d$ is defined in the same way. 

We fix a certain field $\F$ and every vector space in this paper is over $\F$. Let $\vect$ denote the category of \emph{finite dimensional} vector spaces over $\F$.

A ($\Pb$-indexed) \emph{persistence module} is a functor $M:\Pb\rightarrow \vect$. In other words, to each $\ba\in \Pb$, a vector space $M(\ba)$ is associated, and to each pair $\ba\leq \bb$ in $\Pb$, a linear map $\varphi_{M}(\ba,\bb):M(\ba)\rightarrow M(\bb)$ is associated. When $\Pb=\R^d$ or $\Z^d$, $M$ is said to be a \emph{$d$-parameter persistence module}. A \emph{morphism} between  $M,N:\Pb\rightarrow \vect$ is a natural transformation $f:M\rightarrow N$ between $M$ and $N$. That is, $f$ is a collection $\{f_{\ba}\}_{\ba\in \Pb}$ of linear maps such that for every pair $\ba\leq\bb$ in $\Pb$, the following diagram commutes:
 \[\begin{tikzcd}
M(\ba) \arrow{r}{\varphi_M(\ba,\bb)} \arrow{d}{f_{\ba}}
&M(\bb) \arrow{d}{f_{\bb}}\\
N(\ba) \arrow{r}{\varphi_N(\ba,\bb)} &N(\bb).
\end{tikzcd}\]
Two persistence modules $M$ and $N$ are \emph{isomorphic}, denoted by $M\cong N$, if there exists a natural transformation $\{f_{\ba}\}_{\ba\in \Pb}$ from $M$ to $N$ where each $f_{\ba}$ is an isomorphism. 

We now review the standard definition of barcodes, following notation from \cite{botnan2018algebraic}. 
\begin{definition}[Intervals]\label{def:intervals}
Let $\Pb$ be a poset. An \emph{interval} $\mathcal{J}$ of $\Pb$ is a subset $\mathcal{J}\subset \Pb$ s.t.: 	(1) $\mathcal{J}$ is non-empty. 
 		(2) If $\ba,\bb\in \mathcal{J}$ and $\ba\leq \bc\leq \bb$, then $\bc\in \mathcal{J}$. 
 		(3) For any $\ba,\bb\in \mathcal{J}$, there is a sequence $\ba=\ba_0,
		\ba_1,\cdots,\ba_l=\bb$ of elements of $\mathcal{J}$ with $\ba_i$ and $\ba_{i+1}$ comparable for $0\leq i\leq l-1$.
\end{definition}

\label{interval module}For $\mathcal{J}$ an interval of $\Pb$, the \emph{interval module} ${I}^{\mathcal{J}}:\Pb\rightarrow \vect$ is defined as
\[{I}^{\mathcal{J}}(\ba)=\begin{cases}
\mathbb{F}&\mbox{if}\ \ba\in \mathcal{J},\\0
&\mbox{otherwise.} 
\end{cases}\hspace{20mm} \varphi_{I^{\mathcal{J}}}(\ba,\bb)=\begin{cases} \mathrm{id}_\mathbb{F}& \mbox{if} \,\,\ba,\bb\in\mathcal{J},\ \ba\leq \bb,\\ 0&\mbox{otherwise.}\end{cases}\]

Recall that \textit{a multiset} is a collection of objects (called elements) in which elements may occur more than once, and the number of instances of an element is its \textit{multiplicity}. 

\begin{definition}[Interval decomposability and barcodes]\label{def:interval decomposable}\label{def:barcode}A functor $M:\Pb\rightarrow \vect$ is \emph{interval decomposable} if there exists a multiset $\barcp(M)$ of intervals (Definition \ref{def:intervals}) of $\Pb$ such that $M\cong \bigoplus_{\mathcal{J}\in\barcp(M)}I^{\mathcal{J}}.$ We call $\barcp(M)$ the \emph{barcode} of $M.$
\end{definition}

By the theorem of Azumaya-Krull-Remak-Schmidt \cite{azumaya1950corrections}, such a decomposition is unique up to a permutation of the terms in the direct sum. Therefore, the multiset $\barcp(M)$ is unique if $M$ is interval decomposable. 
For $d=1$, any  $M:\R^d\ \mbox{(or $\Z^d$)}\ \rightarrow\vect$ is interval decomposable and thus $\barcp(M)$ exists. However, for $d\geq 2$, $M$ may not be interval decomposable. 

\subsection{Graded Betti numbers}\label{sec:graded Betti numbers}

\paragraph{Persistence module as a module over a polynomial ring.}  In Section \ref{sec:persistence modules I} we defined $d$-parameter persistence modules as $\vect$-valued functors over the posets $\Z^d$ or $\R^d$, and morphisms between them as natural transformations. Definitions below are equivalent to those definitions \cite[Theorem 1]{CZ09}, and allow us to define the graded Betti numbers of persistence modules.  We mostly adopt notation in \cite{dey2019generalized,lesnick2019computing}.

Let $\F[t_1,t_2,\ldots,t_d]$ be the polynomial ring in the $d$-variables $t_1,t_2,\ldots,t_d$.  To ease notation, for $\bn:=(n_1,n_2,\ldots,n_d)\in \ZP^d$, the monomial $t_1^{n_1}t_2^{n_2}\ldots t_d^{n_d}\in \F[t_1,t_2,\ldots,t_n]$ will be written as $\bx^\bn$. 
A $d$-parameter persistence module $M:\Z^d\rightarrow \vect$ is an $\F[t_1,t_2,\ldots,t_d]$-module $M$ with a direct sum decomposition as an $\F$-vector space $M\cong \bigoplus_{\ba\in \Z^d}M_{\ba}$ such that the action of $\F[t_1,t_2,\ldots,t_d]$ on $M$ is uniquely specified as follows: for all $\ba=(a_1,a_2,\ldots,a_d)\in \Z^d$ and $v\in M_{\ba}$, and  for all $\bn=(n_1,n_2,\ldots,n_d)\in \Z_{\geq0}^d$, and for all $c\in k$,
\[(c\cdot \bx^\bn)\cdot v:=c\cdot \varphi_M(\ba,\ba+\bn)(v).\]
Let $M$ and $N$ be any two persistence modules. A morphism $f:M\rightarrow N$ is the module homomorphism such that $f(M_{\ba})\subseteq N_{\ba}$ for all $\ba\in \Z^d$. 
Since our interests are in studying finite \filterMSsshort{}, we will restrict ourselves to \emph{finite persistence modules} \cite{CZ09}---the $k$-th homology of a filtration of a finite simplicial complex for some $k\in \ZP$--- in what follows.

The kernel, image, and cokernel of $f$ are analogously defined to those of a linear map between vector spaces. The \emph{kernel} of $f$ is defined as the submodule $\ker(f):=\bigoplus_{\ba\in \Z^d}\ker(f_{\ba})$ of $M$. The image of $f$ is defined as the submodule $\im(f):=\bigoplus_{\ba\in \Z^d}\im(f_{\ba})$ of $N$.  The \emph{cokernel} of $f$ is defined as $\coker(f):=\bigoplus_{\ba\in \Z^d}\left(N_{\ba}/\im(f_{\ba})\right)$.

\paragraph{Graded Betti numbers.} We briefly review the concept of \emph{graded Betti numbers} \cite{CZ09,knudson2007refinement,lesnick2015interactive,lesnick2019computing,peeva2010graded,zomorodian2005computing}. Since our interests are in studying finite \filterMSsshort{}, we restrict ourselves to \emph{finite} persistence modules ---the $k$-th homology of a filtration of a finite simplicial complex for some $k\in \ZP$ \cite{CZ09}.

Fix $\ba\in \Z^d$.  By $Q^{\ba}:\Z^d\rightarrow \vect$, we denote the persistence module defined as
\[ Q^{\ba}_\bx=\begin{cases} \F, &\mbox{if $\ba\leq \bx$}\\ 0, &\mbox{otherwise,}
\end{cases}\hspace{10mm} \varphi_{Q^{\ba}}(\bx,\by)=\begin{cases}\mathrm{id}_{\F},&\mbox{if $\ba\leq \bx$}\\0,&  \mbox{otherwise}.
\end{cases}
\]
Any $F:\Z^d\rightarrow \vect$ is said to be \emph{free} if there exists a multiset $\A$ of elements of $\Z^2$ such that $F\cong \bigoplus_{\ba\in \A}Q^{\ba}$. For simplicity, we will refer to free persistence modules as free modules.
Let $M$ be a persistence module. An element $m\in M_{\ba}$ for some $\ba\in \Z^d$ is called a homogeneous element of $M$. In this case, we write $\gr(m)=\ba$. 
Let $F$ be a free module. A \emph{basis} $B$ of $F$ is defined as a minimal homogeneous set of generators of $F$. 
There can exist two bases $B$ and $B'$ of $F$ (analogous to the fact that a vector space can have multiple bases). However, the number of elements at each grade $\ba\in \Z^d$ in a basis of $F$ is an isomorphism invariant.

For a finite $M$, let $IM$ denote the submodule of $M$  generated by the images of all linear maps $\varphi_{M}(\ba,\bb)$, with $\ba<\bb$ in $\Z^2$.
Assume that there is a chain of modules
\begin{equation}\label{eq:resolution}F^{\bullet}:\begin{tikzcd}\cdots\arrow{r}{\partial_3}&F^2\arrow{r}{\partial_2}&F^1\arrow{r}{\partial_1}&F^0 \arrow{r}{\partial_0} 
&M \arrow{r}{0(=:\partial_{-1})}& 0\end{tikzcd}
\end{equation}
such that (1) each $F^i$ is a free module, and (2) $\im(\partial^i)=\ker(\partial^{i-1})$, $i=0,1,2,\cdots.$ Then we call $F^{\bullet}$ a \emph{resolution} of $M$. The condition (2) is referred to as \emph{exactness} of $F^{\bullet}$. We call the resolution $F^{\bullet}$ \emph{minimal} if $\im(\partial^i) \subseteq IF^{i-1}$ for $i=1,2,\cdots$. It is a standard fact that a minimal resolution of $M$ always exists and is unique up to isomorphism \cite[Chapter I]{peeva2010graded}.

\begin{definition}[Graded Betti numbers]\label{def:graded betti} Let $M:\Z^d\rightarrow \vect$ be finite. 
Assume that a minimal free resolution of $M$ is $F^{\bullet}$ in (\ref{eq:resolution}). 
For $i\in \ZP$, the $i$-th graded Betti number $\beta_i^M:\Z^d\rightarrow \ZP$ is defined as $\beta_i^M(\ba)=\mbox{(number of elements at grade $\ba$ in any basis of $F^i$)}.$
\end{definition}

\begin{remark}\label{rem:graded betti numbers}
\begin{enumerate}[label=(\roman*)]
    \item {Note that if $M\cong N_1\bigoplus N_2$, then $\beta_{i}^M=\beta_{i}^{N_1}+\beta_{i}^{N_2}$. This is a key fact to define the \emph{persistent graded Betti numbers} introduced in \cite{dey2019generalized}.\label{item:graded betti numbers1}}
    \item 
    $\beta_i^M:\Z^d\rightarrow \ZP$ is the zero function  for every integer $i> d$ {\cite[Theorem 1.13]{eisenbud2013commutative}}.
    \item   Definition \ref{def:graded betti} is not in the exactly same form as those in the literature such as \cite{CZ09,knudson2007refinement,lesnick2015interactive}. However, by Nakayama's lemma \cite[Lemma 2.11]{peeva2010graded} all those are equivalent, as already noted in \cite[Section 2.3]{lesnick2019computing}.
\end{enumerate}

\end{remark}

For any $M:\Z^d\rightarrow \vect$, the \emph{dimension function} $\hf(M):\Z^d\rightarrow \ZP$ of $M$ is defined as $\ba\mapsto \dim M_{\ba}$. The graded Betti numbers of $M$ recover $\hf(M)$:
\begin{theorem}[{\cite[Proposition 2.3]{lesnick2019computing}}]\label{thm:betti formula}Let $M:\Z^d\rightarrow \vect$ be a finite persistence module. For all $\ba\in \Z^d$,
 \[\hf(M)(\ba)=\sum_{\bx\leq \ba}\sum_{i=0}^d(-1)^i\beta_i^M(\bx).
 \]
\end{theorem}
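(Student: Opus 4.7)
The plan is to obtain the formula by taking the Euler characteristic of a minimal free resolution of $M$ at each grade $\ba \in \Z^d$. The proof reduces to two routine observations: (a) a free module of the form $Q^{\bx}$ contributes a one-dimensional $\F$-summand at grade $\ba$ precisely when $\bx \leq \ba$, and (b) a minimal free resolution of $M$, being an exact complex of $\Z^d$-graded modules whose differentials preserve grade, restricts for each fixed $\ba$ to an exact complex of finite dimensional $\F$-vector spaces, to which the standard Euler-Poincaré principle applies.

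First I would fix a minimal free resolution $F^{\bullet}$ of $M$ as in~(\ref{eq:resolution}); this exists and is unique up to isomorphism by the facts recalled just before Definition~\ref{def:graded betti}. By Remark~\ref{rem:graded betti numbers}(ii), $\beta_i^M \equiv 0$ for $i > d$, so we may truncate and assume $F^i = 0$ for $i > d$. Writing each $F^i \cong \bigoplus_{\bx \in \Z^d} (Q^{\bx})^{\beta_i^M(\bx)}$ in terms of its basis and using the definition of $Q^{\bx}$, I would compute the dimension at grade $\ba$:
\[
\dim F^i_{\ba} \;=\; \sum_{\bx \in \Z^d} \beta_i^M(\bx)\cdot \dim Q^{\bx}_{\ba} \;=\; \sum_{\bx \leq \ba} \beta_i^M(\bx).
\]

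Next, since each differential $\partial_i : F^i \to F^{i-1}$ is a morphism of persistence modules, it restricts to an $\F$-linear map $(\partial_i)_{\ba} : F^i_{\ba} \to F^{i-1}_{\ba}$, and the exactness condition $\im(\partial^i) = \ker(\partial^{i-1})$ from~(\ref{eq:resolution}) unfolds grade-by-grade, yielding the exact sequence of finite dimensional vector spaces
\[
0 \longrightarrow F^d_{\ba} \longrightarrow \cdots \longrightarrow F^1_{\ba} \longrightarrow F^0_{\ba} \longrightarrow M_{\ba} \longrightarrow 0.
\]
Applying the Euler-Poincaré formula for exact complexes of finite dimensional vector spaces gives
\[
\hf(M)(\ba) \;=\; \dim M_{\ba} \;=\; \sum_{i=0}^{d} (-1)^i \dim F^i_{\ba} \;=\; \sum_{i=0}^{d} (-1)^i \sum_{\bx \leq \ba} \beta_i^M(\bx),
\]
which is exactly the claimed identity after exchanging the order of summation.

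The only subtlety, which I do not expect to be a real obstacle, is ensuring that each $F^i_{\ba}$ is finite dimensional so that the Euler characteristic argument is legal. This follows because $M$ is finite, which forces the basis multisets of any minimal free resolution to be locally finite in $\Z^d$; consequently the sum $\sum_{\bx \leq \ba} \beta_i^M(\bx)$ is finite for every $\ba$. Once this is observed, the remainder of the argument is purely formal.
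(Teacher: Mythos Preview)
The paper does not supply its own proof of this theorem; it is quoted verbatim as \cite[Proposition 2.3]{lesnick2019computing} and used as a black box. Your argument is correct and is in fact the standard proof: localize a minimal free resolution at a grade $\ba$, observe that exactness is preserved, and take the alternating sum of dimensions. The only point worth tightening is the finiteness remark at the end---since $M$ is assumed \emph{finite} in the paper's sense (the $k$-th homology of a filtration of a finite simplicial complex), the basis multiset of each $F^i$ is finite, not merely locally finite, so each $\dim F^i_{\ba}$ is automatically finite and no further justification is needed.
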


%%%%%%%%%%%%%%%%%%%%%%%%%%%%%%%%%%%%%%%%%%%%%%%%%%%%%%%%%%%%%%%%%%%%%%%%%%%%%%%%%%%%%%%%%
\section{{\Newbarcode{}s} for \filterMS{}s }\label{sec:newbarcode}

\paragraph{Rips bifiltration for \afilterMSshort.}  Let $(X,d_X)$ be a metric space. For $\eps\in \R$, the \emph{Rips complex} $\rips_{\eps}(X,d_X)$ is the abstract simplicial complex defined as
\[\rips_{\eps}(X,d_X)=\{A\subseteq X: \mbox{for all $x,x'\in A$, $d_X(x,x')\leq \eps$}\}. \]
Let $\simp$ be the category of abstract simplicial complexes and simplicial maps. The \emph{Rips filtration} is the functor $\rips_{\bullet}(X,d_X):\R\rightarrow \simp$ defined as 
\begin{align*}
    \eps \mapsto \rips_{\eps}(X,d_X), \ \mbox{and} \ \ 
\eps\leq \eps' \mapsto \rips_{\eps}(X,d_X)\hookrightarrow \rips_{\eps'}(X,d_X).
\end{align*}

\begin{definition}[Augmented metric spaces]\label{def:\filterMS} Let $(X,d_X)$ be a metric space and $f_X:X\rightarrow \R$ a function. We call the triple $\XX$ \emph{\afilterMS} (abbrev. \emph{\filterMSshort{}}). 

We say that  $\X$ is \emph{injective} if  $f_X:X\rightarrow \R$ is an injective function.
\end{definition}

Throughout this paper, every (augmented) metric space will be assumed to be finite. Let $\XX$ be \afilterMSshort. For $\sigma\in \R$, let $X_\sigma$  denote the sublevel set $f_X^{-1}(-\infty,\sigma]\subseteq X$. Let $(X_{\sigma},d_X)$ denote the restriction of the metric space $(X,d_X)$ to the subset $X_{\sigma}\subseteq X$. Similarly,  $(X_{\sigma},d_X,f_X)$ is the \filterMSshort{} obtained by restricting $d_X$ to $X_{\sigma}\times X_{\sigma}$ and $f_X$ to $X_{\sigma}$. The following 2-parameter filtration is considered in \cite{bauer2019cotorsion,carlsson2010multiparameter,carlsson2009theory} in the context of \emph{filtered} single linkage hierarchical clustering or \emph{filtered} persistent homology:

\begin{definition}[Rips bifiltration of \afilterMSshort]\label{def:Rips bifiltration} Let $\XX$ be \afilterMSshort. We define the \emph{Rips bifiltration} $\ripsbi_{\bullet}(\X):\R^2\rightarrow \simp$ of $\X$ as $(\eps,\sigma)\mapsto \rips_{\eps}(X_{\sigma},d_{X})$.
\end{definition}

By applying the $k$-th simplicial homology functor to the Rips bifiltration $\ripsbi_{\bullet}(\X)$, we obtain the persistence module $M:=\Ha_{k}(\ripsbi_{\bullet}(\X)):\R^2\rightarrow \vect$. 
Let $\Lcal$ denote the set of all lines of (strictly) positive slopes in $\R^2$. Given  $L\in \Lcal$, the restriction $M|_{L}:L\rightarrow \vect$ can be decomposed into the unique direct sum of interval modules over $L$ and thus we have the barcode $\barcp(M|_{L})$ of $M|_{L}$. The \emph{$k$-th fibered barcode} of $\X$ refers to the $\Lcal$-parametrized collection $\{\barcp(M|_{L})\}_{L\in \Lcal}$ {\cite{cerri2013betti,landi2018rank,lesnick2015interactive}}.

\paragraph{\Newbarcode{} for \afilterMSshort.} Let $(X,d_X)$ be a finite metric space. For $\eps\in[0,\infty)$, an \emph{$\eps$-chain} between $x,x'\in X$ stands for a sequence $x=x_1,x_2,\ldots,x_\ell=x'$ of points in $X$ such that $d_X(x_i,x_{i+1})\leq \eps$ for $i=1,\ldots,\ell-1$. 
Now given $\XX$ and $\sigma\in \RP$, consider a point $x\in X_\sigma$. Then for any $\eps \ge 0$, set $[x]_{\sigeps}$ as the collection of all points $x'\in X_{\sigma}$ that can be connected to $x$ through an $\eps$-chain  in $X_{\sigma}$.

The function $f_X: X \to \R$ induces an order on $X$:   consider any two $x, x'\in X$. If $f_X(x)<f_X(x')$, then we say that $x$ is \emph{older than} $x'$.

\begin{definition}[\Newbarcode{} for \afilterMSshort]\label{def:elder rule2}
Let $\XX$ be an \emph{injective} \filterMSshort. For each $x\in X$, we define \emph{its \newstair{}} as: 
\begin{align}\label{eq:barcode}
    I_{x}:&=\{(\sigma,\eps)\in \R^2: x\in X_{\sigma}\ \mbox{and $x$ is the oldest in $[x]_{(\sigma,\eps)}$ } \}
\end{align}
The collection $\I_{\X}:=\{I_{x}\}_{x\in X}$ is called the \emph{\newbarcode{} (\newbarcodeshort{} for short) of $\X$}.
\end{definition}

See Figure \ref{fig:elder-rule-staircode} for an example. The relationship between the \newbarcodeshort{} and the classic elder-rule will become clear in Section \ref{sec:treegrams}. 

\begin{figure}
    \centering
    \includegraphics[width=\textwidth]{}
    \caption{(A) Consider the triangle with edge lengths 3,4 and 5. Consider the \filterMSshort{} $\XX$ where $X:=\{x_1,x_2,x_3,x_4\}$, $d_X$ is the Euclidean metric on the plane, and $f_X$ is given as $f_X(x_i)=i$ for $i=1,2,3,4$.  (B) The \newbarcodeshort{} of $\X$.}
    \label{fig:elder-rule-staircode}
 
\end{figure}
\begin{definition}\label{def:staircase} An interval $I$ of $\R^2$ (Definition \ref{def:intervals}) is  a \emph{staircase interval (or simply staircase)} if there exists $(\sigma_0,\eps_0)\in I$ such that $(\sigma_0,\eps_0)\leq (\sigma,\eps)$ for all $(\sigma,\eps)\in I$, and $I$ is not bounded in the direction of $\sigma$-axis {(see Figure \ref{fig:corner types})}. 
\end{definition}
It turns out that each $I_{x}\in\I_{\X}$ is a staircase interval:

\begin{proposition}\label{prop:staircode is an interval} Each $I_x$ in Definition \ref{def:elder rule2} is a staircase interval (proof in Appendix \ref{sec:proofs}).
\end{proposition}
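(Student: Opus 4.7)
The plan is to exhibit an explicit minimum of $I_x$, namely the point $\bm_x := (f_X(x), 0)$, and then derive all three conditions of Definition \ref{def:intervals}, together with the $\sigma$-unboundedness, from the monotonicity of the operator $(\sigma, \eps) \mapsto [x]_{(\sigma, \eps)}$. First I would verify that $\bm_x \in I_x$: the condition $x \in X_{f_X(x)}$ is automatic, and since $d_X$ is a genuine metric on a finite set, every $0$-chain is constant, forcing $[x]_{\bm_x} = \{x\}$, so $x$ is vacuously the oldest. This already gives non-emptiness, and since $\bm_x \leq (\sigma, \eps)$ for every $(\sigma, \eps) \in I_x$ (the first-coordinate inequality is exactly the sublevel condition $x \in X_{\sigma}$, and $\eps \geq 0$ is built into the definition), $\bm_x$ is the required minimum.

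The key observation for the remaining interval axioms is the two-variable monotonicity $[x]_{(\sigma, \eps)} \subseteq [x]_{(\sigma', \eps')}$ whenever $(\sigma, \eps) \leq (\sigma', \eps')$: enlarging $\sigma$ enlarges the ambient sublevel set $X_\sigma$, and enlarging $\eps$ turns any $\eps$-chain into an $\eps'$-chain, so every chain witnessing membership in the smaller component also witnesses membership in the larger one. Order-convexity follows immediately: if $(\sigma_i, \eps_i) \in I_x$ for $i=1,2$ and $(\sigma_1, \eps_1) \leq (\sigma, \eps) \leq (\sigma_2, \eps_2)$, then $x \in X_{\sigma_1} \subseteq X_\sigma$ and $[x]_{(\sigma, \eps)} \subseteq [x]_{(\sigma_2, \eps_2)}$, so the fact that $x$ is oldest in the larger component transfers to the smaller. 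The comparability condition of Definition \ref{def:intervals} is then trivial, because every element of $I_x$ is comparable to (and above) $\bm_x \in I_x$, so any two points of $I_x$ can be joined through $\bm_x$ in two comparable steps.

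For the $\sigma$-unboundedness, I would exhibit a horizontal ray inside $I_x$ at small height. Let $\eps_x := \min\{d_X(x, y) : y \in X \setminus \{x\}\}$, which is strictly positive because $X$ is finite and $d_X$ is a metric. For any $\eps$ with $0 \leq \eps < \eps_x$, the point $x$ has no $\eps$-neighbor in $X$ at all, so $[x]_{(\sigma, \eps)} = \{x\}$ for every $\sigma \geq f_X(x)$; hence the entire ray $\{(\sigma, \eps) : \sigma \geq f_X(x)\}$ sits inside $I_x$, which is therefore unbounded in the $\sigma$-direction. I do not anticipate a serious obstacle: the argument is a bookkeeping exercise whose only substantive ingredient is the monotonicity of the $\eps$-component operator, and the minor delicacy of permitting $\eps = 0$ in the definition of $\eps$-chain is taken care of by the metric (rather than pseudometric) hypothesis on $d_X$.
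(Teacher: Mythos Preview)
Your argument is correct. It differs in emphasis from the paper's proof: rather than verifying the interval axioms one by one, the paper splits into the case where $x$ realizes the minimum of $f_X$ (in which case $I_x$ is the full quadrant $U(f_X(x),0)$) and the non-minimum case, where it introduces the auxiliary function $u_y(\sigma):=\min\{\eps\ge 0:\text{some older $z$ lies in }[y]_{(\sigma,\eps)}\}$, observes that $u_y$ is non-increasing and piecewise constant by finiteness, and then identifies $I_y$ with the hypograph region $\{(\sigma,\eps):\sigma\ge f_X(y),\ 0\le \eps<u_y(\sigma)\}$. From that explicit description the staircase-interval property is read off at a glance.

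Your route is more elementary and self-contained: the monotonicity $[x]_{(\sigma,\eps)}\subseteq [x]_{(\sigma',\eps')}$ for $(\sigma,\eps)\le(\sigma',\eps')$ is exactly the mechanism that makes the paper's $u_y$ non-increasing, so the two proofs share the same engine. What the paper's description buys is a concrete picture of the upper boundary of $I_x$ as the graph of a step function, which is later used implicitly when discussing corner points and the $O(n)$ complexity bound on each $I_x$; your axiomatic verification does not yield that picture directly, but it is not needed for the proposition as stated.
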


\paragraph{\Newstair{}s for non-injective case.} Even if $f_X$ is not injective, we still have the concept of the \newbarcodeshort{}. Consider \afilterMSshort{} $\XX$ such that $f_X$ is not injective. To induce the \newbarcodeshort{} of $\X$, we pick any order on $X$ which is \emph{compatible} with $f_X$: An order $<$ on $X$ is compatible with $f_X$ if $f_X(x)<f_X(x')$ implies $x<x'$ for all $x,x'\in X$. Now we define $\I_{\X}^<=\lmulti I_x^< :x\in X \rmulti$ where
 \begin{equation}\label{eq:order-induced block}
     I_x^{<}:=\{\sigeps\in \R^2: x\in X_{\sigma}\ \mbox{and $x=\min ([x]_{\sigeps},<)$}\}
 \end{equation}
 (we use double-curly-brackets $\lmulti- \rmulti$ to denote multisets). Regardless of the choice of $<$, the collection $\I_{\X}^<=\lmulti I_x^< :x\in X \rmulti$ satisfies all properties and theorems we prove later. Hence, for any possible compatible order $<$ we will refer to $\I_\X^<$ as an \emph{\newbarcodeshort{}} of $\X$.

\begin{example}[Constant function case]\label{ex:constant} Let $(X,d_X)$ be a metric space of $n$ points. Then, the barcode of $\hzero(\rips_{\bullet}(X,d_X)):\R\rightarrow \vect$ consists of $n$ intervals $J_i$, $i=1,\ldots, n$. Let $\XX$ be the \filterMSshort{} where $f_X$ is constant at $c\in \R$. Then, all possible total orders on $X$ are compatible with $f_X$ and all induce the same \newbarcodeshort{} $\I_{\X}=\lmulti [c,\infty)\times J_i:i=1,\ldots,n\rmulti$. 
\end{example}

In contrast to Example \ref{ex:constant}, different orders on $X$ in general induce different  \newbarcodeshort{}s of $\XX$ ; see Example \ref{ex:two barcodes}. Therefore, a single \newbarcodeshort{} of $\X$ is not necessarily an \emph{invariant} of $\X$, whereas the collection of all possible \newbarcodeshort{}s of $\X$ can be seen so (see item \ref{item:metrics and stability} in Section \ref{sec:conclusion}). This collection, however, is {\it not a complete invariant of $\X$} by the following reasoning: It is not difficult to find two non-isometric metric spaces $(X,d_X)$ and $(Y,d_Y)$ such that $\hzero(\rips_{\bullet}(X,d_X))$ and $\hzero(\rips_{\bullet}(Y,d_Y))$ have the same barcode. Let $f_X:X\rightarrow \R$ and $f_Y:Y\rightarrow \R$ be constant at $c\in \R$. Then, by Example \ref{ex:constant}, all the \newbarcodeshort{}s of $(X,d_X,f_X)$ and $(Y,d_Y,f_Y)$ (induced by all possible total orders on $X$ and $Y$) are the same (see item \ref{item:completeness} in Section \ref{sec:conclusion}).

We can recover the zeroth fibered barcode of \afilterMSshort{} $\X$ from its \newbarcodeshort{}: 
Computation of \newbarcodeshort{} and query time for fibered barcode are given in Theorem \ref{thm:computation}.

\begin{theorem}\label{thm:fibered barcodes}Let $\X$ be \afilterMSshort{} and let $M:=\hzero(\ripsbi_{\bullet}(\X))$. Let $\I_\X=\lmulti I_x:x\in X\rmulti$ be an \newbarcodeshort{} of $\X$. For each $L\in \Lcal$, the barcode $\barcp(M|_{L})$ coincides with the multiset
$\lmulti L\cap I_x:x\in X \rmulti$ (up to removal of empty sets, see Figure \ref{fig:sliced barcode}), (proof in Section \ref{proof:fibered barcodes}). 
\end{theorem}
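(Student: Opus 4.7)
The plan is to reduce the statement to the classical one-parameter elder rule applied along $L$. Parameterize $L\in \Lcal$ by a strictly increasing $t\mapsto \ell(t)=(\sigma(t),\eps(t))$; since $L$ has strictly positive slope, both $\sigma(t)$ and $\eps(t)$ are strictly increasing in $t$. The restriction $M|_L$ is then the one-parameter module $t\mapsto \hzero(\rips_{\eps(t)}(X_{\sigma(t)},d_X))$, whose vector space at each $t$ has a canonical basis indexed by the $\eps(t)$-chain-equivalence classes in $X_{\sigma(t)}$.

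My first step would be to enumerate the discrete events along $L$: as $t$ grows, the filtered simplicial complex can only change when either (i) a vertex $x$ enters at the unique $t$ with $\sigma(t)=f_X(x)$, creating a new singleton component, or (ii) an edge $\{x,x'\}$ activates at the unique $t$ at which both $\max(f_X(x),f_X(x'))\le\sigma(t)$ and $d_X(x,x')\le\eps(t)$ first hold, potentially merging two components. Higher-dimensional simplices are irrelevant for $\hzero$, so these events account for all births and deaths of $M|_L$.

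Next I would invoke the classical one-parameter elder rule (see \cite[Chapter 7]{edelsbrunner2010computational}) using the compatible order $<$ on $X$ that was fixed in defining $\I_\X$: whenever two components merge, the one whose $<$-minimum is larger dies and the other survives. This assigns to each $x\in X$ a (possibly empty) bar $J_x\subseteq L$ starting at the $t$ where $x$ enters $X_{\sigma(t)}$ and ending at the $t$ where a $<$-older point first joins the chain class of $x$ (or $+\infty$ if no such $t$ exists); the multiset $\lmulti J_x:x\in X\rmulti$, modulo empty bars, is exactly $\barcp(M|_L)$.

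Finally, I would identify $J_x$ with $L\cap I_x$. By \eqref{eq:order-induced block}, $\sigeps\in I_x$ iff $x\in X_\sigma$ and $x=\min([x]_{\sigeps},<)$, which along $L$ is precisely the condition that, at parameter $t$, $x$ represents its chain class, i.e. $\ell(t)\in J_x$. Since $I_x$ is a staircase by Proposition \ref{prop:staircode is an interval} and $L$ has positive slope, $L\cap I_x$ is connected, so $L\cap I_x=J_x$ as subsets of $L$. The main technical obstacle is the treatment of simultaneous events along $L$ (e.g.\ a vertex entering at the same $t$ as an edge activating, or several edges activating together): here the compatible order $<$ furnishes a globally consistent tiebreaker, so that the local elder rule applied along $L$ agrees with the ambient, $\R^2$-wide definition of $[x]_{\sigeps}$ and of $I_x$.
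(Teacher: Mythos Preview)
Your proposal is correct and follows essentially the same approach as the paper: restrict the bifiltration to $L$ to obtain a one-parameter filtration, apply the one-parameter elder rule there, and identify the resulting bars with the slices $L\cap I_x$ via the defining condition that $x$ is the $<$-minimum of its block. The only cosmetic difference is that the paper phrases the middle step through the treegram $\pi_0(\K|_L)$ and its own Theorem~\ref{thm:elder rule}, whereas you invoke the elder rule of \cite{edelsbrunner2010computational} directly and describe the discrete events along $L$ by hand.
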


\begin{figure}
    \centering
    \includegraphics[width=0.7\textwidth]{}
    \caption{Left: The stack of $I_{x_i}$, $i=1,2,3,4$ from Figure \ref{fig:elder-rule-staircode} and a line $L\in \Lcal$ . Right: The barcode of $M|_{L}$. Since $L$ does not intersect $I_{x_4}$, only three intervals of $L\subset\R^2$ appear in the barcode.}
    \label{fig:sliced barcode}
\end{figure}

\begin{example}\label{ex:two barcodes}
If  \afilterMSshort{} is not injective, then there can be different \newbarcodeshort{}s w.r.t. different compatible orders. However, each of them will still be valid to produce the fibered barcodes. For example, let $(X,d_X)$ be the metric space in Figure \ref{fig:elder-rule-staircode} (A). Define $g_X:X\rightarrow \R$ by sending $x_1,x_2,x_3,x_4$ to $1,2,2,4$, respectively. Two orders $(x_1< x_2< x_3< x_4)$ and $(x_1<' x_3<' x_2<' x_4)$ are compatible with $g_X$. Consider the two \newbarcodesshort{} $\I_{\X}^<=\lmulti I_{x_i}^< :i=1,2,3,4\rmulti$ and $\I_{\X}^{<'}=\lmulti I_{x_i}^{<'}:i=1,2,3,4\rmulti$. While  $I_{x_i}^<=I_{x_i}^{<'}$ for $i=1,4$, the equality does not hold for $i=2,3$. However, both $\I_{\X}^<$ and $\I_{\X}^{<'}$ satisfy the statement in Theorem \ref{thm:fibered barcodes}. See Figure \ref{fig:comparison}.

\end{example}

\begin{figure}
    \centering
    \includegraphics[width=\textwidth]{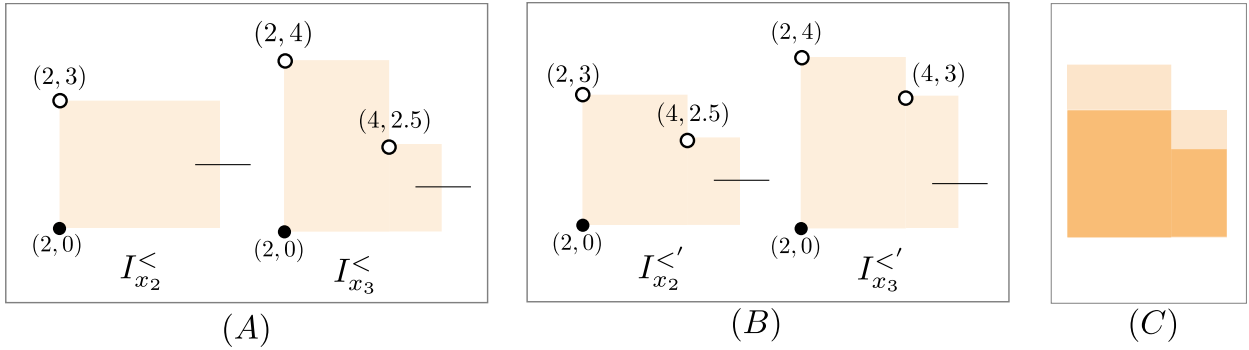}
    \caption{Illustration for Example \ref{ex:two barcodes}: (A) $I_{x_2}^<$ and $I_{x_3}^<$ . (B) $I_{x_2}^{<'}$ and $I_{x_3}^{<'}$. (C) Stack of $I_{x_2}^<$ and $I_{x_3}^<$. Stack of $I_{x_2}^{<'}$ and $I_{x_3}^{<'}$ look the same. Observe that for any $L\in \Lcal$, $\lmulti L\cap I_{x_2}^<,L\cap I_{x_3}^< \rmulti=\lmulti L\cap I_{x_2}^{<'},L\cap I_{x_3}^{<'}\rmulti$.}
    \label{fig:comparison}
\end{figure}

We will close this section with some definitions that will be useful later. 
Let $I$ be a staircase interval of $\R^2$. We define the three types of corner points as in Figure \ref{fig:corner types}. Roughly speaking, for each staircase $I_x$, type-0 is the left-bottom point; type-1 corners are those where the boundary transitions from a vertical segment to a horizontal one, while type-2 are those transitions from a horizontal one to vertical one (precise descriptions are given in Definition \ref{def:types of corners} of Appendix \ref{sec:proofs}).

Given a staircase interval $I$, for each $j=0,1,2$ we define the function $\gamma_j(I):\R^2\rightarrow \ZP$ as 
\begin{equation}\label{eq:gamma j}
    \gamma_j(I)(\ba)=\begin{cases} 1,&\mbox{$\ba$ is a $j$-th type corner point of $I$ } \\ 0,&\mbox{otherwise.}
\end{cases}
\end{equation}

\emph{Elder-rule feature functions} defined below will be useful in the later chapters.
\begin{definition}[Elder-rule feature functions]\label{def:elder rule feature functions}
Let $\X$ be \afilterMSshort{} and let $I_{\X}=\lmulti I_{x}:x\in X\rmulti$ be \anewbarcodeshort{} of $\X$. For $j=0,1,2$, we define the \emph{$j$-th elder-rule feature function} as the sum
$\gamma_j^{\X}=\sum_{x\in X} \gamma_j(I_{x}).$
\end{definition}

\begin{figure}\begin{center}
        \includegraphics[width=\textwidth]{}
    \caption{Every corner point of a staircase interval falls into three different types depending on its neighborhood information, as the pictures above illustrate. Staircase intervals in the first row are decorated by their corner points (a precise description is in Definition \ref{def:types of corners} of Appendix).}
    \end{center}
    \label{fig:corner types}
\end{figure}

\begin{remark}\label{rem:feature functions are graded Bettis}{It is not hard to check that $\gamma_j(I)$ in (\ref{eq:gamma j}) is equal to the $j$-th graded Betti number of the interval module $\R^2\rightarrow \vect$ supported by $I$ (Definition \ref{def:graded betti}). Thus $\gamma_j^{\X}=\sum_{x\in X}\beta_j^{I^{I_x}}$.}
\end{remark}

%%%%%%%%%%%%%%%%%%%%%%%%%%%%%%%%%%%%%%%%%%%%%%%%%%%%%%%%%%%%%%%%%%%%%%%%%%%%%%%%%%%%%%%%%%%%%%%%%%%%%%%%%%%%%%%%%%%%%%%%%%%%%%%%%%%%%%%%%%%%

\section{Decorated \newbarcodes{} and treegrams}\label{sec:decorated staircodes and treegrams}
In Section \ref{sec:treegrams} we prove Theorem \ref{thm:fibered barcodes} and introduce \emph{bipersistence treegrams} to encode multi-scale clustering information of  \filterMSshort{}s. In Section \ref{sec:decorated staircode} we show that an ``enriched'' \newbarcodeshort{} of \afilterMSshort{} $\X$ can recover the so-called \emph{fibered treegram}  of $\X$, i.e. 1-dimensional slices of the aforementioned bipersistence treegram. Also, we identify a sufficient condition on $\X$ for its \newbarcodeshort{} to be the barcode of the 2-parameter persistence module $\hzero(\ripsbi_{\bullet}(\X))$. {In Section \ref{sec:strong elder rule} we show that if $\hzero(\ripsbi_{\bullet}(\X))$ is interval decomposable, then its barcode is equal to the \newbarcodeshort{} of $\X$. Also, we stratify the collection of \filterMSshort{}s $\X$ according to the complexity of the  indecomposable summands of $\hzero(\ripsbi_{\bullet}(\X))$.}

\subsection{Bipersistence treegrams}\label{sec:treegrams}

\paragraph{Partitions and sub-partitions.} Let $X$ be a non-empty finite set.   We will call any partition $P$ of a subset $X'$ of $X$ a \emph{sub-partition} of $X$. In this case we call $X'$ the \emph{underlying set of $P$.}   
A partition of the empty set is defined as the empty set. By $\subpart(X)$, we denote the set of \emph{all sub-partitions of $X$}, i.e.	$\subpart(X):=\left\{P:  \exists X' \subseteq X\ \mbox{, $P$ is a partition of $X'$}\right\}.$ We refer to elements of a sub-partition of $X$ as \emph{blocks}.

Let $P,Q\in\subpart(X)$. By $P\leq Q$, we mean $P$ refines $Q$, i.e. for all $B\in P$, there exists $C\in Q$ such that $B\subseteq C$.
For example, let $X=\{x_1,x_2,x_3\}$ and consider the sub-partitions $P:=\{\{x_1\},\{x_2\}\}$ and $Q:=\{\{x_1,x_2\},\{x_3\}\}$ of $X$. Then, it is easy to see that $P\leq Q$. 

\emph{Treegrams} are a generalized notion of dendrograms \cite{smith2016hierarchical}, which are useful for visualizing the evolution of clustering information of 1-parameter simplicial filtrations:

\begin{definition}[Treegrams {\cite{smith2016hierarchical}}]\label{def:treegram} A \emph{treegram} over a finite set $X$ is any function $\theta_X:\R\rightarrow \subpart(X)
	$ such that the following properties hold:  (1) if $t_1\leq t_2$, then \hbox{$\theta_X(t_1) \leq \theta_X(t_2)$}, (2)  there exists $T>0$ such that $\theta_X(t)=\{X\}$ for $t\geq T$ and $\theta_X(t)$ is empty for $t\leq -T$, and (3) for all $t$ there exists $\epsilon>0$ s.t. $\theta_X(s)=\theta_X(t)$ for $s\in [t,t+\epsilon]$. See Figure \ref{fig:treegram}  for an example. Also, even when the domain $\R$ is replaced by any totally ordered set $L$ isomorphic to $\R$, $\theta_X$ is said to be a (1-parameter) treegram.
 \end{definition}

\begin{figure}
 	\begin{center} 		
 		\includegraphics[width=0.5\textwidth]{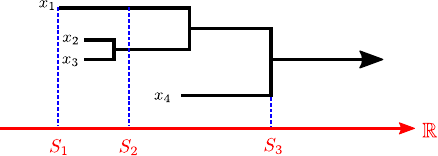}
 	\end{center}
 \caption{\label{fig:treegram} A (1D) treegram $\theta_X$ over the  set $X:=\{x_1,x_2,x_3,x_4\}$. Notice that $\theta_X(t)=\emptyset$ for $t\in(-\infty,S_1)$. Also,  $\theta_X(S_1)=\{\{x_1\}\}$, $\theta_X(S_2)=\{\{x_1\},\{x_2,x_3\}\}$, and $\theta_X(t)=\{X\}$ for all $t\in [S_3,\infty).$ }
 \end{figure}
 
Given a simplicial complex $K$, let $K^{(0)}$ be the vertex set of $K$. Let $\pi_0(K)$ be the partition of the vertex set $K^{(0)}$ according to the connected components of $K$. 
A functor $\K:\Pb\rightarrow \simp$ is said to be a \emph{filtration of $K$} if $\K(\ba)\subseteq K$ for all $\ba\in \Pb$,  every internal map is an inclusion, and there exists $\ba_0\in \Pb$ such that for all $\ba\in \Pb$ with $\ba_0\leq \ba$, $\K(\ba)=K$.

\begin{remark}[Treegrams induced by simplicial filtrations]\label{rem:treegrams arise from filtrations}  Let $K$ be a simplicial complex on the vertex set $X=\{x_1,x_2,\ldots,x_n\}$ and let $\K:\R\rightarrow \simp$ be a filtration of $K$. 
Assume that $K$ consists solely of one connected component, i.e. $\pi_0(K)=\{X\}$.  Then, the function $\pi_0(\K):\R\rightarrow \subpart(X)$ defined as $\eps\mapsto \pi_0(\K(\eps))$ is a treegram over $X$. 
\end{remark}

\paragraph{The zeroth elder rule for a 1-parameter filtration.} Let $\theta_X$ be a treegram over $X$. 
We define the \emph{birth time} of $x$ as $b(x):=\min\{\eps\in \R: \mbox{$x$ is in the underlying set of $\theta_X(\eps)$}\}
$
(by Definition \ref{def:treegram} (2) and (3), every $x\in X$ has the birth time $b(x)$).
Pick any order $<$ on $X$ such that $b(x)<b(x')$ implies $x<x'$ for all $x,x'\in X$.\footnote{This order $<$ is uniquely specified if all $x\in X$ have different birth times.} 
For $\eps\in [b(x),\infty)$, we denote the block to which $x$ belong in the sub-partition $\theta_X(\eps)$ by $[x]_\eps$. 
We define the \emph{death time} of $x$ as $d^<(x)=\sup\{\eps\in [b(x),\infty]: \mbox{$x=\min([x]_\eps,<)$}\}.$
As long as $<$ is compatible with the birth times, the \emph{elder-rule-barcode} is uniquely defined (which will be proved in Appendix): 

\begin{definition}[Elder-rule-barcode of a treegram]\label{def:elder rule barcode} Let $\theta_X:\R\rightarrow \subpart(X)$ be a treegram over $X$. For any order $<$ on $X$ compatible with the birth times, let $J_x:=\left[b(x),{d^{<}}(x)\right)$. The \elder{} of $\theta_X$ is defined as the multiset $\barc(\theta_X):=\lmulti J_x:x\in X \rmulti$.
\end{definition}
 
For the {\bf 1-parameter case}, the elder-rule-barcode of a treegram can be obtained by dismantling the treegram into linear pieces w.r.t. the elder rule -- see the theorem below. Even though this result is well-known (e.g, \cite{curry2018fiber}), we include a proof at the end of this section.

\begin{theorem}[Compatibility between the elder rule and algebraic decomposition]\label{thm:elder rule} 
Let $\K$ and $\theta_X$ be the filtration and the treegram in Remark \ref{rem:treegrams arise from filtrations}, respectively. Let $\barcp(\theta_X)=\lmulti J_{x}: x\in X \rmulti$ be the \elder{} of $\theta_X$.
Then, $\hzero(\K)\cong \bigoplus_{x\in X} \I^{J_{x}}$ (see Figure \ref{fig:elder rule introduction}).
\end{theorem}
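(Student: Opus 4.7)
The plan is to invoke the structure theorem for pointwise finite-dimensional persistence modules over $\R$: two such modules are isomorphic if and only if their rank invariants $(\eps_1,\eps_2)\mapsto \rank(M(\eps_1)\to M(\eps_2))$ coincide. I would apply this to $M:=\hzero(\K)$ and $N := \bigoplus_{x\in X} I^{J_{x}}$. Both are pointwise finite dimensional (since $\K$ is a filtration of the finite complex $K$ and $X$ is finite), so the strategy applies.

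First I would match pointwise dimensions at a given $\eps\in\R$: observe that $\dim \hzero(\K(\eps)) = |\pi_0(\K(\eps))| = |\theta_X(\eps)|$, while $\dim N(\eps) = |\{x\in X : \eps \in J_x\}|$; the latter equals $|\theta_X(\eps)|$ because each block of $\theta_X(\eps)$ has a unique $<$-minimum and $\eps\in J_x$ precisely when $x$ is that minimum. Next, to match ranks at $\eps_1\le \eps_2$: the rank of $\hzero(\K(\eps_1))\to \hzero(\K(\eps_2))$ equals the number of blocks $C\in \theta_X(\eps_2)$ meeting the vertex set $\K(\eps_1)^{(0)}$, since a basis of $\hzero(\K(\eps_1))$ is indexed by blocks of $\theta_X(\eps_1)$ and two such basis elements collapse in $\hzero(\K(\eps_2))$ exactly when their blocks fall inside a common block at time $\eps_2$; meanwhile, the rank of $N(\eps_1)\to N(\eps_2)$ is $|\{x : b(x)\le \eps_1 \text{ and } d^<(x) > \eps_2\}|$. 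I would exhibit an explicit bijection between these two index sets: send a block $C\in\theta_X(\eps_2)$ meeting $\K(\eps_1)^{(0)}$ to its oldest element $x_C:=\min(C,<)$; compatibility of $<$ with birth times forces $b(x_C)\le \eps_1$, and the fact that $[x_C]_\eps\subseteq C$ grows monotonically with $\eps$ gives that $x_C$ remains oldest throughout $[\eps_1,\eps_2]$, hence $d^<(x_C) > \eps_2$. The inverse sends $x$ to $[x]_{\eps_2}$, and both directions are straightforward to verify.

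The main technical obstacle is careful handling of the half-open nature of $J_x = [b(x), d^<(x))$ at the boundaries $\eps_1,\eps_2$ (including the case $d^<(x) = \infty$ for the globally oldest element of $X$), which will require the right-constancy condition in Definition \ref{def:treegram}(3) together with the fact that $\theta_X$ has only finitely many critical values, inherited from finiteness of $K$. The rest is combinatorial bookkeeping. Once the rank invariants agree, the uniqueness of the interval decomposition for 1-parameter persistence modules yields $\hzero(\K) \cong \bigoplus_{x\in X} I^{J_x}$, as claimed.
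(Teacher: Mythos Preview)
Your proof is correct but takes a genuinely different route from the paper. The paper builds an explicit isomorphism $\free\circ\theta_X\to \bigoplus_i I^{J_{x_i}}$: at each $\eps$ it rewrites the canonical basis $\{[x_i]_\eps\}$ of $\free(\theta_X(\eps))$ as $\{[x_1]_\eps\}\cup\{[x_i]_\eps-[x_{q(i)}]_\eps\}$, where $x_{q(i)}$ is a fixed ``conqueror'' of $x_i$, and sends each nonzero such element to $1$ in the $i$-th summand; naturality and bijectivity are then checked directly. By contrast, you never construct a map: you invoke the fact that the rank invariant is a complete invariant for pointwise finite-dimensional $\R$-indexed modules, and match rank invariants via a bijection between blocks of $\theta_X(\eps_2)$ meeting $\K(\eps_1)^{(0)}$ and points $x$ with $[b(x),d^<(x))\supseteq[\eps_1,\eps_2]$.

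Each approach buys something. Yours is arguably cleaner and sidesteps the (somewhat underspecified in the paper) verification that the pointwise maps assemble into a natural transformation. The paper's explicit-basis approach, however, is what generalizes: the same change-of-basis idea, with the conqueror function now depending on the second parameter, is exactly how the paper proves the $2$-parameter Theorem~\ref{thm:elder rule2}, where no rank-invariant classification is available. So your argument is fine for this theorem in isolation, but the paper's method is chosen because it scales to the bifiltration setting.
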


\begin{figure}
    \centering
    \includegraphics[width=0.7\textwidth]{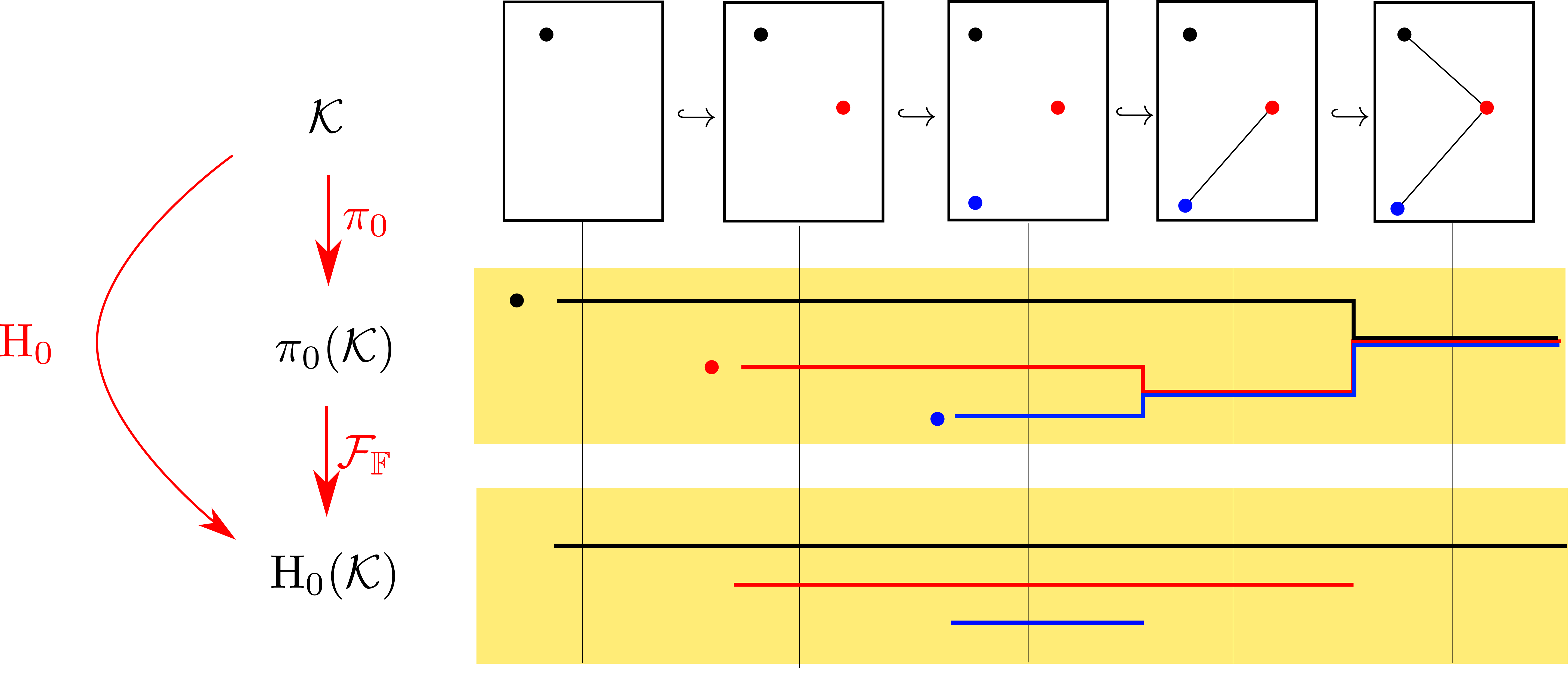}
    \caption{The first row represents a simplicial filtration $\K$. The second row stands for the the treegram $\pi_0(\K)$ which encodes the evolution of clusters in $\K$ (Remark \ref{rem:treegrams arise from filtrations}). The third row is the barcode of $\hzero(\K)$. The persistence module $\hzero(\K)$ can be obtained by applying the linearization functor (Definition \ref{def:free functor} in Appendix) to $\pi_0(\K)$. Alternatively, the barcode of $\hzero(\K)$ can also be obtained by applying the elder rule to $\pi_0(\K)$ (Definition \ref{def:elder rule barcode}).} 
    \label{fig:elder rule introduction}
\end{figure}

We are now ready to prove Theorem \ref{thm:fibered barcodes}.
\begin{proof}[Proof of Theorem \ref{thm:fibered barcodes}]\label{proof:fibered barcodes}  Fix $L\in \Lcal$. Since $L$ is isomorphic to $\R$ as a totally ordered set, $\K=\ripsbi_{\bullet}(\X)|_{L}:L\rightarrow \simp$ can be viewed as a 1-parameter filtration. Consider the treegram $\theta_X:=\pi_0(\K):L\rightarrow \subpart(X)$. By the definition of $I_x$s, it is clear that  $\lmulti L\cap I_x:x\in X \rmulti$ is the \elder{} of the treegram $\theta_X$ (Definition \ref{def:elder rule barcode}).  Hence, by Theorem \ref{thm:elder rule}, the multiset $\lmulti L\cap I_x:x\in X \rmulti$ is equal to the barcode of $\hzero\left(\K\right)$. Since $\hzero\left(\K\right)=M|_{L}$, we have $\lmulti L\cap I_x:x\in X \rmulti=\barcp(M|_{L})$.
\end{proof}

\paragraph{Bipersistence treegrams.} We now extend the notion of treegrams to encode the evolution of clusters of a 2-parameter filtration (similar ideas appear in \cite{kim2018spatio-temporal}). A \emph{bipersistence treegram} over a finite set $X$ is any function $\bitree:\R^2\rightarrow \subpart(X)$ such that  if $\ba\leq \bb$ in $\R^2$, then \hbox{$\bitree(\ba) \leq \bitree(\bb)$}.

We induce a {\bf bipersistence} treegram over $X$ from \afilterMSshort{} $\X$.

\begin{definition}[Rips bipersistence treegrams]\label{def:Rips bipersistence treegrams} Let $\XX$ be \afilterMSshort. We define $\bitreex:\R^2\rightarrow \subpart(X)$ as $\sigeps \mapsto \pi_0\left(\rips_{\eps}(X_{\sigma},d_{X})\right)$. This $\bitreex$ is said to be the \emph{Rips bipersistence treegram} of $\X$.
\end{definition}

Observe that $x\in X$ belongs to the underlying set of $\bitreex(\ba)$ if and only if $(f_X(x),0)\leq \ba$, i.e. $(f_X(x),0)$ is the \emph{birth grade} of $x$ in $\bitreex$. Assume that $\X$ is injective. Then the birth grades of elements in $X$ is totally ordered.  Note that the \newbarcodeshort{} of $\X$ can be extracted from $\bitreex$: Indeed, $I_x$ in equation (\ref{eq:barcode}) can be rephrased as 
$I_x=\{\sigeps\in \R^2:$ $x$ is in the underlying set of $\bitreex \sigeps$ and $x$ has the smallest birth grade in its block of $\bitreex \sigeps\}.$ See Figure \ref{fig:elder-rule-staircode1}.

\begin{figure}
    \centering
    \includegraphics[width=0.7\textwidth]{}
    \caption{Consider the \filterMSshort{} $\X$ defined in Figure \ref{fig:elder-rule-staircode}. Figure (A) and (C) above are identical to Figure \ref{fig:elder-rule-staircode} (A) and (B), respectively. (B) The Rips bipersistence treegram of $\X$ (Definition \ref{def:Rips bipersistence treegrams}). The summarization processes (A)$\rightarrow$(B)$\rightarrow$(C) are analogous to the processes depicted in Figure \ref{fig:elder rule introduction}. Figures are best viewed in color.}    \label{fig:elder-rule-staircode1}
\end{figure}

\begin{definition}[Fibered treegrams]\label{def:fibered treegrams} Let $\bitreex$ be a Rips bipersistence treegram of \afilterMSshort{} $\X$. The \emph{fibered treegram} of $\bitreex$ refers to the collection $\{\bitreex|_{L}\}_{L\in \Lcal}$ of treegrams obtained by restricting $\bitreex$ to positive-slope lines (see Figure \ref{fig:fibered treegram} for an example).
\end{definition}

\begin{figure}
    \centering
    \includegraphics[width=0.8\textwidth]{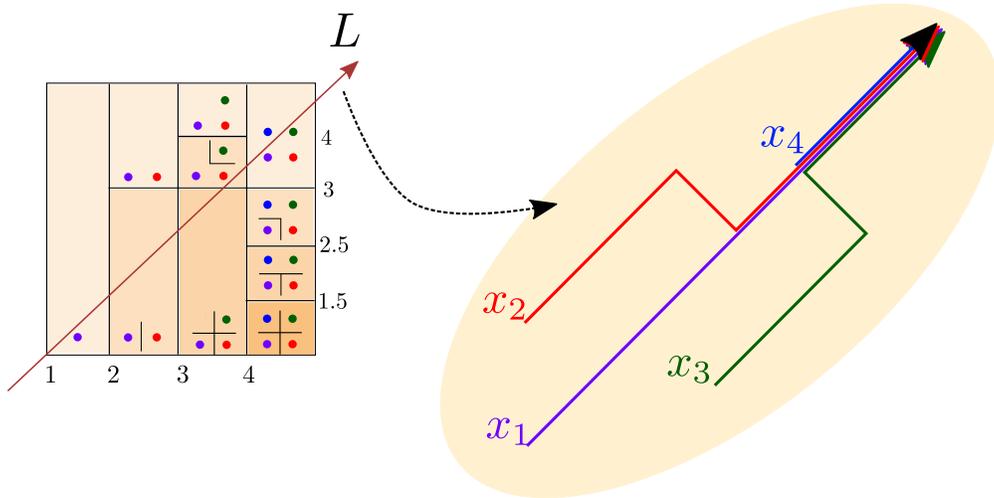}
    \caption{Consider the bipersistence treegram in Figure \ref{fig:elder-rule-staircode1} (B) and pick a line $L$ of positive slope. Then, we obtain a treegram over $L$.}
    \label{fig:fibered treegram}
\end{figure}

\paragraph{A combinatorial analogue of Theorem \ref{thm:betti formula}.}
Recall the elder-rule feature functions of \afilterMSshort{} $\X$ (Definition \ref{def:elder rule feature functions}). We will show that they can be used to retrieve the \emph{cardinality function} of $\bitreex$. 

\begin{definition}[Cardinality function] Let $\bitree$ be a bipersistence treegram over a set $X$. We call the function $\abs{\bitree}:\R^2\rightarrow \ZP$ defined as $\ba\mapsto \abs{\bitree(\ba)}$, the \emph{cardinality function} of $\bitree$.
\end{definition}

For $A\subseteq \R^2$ we define the indicator function $\one_A:\R^2\rightarrow \ZP$ of $A$ as \[\one_{A}(\ba):=\begin{cases}1,&\ba\in A, \\ 0,&\mbox{otherwise.}
\end{cases}\]

 The following proposition directly follows \cite[Proposition 32]{dey2018computing}:

\begin{proposition}\label{prop:cheng} Let $I$ be a staircase interval. Then,  
$\one_{I}(\ba)=\sum_{\bx\leq \ba}\sum_{j=0}^2(-1)^{j}\gamma_j(I)(\bx).$
\end{proposition}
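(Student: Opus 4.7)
The plan is to reduce Proposition \ref{prop:cheng} to Theorem \ref{thm:betti formula} applied to the interval module $I^I:\R^2\to\vect$ supported on the staircase $I$. First I would observe the tautological identity
\[\hf(I^I)(\ba) \;=\; \dim I^I(\ba) \;=\; \one_I(\ba) \quad\text{for every }\ba\in\R^2.\]
Second, by Remark \ref{rem:feature functions are graded Bettis} the feature functions $\gamma_0(I),\gamma_1(I),\gamma_2(I)$ coincide with the graded Betti numbers $\beta_0^{I^I},\beta_1^{I^I},\beta_2^{I^I}$, while by Remark \ref{rem:graded betti numbers}(ii) all higher Betti numbers vanish. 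Plugging these two identifications into Theorem \ref{thm:betti formula} with $d=2$ gives exactly
\[\one_I(\ba) \;=\; \sum_{\bx\leq\ba}\sum_{j=0}^{2}(-1)^j\beta_j^{I^I}(\bx) \;=\; \sum_{\bx\leq\ba}\sum_{j=0}^{2}(-1)^j\gamma_j(I)(\bx).\]
Since $I$ has only finitely many corner points, one may coarsen to a finite subgrid of $\Z^2$ in order to apply the $\Z^d$-version of Theorem \ref{thm:betti formula}, and the resulting identity transports back to $\R^2$ unchanged.

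The substance of the argument — and the main obstacle — therefore lies entirely in justifying Remark \ref{rem:feature functions are graded Bettis}. I would establish it by writing down an explicit minimal free resolution of $I^I$. The unique type-$0$ corner $(\sigma_0,\eps_0)$ guaranteed by Definition \ref{def:staircase} is the single generator, so $F^0 = Q^{(\sigma_0,\eps_0)}$ and $\beta_0^{I^I}=\gamma_0(I)$. The minimal relations that kill $I^I$ just outside of $I$ are generated at the type-$1$ corners (the points where the boundary of $I$ switches from a vertical to a horizontal edge), each encoding a minimal "outside" grade that sits above two in-staircase grades; this gives $F^1 = \bigoplus_{\ba\text{ type-}1}Q^\ba$ and $\beta_1^{I^I}=\gamma_1(I)$. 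The syzygies among these relations are generated, one per consecutive pair of type-$1$ corners, by their componentwise joins, which land precisely at the type-$2$ corners (the concave corner points inside $I$), yielding $F^2 = \bigoplus_{\ba\text{ type-}2}Q^\ba$ and $\beta_2^{I^I}=\gamma_2(I)$.

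Minimality of this length-$2$ complex is automatic because each differential has entries in the graded maximal ideal. Exactness is the delicate part: it reduces to the standard fact that in the two-variable polynomial ring $\F[t_1,t_2]$, the relations between any two minimal monomial generators of an ideal are generated by their least common multiple, and adjacent type-$1$ corners are precisely those whose componentwise join is a type-$2$ corner. This is the combinatorial content of \cite[Proposition 32]{dey2018computing}, which I would invoke rather than re-prove.
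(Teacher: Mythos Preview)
Your argument is correct in outline, but note that the paper does not actually prove Proposition~\ref{prop:cheng}: it simply states that the identity ``directly follows'' from \cite[Proposition~32]{dey2018computing} and moves on. So your route---recognizing $\one_I=\hf(I^I)$, invoking Remark~\ref{rem:feature functions are graded Bettis} to identify $\gamma_j(I)=\beta_j^{I^I}$, and then reading off the formula from Theorem~\ref{thm:betti formula}---is genuinely different from (and more informative than) what the paper does. What you gain is a conceptual explanation: the proposition is nothing but the Hilbert-series identity for the interval module $I^I$, together with the observation that the corner combinatorics of a staircase encode its minimal free resolution. The paper's approach, by contrast, treats the formula as a known inclusion--exclusion fact about staircase regions and imports it wholesale. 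Since you ultimately invoke the same external reference for the exactness of your resolution, both proofs rest on the same foundation; yours just unpacks more of the structure along the way.

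One small correction: by Definition~\ref{def:types of corners} the type-$2$ corners satisfy $\one_I(\ba)=0$, so they lie \emph{outside} $I$, not inside as you wrote. They are the grades at which the upsets of two consecutive type-$1$ corners first overlap (the componentwise join, as you correctly say), which is why they carry the second syzygies. This does not affect your argument, but the phrase ``concave corner points inside $I$'' should be amended.
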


The \newbarcodeshort{} and elder-rule feature functions of \afilterMSshort{} $\X$ recovers the cardinality function of $\bitreex$, which is analogous to Theorem \ref{thm:betti formula}:

\begin{theorem}\label{thm:elder rule barcode recovers the cardinality function} Let $\X$ be \afilterMSshort{} and let $I_{\X}=\lmulti I_{x}:x\in X\rmulti$ be \anewbarcodeshort{} of $\X$. For each $\ba \in \R^2$, 
\begin{align}
\abs{\bitreex(\ba)}&=
\mbox{(The number of intervals $I_{x}\in \I_{\X}$ containing $\ba$).}   \label{eq:first} \\&=\sum_{\bx\leq\ba}\sum_{j=0}^{2}(-1)^j\gamma_j^\X(\bx).\label{eq:second}
\end{align}
\end{theorem}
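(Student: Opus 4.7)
The plan is to establish the two equalities in sequence. The first equality, $\abs{\bitreex(\ba)}=\#\{x\in X:\ba\in I_x\}$, is essentially a combinatorial counting statement, while the second equality follows by summing the inclusion--exclusion formula of Proposition~\ref{prop:cheng} over all staircases in $\I_\X$.

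For the first equality, fix $\ba=\sigeps\in\R^2$ and let $<$ be the order on $X$ used to define $\I_\X$ (in the injective case this is the order induced by $f_X$; otherwise any order compatible with $f_X$). Unpacking Definition~\ref{def:elder rule2} (and its non-injective counterpart in equation~(\ref{eq:order-induced block})), $\ba\in I_x$ if and only if $x\in X_\sigma$ and $x$ is the $<$-minimum of the block $[x]_\sigeps$ of $\pi_0(\rips_\eps(X_\sigma,d_X))=\bitreex(\ba)$. The key observation is that every block $B$ of $\bitreex(\ba)$ has a unique $<$-minimum element $x_B\in B\subseteq X_\sigma$, and this element satisfies $\ba\in I_{x_B}$; conversely, if $\ba\in I_x$ then $x$ is the $<$-minimum of the block containing it. Hence the map $B\mapsto x_B$ gives a bijection between the blocks of $\bitreex(\ba)$ and the staircases of $\I_\X$ that contain $\ba$, yielding (\ref{eq:first}).

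For the second equality, Proposition~\ref{prop:cheng} applied to each staircase $I_x$ gives
\[
\one_{I_x}(\ba)=\sum_{\bx\leq\ba}\sum_{j=0}^{2}(-1)^{j}\gamma_j(I_x)(\bx).
\]
Summing over $x\in X$ and swapping the two finite sums (which is legitimate since $X$ is finite),
\[
\sum_{x\in X}\one_{I_x}(\ba)=\sum_{\bx\leq\ba}\sum_{j=0}^{2}(-1)^{j}\sum_{x\in X}\gamma_j(I_x)(\bx)=\sum_{\bx\leq\ba}\sum_{j=0}^{2}(-1)^{j}\gamma_j^\X(\bx)
\]
by Definition~\ref{def:elder rule feature functions}. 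The left-hand side is exactly the number of staircases $I_x$ containing $\ba$, which by the first equality equals $\abs{\bitreex(\ba)}$; this gives (\ref{eq:second}).

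The only delicate point is the bijection in the first step: one must check that $[x]_\sigeps$ is indeed a well-defined block of $\bitreex(\ba)$, namely that two points $x,x'\in X_\sigma$ lie in the same block of $\pi_0(\rips_\eps(X_\sigma,d_X))$ if and only if they are connected by an $\eps$-chain in $X_\sigma$. This is immediate from the definition of the Rips complex, since an edge $\{x,x'\}$ is present precisely when $d_X(x,x')\leq\eps$ and path-connectedness in a $1$-skeleton coincides with connectedness of the full complex. With this observation in hand, the proof is completed by the two steps above; I expect no further subtleties.
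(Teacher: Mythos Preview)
Your proposal is correct and follows essentially the same approach as the paper: the first equality is established via the bijection between blocks of $\bitreex(\ba)$ and their $<$-minimal elements, and the second by summing Proposition~\ref{prop:cheng} over $x\in X$ and invoking Definition~\ref{def:elder rule feature functions}. Your write-up is in fact somewhat more careful than the paper's (which simply assumes injectivity and appeals directly to equation~(\ref{eq:barcode})); the only minor imprecision is the phrase ``swapping the two finite sums''---the sum over $\bx\leq\ba$ is infinite in $\R^2$, but it has finite support since each staircase has only finitely many corners, so the interchange is still valid.
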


 \begin{proof}
 \label{proof:elder rule barcode recovers the cardinality function}For simplicity we assume the injectivity of $\X$. We prove the equality in (\ref{eq:first}). Let $\sigeps\in \R^2$. Since each block in $\bitreex\sigeps$ contains its unique oldest element,  $\abs{\bitreex\sigeps}$ is equal to the cardinality of the set 
\[A\sigeps:=\{x\in X_{\sigma}:x\ \mbox{is the oldest in the block containing $x$ in $\theta_X\sigeps$}\}.\]
By equation (\ref{eq:barcode}), $\ba$ belongs to $I_{x_i}$ if and only if $x_i \in A\sigeps$, implying the equality \[\abs{A\sigeps}=\mbox{(The number of intervals $I_{x_i}\in \I_{\X}$ containing $\sigeps$),}\] 
as desired. The equality in (\ref{eq:second}) directly follows from Proposition \ref{prop:cheng} and Definition \ref{def:elder rule feature functions}.
 \end{proof}

%%%%%%%%%%%%%%%%%%%%%%%%%%%%%%%%%%%%%%%%%%%%%%%%%%%%%%%%%%%%%%%%%%%%%%%%%%%%%%%%%%%%%%%%%%%%%%%%%%%%%%%%%%%%%%%%%%%%%%%%%%%%%%%%%%%%%%%%%%%%%%%%%%%%%%%%%%%%%%%%%%%%%%%%%%%%%
\subsection{\Newbarcodes{} and fibered treegrams}\label{sec:decorated staircode}

In this section we identify a sufficient condition on \afilterMSshort{} $\X$ for its \newbarcodeshort{} to coincide with the barcode of the 2-parameter persistence module $\hzero(\ripsbi_{\bullet}(\X))$ (Theorem \ref{thm:elder rule2}). Also, in general, all fibered treegrams can be recovered from \newbarcodeshort{}s (Theorem \ref{thm:fibered treegrams}). 

Let $(X,d_X)$ be a  metric space and fix $x,x'\in X$. Recall that an \emph{$\eps$-chain} between $x$ and $x'$ is a finite sequence
$x=x_1,x_2,\ldots,x_\ell=x'$ in $X$ where each consecutive pair $x_i,x_{i+1}$ is within distance $\eps$. Define (in fact an ultrametric) $u_X:X\times X\rightarrow \RP$ as
\begin{equation}\label{eq:ultrametric}
    u_X(x,x'):=\min\{\eps\in[0,\infty): \mbox{there exists an $\eps$-chain between $x$ and $x'$}\} \ \mbox{(see \cite{carlsson2010characterization})}.
\end{equation}

For a metric space $(X,d_X)$ and pick any total order $<$ on $X$. Let $x\in X$ be a non-minimal element of $(X,<)$. A $<$-\emph{conqueror} of $x$ is an element $x'\in X$ such that (1) $x'<x$, and (2) for any $x''\in X$ with $x''<x$, it holds that $u_X(x,x')\leq u_X(x,x'')$.

Now consider an \filterMSshort{} $\XX$. A $<$-\emph{conqueror} function $c_x:[f_X(x),\infty)\rightarrow X$ of a non-minimal $x\in X$ sends $\sigma\in[f_X(x),\infty)$ to a conqueror of $x$ in $(X_{\sigma},d_X)$. For the minimum $x'\in (X,<)$, define $c_{x'}:[f_X(x'),\infty)\rightarrow X$ to be the constant function at $x'$. 

We generalize Theorem \ref{thm:elder rule} and at the same time strengthen Theorem \ref{thm:fibered barcodes} for 2-parameter persistence modules induced by a special type of \filterMSsshort:

\begin{theorem}[Compatibility between the \newbarcodeshort{}s and algebraic decomposition]\label{thm:elder rule2}Let $\XX$ be \afilterMSshort{} and fix any order $<$ on $X$ compatible with $f_X$. 
Assume that there exists a \emph{constant} $<$-conqueror function for \emph{each} $x\in X$.\footnote{Observe that if this property holds for the order $<$, then the same property holds for any other order  $<'$ that is compatible with $f_X$, and $\I_\X^{<}=\I_{\X}^{<'}$. } Then, $\hzero\left(\ripsbi_{\bullet}(\X)\right)$ is interval decomposable and its barcode coincides with the \newbarcodeshort{} $\I_{\X}^<$.
\end{theorem}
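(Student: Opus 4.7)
The plan is to construct an explicit natural isomorphism
\[
\Phi:\ \bigoplus_{x\in X} I^{I_x^<}\ \xrightarrow{\ \cong\ }\ \hzero(\ripsbi_\bullet(\X)),
\]
from which both conclusions (interval decomposability and equality of barcodes) follow at once by Azumaya--Krull--Remak--Schmidt uniqueness. Let $x_1$ denote the $<$-minimum of $X$ (it has the smallest $f_X$-value because $<$ is compatible with $f_X$), and for each $x\neq x_1$ fix a constant $<$-conqueror $c(x)\in X$ supplied by the hypothesis. If $(\sigma,\eps)\in I_x^<$ then $x\in X_\sigma$, and since $f_X(c(x))\leq f_X(x)\leq\sigma$ we also have $c(x)\in X_\sigma$. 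At each grade $(\sigma,\eps)$, denote by $e_x$ the basis element of the $x$-summand of the direct sum (present exactly when $(\sigma,\eps)\in I_x^<$), and set
\[
\Phi_{(\sigma,\eps)}(e_{x}):=\begin{cases} [x_1], & x=x_1,\\ [x]-[c(x)], & x\neq x_1,\end{cases}
\]
where $[y]$ denotes the class of $y\in X_\sigma$ in $\hzero(\rips_\eps(X_\sigma,d_X))$.

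\textbf{Step 1: each $\Phi_{(\sigma,\eps)}$ is an isomorphism.} Enumerate $B_{(\sigma,\eps)}:=\{x:(\sigma,\eps)\in I_x^<\}$ in $<$-increasing order as $x_{i_1}<x_{i_2}<\cdots<x_{i_k}$. By (\ref{eq:order-induced block}) these are exactly the $<$-minima of the blocks of $\pi_0(\rips_\eps(X_\sigma,d_X))$, so $\{[x_{i_j}]\}_{j=1}^k$ is a basis of $\hzero(\rips_\eps(X_\sigma,d_X))$; moreover $x_{i_1}=x_1$ whenever $X_\sigma\neq\emptyset$. For $j\geq 2$, the element $c(x_{i_j})$ lies in a block \emph{different} from that of $x_{i_j}$ (otherwise $x_{i_j}$ would not be its block's minimum), and so $[c(x_{i_j})]=[x_{i_{j'}}]$ where $x_{i_{j'}}\in B_{(\sigma,\eps)}$ is the minimum of that other block; since $x_{i_{j'}}\leq c(x_{i_j})<x_{i_j}$, we have $j'<j$. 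Hence the matrix of $\Phi_{(\sigma,\eps)}$ with respect to the bases $\{e_{x_{i_j}}\}$ and $\{[x_{i_j}]\}$ is upper unitriangular, and $\Phi_{(\sigma,\eps)}$ is invertible.

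\textbf{Step 2: naturality.} Fix $(\sigma,\eps)\leq(\sigma',\eps')$ and $x$ with $(\sigma,\eps)\in I_x^<$. If $(\sigma',\eps')\in I_x^<$, the direct-sum map sends $e_x$ to $e_x$ and the $\hzero$-structure map sends $[x]-[c(x)]$ (or $[x_1]$) to itself, so the square commutes. The essential case is $(\sigma',\eps')\notin I_x^<$; this forces $x\neq x_1$ since $I_{x_1}^<=[f_X(x_1),\infty)\times[0,\infty)$ by the global-min property. Here $x$ fails to be a block-minimum at $(\sigma',\eps')$, so some $y<x$ in $X_{\sigma'}$ satisfies $u_{X_{\sigma'}}(x,y)\leq\eps'$. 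The constant-conqueror property applied at level $\sigma'$ yields $u_{X_{\sigma'}}(x,c(x))\leq u_{X_{\sigma'}}(x,y)\leq\eps'$, placing $c(x)$ in the same $\eps'$-connected component of $X_{\sigma'}$ as $x$. Therefore $[x]=[c(x)]$ in $\hzero(\rips_{\eps'}(X_{\sigma'},d_X))$, so $\Phi_{(\sigma,\eps)}(e_x)=[x]-[c(x)]$ is carried to $0$, matching the zero image on the direct-sum side.

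The main obstacle is the naturality check in Step 2: it is precisely here that the constant-conqueror hypothesis is indispensable, ensuring that whenever $x$ first merges with \emph{any} strictly older element in $(X_{\sigma'},d_X)$ at scale $\eps'$, the fixed element $c(x)$ is itself swept into $x$'s new block. Without constancy across $\sigma$, the identification $[c(x)]=[x]$ required to annihilate the summand $I^{I_x^<}$ outside its support could fail, obstructing the decomposition along the staircases $I_x^<$.
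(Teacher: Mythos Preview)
Your proof is correct and follows essentially the same approach as the paper: both construct an explicit isomorphism between $\bigoplus_{x\in X} I^{I_x^<}$ and $\hzero(\ripsbi_\bullet(\X))$ by pairing the interval generator $e_x$ with the difference class $[x]-[c(x)]$ (and $e_{x_1}$ with $[x_1]$). The only cosmetic differences are that the paper builds the map in the opposite direction $M\to N$ (defining it on the basis $\{[x_1]\}\cup\{[x_i]-[x_{q(i)}]\neq 0\}$ of each $M(\sigma,\eps)$) and leaves the verification that the resulting family is a natural isomorphism to the reader, whereas you spell out both the unitriangular bijectivity argument and the naturality check---in particular making explicit exactly where the constant-conqueror hypothesis is consumed.
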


The proof of Theorem \ref{thm:elder rule2} is similar to that of Theorem \ref{thm:elder rule}. The both proofs are given at the end of this section. Consider the \filterMSshort{} $\X$ in Figure \ref{fig:elder-rule-staircode}. Observe that $\X$ satisfies the assumption in Theorem \ref{thm:elder rule2}. Therefore, $\hzero\left(\ripsbi_{\bullet}(\X) \right)$ is interval decomposable. 
There exists a class of \filterMSshort{}s to which Theorem \ref{thm:elder rule2} applies as shown by the following corollary.
\begin{corollary}\label{cor:ultrametric}Let $\XX$ be any \filterMSshort{} where $d_X$ is an ultrametric, i.e. $d_X(x,x'')\leq\max\left(d_X(x,x'),d_X(x',x'')\right)$ for all $x,x',x''\in X$. Then, $\hzero\left(\ripsbi_{\bullet}(\X)\right)$ is interval decomposable (in fact, its barcode consists solely of infinite rectangular intervals).
\end{corollary}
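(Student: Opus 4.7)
The plan is to verify the hypothesis of Theorem \ref{thm:elder rule2} under the ultrametric assumption, from which interval decomposability of $\hzero(\ripsbi_\bullet(\X))$ follows at once. The key observation is that when $d_X$ is an ultrametric, the chain distance $u_X$ of equation (\ref{eq:ultrametric}) coincides with $d_X$ itself, both on $(X, d_X)$ and on every sublevel restriction $(X_\sigma, d_X)$. Indeed, the one-step ``chain'' $x, x'$ realizes $d_X(x, x')$, while for any chain $x = x_0, \ldots, x_\ell = x'$ iterated use of the ultrametric inequality yields $d_X(x, x') \le \max_i d_X(x_i, x_{i+1})$; since a restriction of an ultrametric remains an ultrametric, the same equality $u_{X_\sigma} = d_X$ holds on each $X_\sigma$.

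Next I would fix an order $<$ on $X$ compatible with $f_X$ and, for each non-$<$-minimal $x \in X$, pick any minimizer $x^* \in X$ of $d_X(x,\cdot)$ over $\{x' : x' < x\}$. I would then argue that the constant function $c_x(\sigma) \equiv x^*$ is a valid $<$-conqueror function on $[f_X(x), \infty)$. Compatibility of $<$ with $f_X$ implies its contrapositive $x' < x \Rightarrow f_X(x') \le f_X(x)$, so $\{x' : x' < x\} \subseteq X_\sigma$ for every $\sigma \ge f_X(x)$, making the set of older points $\sigma$-independent. Combined with $u_{X_\sigma} = d_X$, this shows that $x^*$ minimizes $u_{X_\sigma}(x, \cdot)$ over older points in $X_\sigma$, i.e., is a conqueror of $x$ in $(X_\sigma, d_X)$ for every such $\sigma$. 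For the $<$-minimum of $X$, constancy is built into the definition of the conqueror function. Theorem \ref{thm:elder rule2} then yields the interval decomposability of $\hzero(\ripsbi_\bullet(\X))$ together with the identification of its barcode with $\I_\X^<$.

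To verify that every bar is an infinite rectangle, I would compute $I_x^<$ explicitly from equation (\ref{eq:order-induced block}). Setting $e_x := \min\{d_X(x,x') : x' < x\}$ for $x$ non-$<$-minimal and $e_x := \infty$ otherwise, the identity $u_{X_\sigma} = d_X$ together with $\{x' : x' < x\} \subseteq X_\sigma$ shows that $x$ is the $<$-oldest in $[x]_{(\sigma,\eps)}$ precisely when $\sigma \ge f_X(x)$ and $\eps < e_x$. Hence $I_x^< = [f_X(x), \infty) \times [0, e_x)$, an infinite rectangular staircase interval.

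The main obstacle is more bookkeeping than substance: one must be careful with how compatibility of $<$ interacts with possible ties on level sets of $f_X$, so that the older points of $x$ remain available in every relevant $X_\sigma$. Once that is settled, the ultrametric axiom trivializes the dependence of the conqueror on $\sigma$ and Theorem \ref{thm:elder rule2} closes the argument.
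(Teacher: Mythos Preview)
Your proof is correct and follows essentially the same approach as the paper: pick an order $<$ compatible with $f_X$, choose for each non-minimal $x$ a nearest older point $x^*$ with respect to $d_X$, and observe that $x^*$ is a $<$-conqueror of $x$ in every $(X_\sigma,d_X)$, so that Theorem~\ref{thm:elder rule2} applies. The paper's proof is terser (it leaves the identity $u_{X_\sigma}=d_X$ and the $\sigma$-independence of the set of older points implicit), and it does not spell out the rectangular shape of the $I_x^<$; your explicit computation $I_x^< = [f_X(x),\infty)\times[0,e_x)$ is a welcome addition that justifies the parenthetical claim in the statement.
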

\begin{proof}Let $<$ be an order on $X$ which is compatible with $f_X$. For each non-minimal $x\in (X,<)$, pick an $x'\in X$  such that (1) $x'<x$, and (2) for any $x''\in X$ with $x''<x$, it holds that $d_X(x,x')\leq d_X(x,x'')$. Now observe that $x'$ is a $<$-conqueror in $(X_{\sigma},d_X)$ for \emph{every} $\sigma\in [f_X(x),\infty)$, completing the proof.
\end{proof}

The converse of Theorem \ref{thm:elder rule2} is false by virtue of the following example.
\begin{example}\label{ex:counter-example}Let $X:=\{x_i\}_{i=1}^8$. Consider $\XX$ where $(X,d_X)$ is depicted in Figure \ref{fig:counter-example} and $f_X(x_i)=i$ for each $i=1,\ldots,8$. Then, $\hzero(\ripsbi(\X))$ is interval decomposable even though $x_6\in X$ does not have a constant conqueror. See below for the proofs of these claims.
\end{example}

\begin{proof}[Details from Example \ref{ex:counter-example}]
The fact that $x_6$ does not have a constant conqueror can be ascertained from the following observation: For $\sigma\in [6,7)$, $x_1,x_2$ and $x_3$ are the conquerors of $x_6$ in $X_{\sigma}$. For $\sigma\in [7,8)$, $x_3,x_4$ and $x_5$ are the conquerors of $x_6$ in $X_{\sigma}$. For $\sigma\in [8,\infty)$, $x_5$ is the unique conqueror of $x_6$ in $X_{\sigma}$.

Let $\I_{\X}=\{I_{x_i}\}_{i=1}^8$ be the ER-staircode of $\X$. To prove that $M:=\hzero(\ripsbi(\X))$ is interval decomposable, it suffices to construct an isomorphism $f$ from 
$N:=\bigoplus_{i=1}^8 I^{I_{x_i}}$ to $M$. For $i=1,\ldots,8$ and for $(\sigma,\eps)\in [i,\infty)\times \R_+$, let $[x_i]_{(\sigma,\eps)}$ be the zeroth homology class of $x_i$. When confusion is unlikely, we will suppress the subscript $(\sigma,\eps)$ in $[x_i]_{(\sigma,\eps)}$.

For each $i$, consider $1_i:=1\in (I^{I_{x_i}})_{(i,0)}(=\F)$. We declare that
\begin{align*}
    1_1 &\stackrel{f_{(1,0)}}{\longmapsto} [x_1]&    1_2 &\stackrel{f_{(2,0)}}{\longmapsto} [x_2]-[x_1]\\
    1_3 &\stackrel{f_{(3,0)}}{\longmapsto} [x_3]-[x_1]&
    1_4 &\stackrel{f_{(4,0)}}{\longmapsto} [x_4]-[x_3]\\
    1_5 &\stackrel{f_{(5,0)}}{\longmapsto} [x_5]-[x_4]&
    1_6 &\stackrel{f_{(6,0)}}{\longmapsto} [x_2]-[x_1]+ [x_4]-[x_3]+ [x_6]-[x_5]\\
    1_7 &\stackrel{f_{(7,0)}}{\longmapsto} [x_7]-[x_3]&
    1_8 &\stackrel{f_{(8,0)}}{\longmapsto} [x_8]-[x_6].
\end{align*}
Since $\{1_i:i=1,\ldots,8\}$ is a set of all generators of $N$, the above specification gives rise to a unique morphism $f:N\rightarrow M$.
It is not hard to check that $f$ is actually an \emph{isomorphism}.
\end{proof}

\begin{figure}
    \centering
    \includegraphics[width=0.7\textwidth]{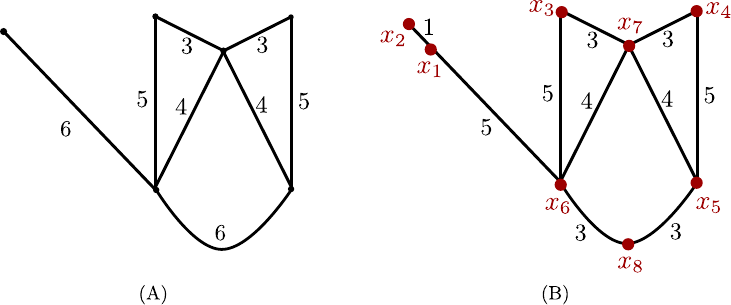}
    \caption{(A) A metric graph $G$. The distance between any two points  on $G$ is the length of a shortest path connecting them. (B) The embedding of $(X,d_X)$ in $G$.}
    \label{fig:counter-example}
\end{figure}

We enrich the \newbarcodeshort{} in order to query the fibered treegram: Let $\XX$ be \afilterMSshort. Let $<$ be any order on $X$ which is compatible with $f_X$. For each $x$, we define $I_x^\ast$ as the pair $(I_x,c_x)$ of the set $I_x$  and the $<$-conqueror function $c_x$.   
The collection $\I_{\X}^\ast:=\{I_{x}^\ast\}_{x\in X}$ is said to be the \emph{decorated \newbarcodeshort{}} of $\X$. See Figure \ref{fig:decorated staircodes}. The following result is easy to obtain with the help of decorations. 

\begin{theorem}\label{thm:fibered treegrams}Given any $L\in \Lcal$, the fibered treegram $\bitreex|_L$ can be recovered from the decorated \newbarcodeshort{} $\I_{\X}^\ast$ of the \filterMSshort{} $\XX$.
\end{theorem}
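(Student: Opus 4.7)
}
The plan is to give a procedure that, from the decorated \newbarcodeshort{} $\I_\X^{\ast}$, recovers $\bitreex|_L(t)$ at every parameter $t$ along $L$. Parameterize $L$ by $t\in\R$ and write $L(t)=(\sigma(t),\eps(t))$. For $y\in X_{\sigma(t)}$, let $\rho(y,t)$ denote the $<$-oldest element of the block of $\bitreex|_L(t)$ containing $y$; then $\bitreex|_L(t)$ is precisely the collection of fibers of $\rho(\cdot,t)$, so it suffices to recover $\rho$ from $\I_\X^{\ast}$.

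By (the proof of) Theorem~\ref{thm:elder rule barcode recovers the cardinality function}, the set $R(t):=\{y\in X:t\in L\cap I_y\}$ is exactly the set of $<$-oldest representatives of the blocks of $\bitreex|_L(t)$, so $\I_\X$ alone already identifies the ``roots'' of the sub-partition; the conqueror decoration will serve to attach each non-root to its root. For each $y\in X$, set $t_d(y):=\sup(L\cap I_y)$ if $L\cap I_y\neq\emptyset$, and otherwise let $t_d(y)$ be the unique $t$ with $\sigma(t)=f_X(y)$. Define $\rho$ by induction on $(X,<)$: for the $<$-minimum $y_0$ set $\rho(y_0,t):=y_0$ whenever $\sigma(t)\ge f_X(y_0)$; for any other $y$, set $\rho(y,t):=y$ if $t\in L\cap I_y$, and $\rho(y,t):=\rho(c_y(\sigma(t_d(y))),t)$ if $t\ge t_d(y)$. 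Because each recursive call is to a strictly $<$-older element and $X$ is finite, $\rho$ is well-defined and computable directly from $\I_\X^{\ast}$.

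Correctness is by induction on $(X,<)$; the only non-routine step is to show, for $t\ge t_d(y)$, that $y$ lies in the same block of $\bitreex|_L(t)$ as $c_y(\sigma(t_d(y)))$. At any such $t$, $L(t)$ lies outside $I_y$, so $y$ is not $<$-oldest in its block, meaning that some $x'<y$ with $x'\in X_{\sigma(t)}$ is connected to $y$ by an $\eps(t)$-chain in $(X_{\sigma(t)},d_X)$. By the defining property of the conqueror, the ultrametric distance in $(X_{\sigma(t)},d_X)$ from $y$ to $c_y(\sigma(t))$ is at most that from $y$ to any such $x'$, hence at most $\eps(t)$; thus $c_y(\sigma(t))$ lies in $y$'s block as well. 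Since $X$ is finite, $c_y$ is piecewise constant on $[f_X(y),\infty)$, so $c_y(\sigma(t))=c_y(\sigma(t_d(y)))$ on a right-neighborhood of $t_d(y)$, and monotonicity of $\bitreex|_L$ (blocks only merge as $t$ grows) then keeps $y$ and $c_y(\sigma(t_d(y)))$ in the same block for every $t\ge t_d(y)$.

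The main obstacle will be the boundary analysis at $t=t_d(y)$: one must carefully handle whether $(\sigma(t_d(y)),\eps(t_d(y)))$ sits on a horizontal or vertical boundary segment of $I_y$, and what happens at those isolated parameters where $\sigma(t)=f_X(y')$ for some $y'\in X$, since $X_{\sigma(t)}$ itself jumps there and $c_y$ may jump with it. The ``right-neighborhood'' argument above relies on both $\sigma$ and $\eps$ being strictly increasing in $t$ along $L$, together with finiteness of $X$, to guarantee that $c_y(\sigma(\cdot))$ is constant on a small interval starting at $t_d(y)$; once this is in place the induction closes cleanly.
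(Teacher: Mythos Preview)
Your argument is correct and, in fact, supplies a proof where the paper supplies none: the paper merely asserts that the result ``is easy to obtain with the help of decorations'' and moves on. Your inductive procedure---identify the roots $R(t)$ from the undecorated staircode, then push each non-root $y$ to its conqueror $c_y(\sigma(t_d(y)))$ and recurse---is exactly the natural way to make that assertion precise.

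One simplification: the right-neighborhood step invoking piecewise-constancy of $c_y$ is not needed (and, strictly speaking, piecewise-constancy is not forced by the paper's definition of a $<$-conqueror function, which only requires that $c_y(\sigma)$ be \emph{some} conqueror in $(X_\sigma,d_X)$). You can instead argue directly at $t=t_d(y)$. Since the upper boundary of $I_y$ is open (see the description $I_y=\{(\sigma,\eps):\sigma\ge f_X(y),\ 0\le\eps<u_y(\sigma)\}$ in the proof of Proposition~\ref{prop:staircode is an interval}), the point $L(t_d(y))$ already lies outside $I_y$; hence $y$ is not $<$-oldest in its block at time $t_d(y)$, your conqueror inequality gives $u_{X_{\sigma(t_d(y))}}(y,c_y(\sigma(t_d(y))))\le\eps(t_d(y))$, and so $y$ and $c_y(\sigma(t_d(y)))$ share a block at $t_d(y)$ itself. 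Monotonicity of $\bitreex|_L$ then carries this to all $t\ge t_d(y)$, and the boundary worries in your last paragraph disappear. The same reasoning handles the case $L\cap I_y=\emptyset$: a quick check shows this forces $\eps(t_d(y))\ge u_y(f_X(y))>0$, so again $L(t_d(y))\notin I_y$ and the argument goes through unchanged.
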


\begin{figure}
    \centering
    \includegraphics[width=0.7\textwidth]{decorated_staircodes.pdf}
    \caption{Decorated intervals corresponding to the four intervals in Figure \ref{fig:elder-rule-staircode} (C). For each $i=2,3,4$, the upper boundary of $I_{x_i}$ is decorated by the conqueror of $x_i$.}
    \label{fig:decorated staircodes}
\end{figure}

\paragraph{Proofs of Theorems \ref{thm:elder rule} and \ref{thm:elder rule2}.} We first define the \emph{linearization functor}:

\begin{definition}[Linearization functor]\label{def:free functor} Let $X$ be a non-empty finite set. We define the linearization functor $\free:\subpart(X)\rightarrow \vect$ as follows. 
\begin{enumerate}[label=(\roman*)]
    \item Each $P\in \subpart(X)$ is sent to the vector space $\free(B)$ which consists of formal linear combinations of elements of $P$  over the field $\F$. In other words, \[\free(P)=\left\{\displaystyle \sum_{B\in P}c_{B}B: \ c_B\in \F\right\}.\] 
    By identifying each $B\in P$ with $1\cdot B\in \free(P)$, the sub-partition $P$ can be viewed as a basis of $\free(P)$.
    \item Each pair $P\leq Q$ in $\subpart(X)$ is sent to the linear map $\free(P)\rightarrow \free(Q)$ which sends each $1\cdot B\in \free(P)$ to $1\cdot B'\in \free(Q)$ such that $B\subseteq B'$.
\end{enumerate}
\end{definition}

The following proposition is straightforward by \cite[Theorem 7.1]{munkres1984elements}:

\begin{proposition}\label{prop:trivially isomorphic} 
\begin{enumerate}[leftmargin=*,label=(\roman*)]
    \item Let $\theta_X:\R\rightarrow\subpart(X)$ be the treegram obtained by applying $\pi_0$ to a filtration $\K:\R\rightarrow \simp$ (Remark \ref{rem:treegrams arise from filtrations}). The two 1-parameter persistence modules $\free\circ \theta_X$ and $\hzero(\K)$ are isomorphic.\label{item:trivially isomorphic1}
    \item Let $\X$ be \afilterMSshort.  The two 2-parameter persistence modules $\free\circ \bitreex$ and $\hzero\left(\ripsbi_{\bullet}(\X)\right)$ (Definitions \ref{def:Rips bifiltration} and \ref{def:Rips bipersistence treegrams}) are isomorphic.\label{item:trivially isomorphic2}
\end{enumerate}
\end{proposition}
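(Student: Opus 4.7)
The plan is to construct explicit natural isomorphisms in both parts by leveraging the well-known fact (from \cite[Theorem 7.1]{munkres1984elements}) that for any simplicial complex $K$, the zeroth simplicial homology $\hzero(K)$ is canonically isomorphic to the free $\F$-vector space on the set $\pi_0(K)$ of connected components. The proposition will then fall out as a straightforward functoriality/naturality statement.

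First I would set up the pointwise isomorphism. For any simplicial complex $K$, define a linear map
\[
\eta_K:\free(\pi_0(K))\longrightarrow \hzero(K),\qquad B\longmapsto [x_B],
\]
where $x_B$ is any vertex of the component $B\in\pi_0(K)$ and $[x_B]$ denotes its class in $\hzero(K)$. The classical result cited above guarantees that (a) $[x_B]$ is independent of the choice of $x_B\in B$, since any two vertices in the same component are joined by a $1$-chain whose boundary realizes their difference as a boundary, and (b) the resulting map $\eta_K$ is an isomorphism of vector spaces, because $\{[x_B]:B\in \pi_0(K)\}$ forms a basis of $\hzero(K)$.

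Next I would establish naturality of $\eta$ with respect to inclusions of complexes. If $K\subseteq K'$ and $B\in \pi_0(K)$ is contained in $B'\in\pi_0(K')$, then for any vertex $x_B\in B$ the inclusion sends $[x_B]_K\mapsto [x_B]_{K'}=\eta_{K'}(B')$, which is precisely the image of $B$ under the linear map $\free(\pi_0(K))\to\free(\pi_0(K'))$ from Definition \ref{def:free functor}(ii), followed by $\eta_{K'}$. Hence the obvious square commutes.

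With this in hand, part \ref{item:trivially isomorphic1} follows by assembling $\{\eta_{\K(\eps)}\}_{\eps\in\R}$ into a natural transformation $\free\circ\theta_X\Rightarrow\hzero(\K)$: at every $\eps$ the component is an isomorphism, and naturality in $\eps$ is exactly the commuting square above applied to the inclusion $\K(\eps)\hookrightarrow \K(\eps')$ for $\eps\leq \eps'$. Part \ref{item:trivially isomorphic2} is identical in structure: at each bigrade $(\sigma,\eps)\in\R^2$ we use $\eta_{\rips_{\eps}(X_{\sigma},d_X)}$, and naturality now holds with respect to the two-parameter inclusions $\rips_{\eps}(X_{\sigma},d_X)\hookrightarrow \rips_{\eps'}(X_{\sigma'},d_X)$ whenever $(\sigma,\eps)\leq (\sigma',\eps')$. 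No real obstacle arises: the content of the proposition is essentially the packaging of the standard $\pi_0$–$\hzero$ correspondence as a natural isomorphism of (multi-parameter) persistence modules, and the only nontrivial input — well-definedness and bijectivity of $\eta_K$ — is supplied directly by \cite[Theorem 7.1]{munkres1984elements}.
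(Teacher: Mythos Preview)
Your proposal is correct and follows essentially the same approach as the paper: the paper simply states that the proposition ``is straightforward by \cite[Theorem 7.1]{munkres1984elements}'' without further detail, and your argument is precisely the natural unpacking of that citation, constructing the pointwise isomorphism $\eta_K:\free(\pi_0(K))\to\hzero(K)$ and checking naturality with respect to inclusions.
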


Now we are ready to prove Theorems \ref{thm:elder rule} and \ref{thm:elder rule2}.

\begin{proof}[Proof of Theorem \ref{thm:elder rule}]\label{proof:elder rule}  Without loss of generality, let $X=\{x_1,\ldots,x_n\}$. By Proposition \ref{prop:trivially isomorphic} \ref{item:trivially isomorphic1}, $\hzero(\K)$ is isomorphic to  $M:=\free\circ\theta_X$, and thus it suffices to show that $M\cong \bigoplus_{i=1}^n I^{[b(x_i),d(x_i))}=:N$. We may assume that $b(x_1)\leq b(x_2)\leq \ldots \leq b(x_n)$. For each $i\in \{2,3,\ldots,n\}$, 
we pick a certain $x_{q(i)}$ which merges with $x_i$ earliest in the treegram $\theta_X$ among all the points in $\{x_1,x_2,\ldots,x_{i-1}\}$. This defines a function $q:\{2,3,\ldots,n\}\rightarrow \{1,2,\ldots,n\}$ (such function $q$ is not necessarily unique, since some two points $x_{j_1},x_{j_2}$ might merge with another point $x_{j_3}$ at the same time).

For $x_i\in X$ and $\eps\in[b(x_i),\infty)$, let $[x_i]_\eps$ be the block containing $x_i$ in the sub-partition $\theta_X(\eps)$ of $X$.

On the interval $(-\infty,b(x_1))$, both $M$ and $N$ are trivial and thus let $f_\eps$ be the zero map for $\eps\in (-\infty,b(x_1))$.

Fix  $\eps\in [b(x_1),\infty)$. Note that the vector space $M(\eps)$ is spanned by $\A=\{[x_i]_{\eps}\in\theta_X(\eps):b(x_i)\leq \eps\}$. Therefore, $M(\eps)$ is also spanned by $\B=\{[x_i]_{\eps}-[x_{q(i)}]_{\eps}:b(x_i)\leq \eps\}$, which is obtained by applying elementary linear operations on $\A$. Furthermore, observe that 
\[\B'=\{[x_1]_{\eps}\}\cup \left(\{[x_i]_{\eps}-[x_{q(i)}]_{\eps}:b(x_i)\leq \eps\}\setminus\{0\}\right)\] 
is a linearly independent set and in turn a basis of $M({\eps})$. Define the linear map $f_\eps:M(\eps)\rightarrow N(\eps)$ by defining it on the basis $\B'$ as follows:
\begin{enumerate}[leftmargin=*,label=(\roman*)]
    \item send $[x_1]_{\eps}$ to $1$ in the 1-st summand of $N(\eps)=\bigoplus_{i=1}^n I^{[b(x_i),d(x_i))}(\eps)$.
    \item send each basis element $[x_i]_{\eps}-[x_{q(i)}]_{\eps}(\neq 0)$ to $1$ in the $i$-th summand of 
    \[\bigoplus_{i=1}^n I^{[b(x_i),d(x_i))}(\eps).\]
\end{enumerate}
Then, one can check that the collection $f=\{f_\eps\}_{\eps\in \R}$ is an isomorphism between $M$ and  $N$, as desired.
\end{proof}

We make use of the same strategy as Theorem \ref{thm:elder rule} for proving Theorem \ref{thm:elder rule2}:
 
\begin{proof}[Proof of Theorem \ref{thm:elder rule2}]\label{proof:elder rule2}
Without loss of generality, we may assume that $X=\{x_1,\ldots,x_n\}$,  $f_X(x_1)\leq f_X(x_2)\leq \ldots \leq f_X(x_n)$, and let the order $<$ on $X$ defined as  $(x_1<x_2<\ldots<x_n)$. Also, assume that each $<$-conqueror function $c_{x_i}:\R\rightarrow X$ is constant at $q(i)\in X$ (then by definition $q(1)=x_1$). By Proposition \ref{prop:trivially isomorphic} \ref{item:trivially isomorphic2}, it suffices to show that $M:=\free\circ\bitreex$ is isomorphic to $N=\bigoplus_{i=1}^n I^{I_{x_i}^<}$.

For $x_i\in X$, and $\sigeps\in \R^2$ with $\sigeps\geq (f(x_i),0)$, let $[x_i]_{\sigeps}$ be the block containing $x_i$ in the sub-partition $\bitreex \sigeps$ of $X$.

For any $\sigeps\in \R^2$ such that $\sigeps \not\geq(f_X(x_1),0)$, both  $M\sigeps$ and $N\sigeps$ are trivial and thus let $f_{\sigeps}$ be the zero map for $\sigeps \not\geq(f_X(x_1),0)$.

Fix  $\sigeps\in \R^2$ such that $\sigeps \geq(f_X(x_1),0)$.  The vector space $M\sigeps$ is spanned by $\A=\{[x_i]_{\sigeps}\in \bitreex\sigeps: (f_X(x_i),0)\leq\sigeps \}$. Therefore, $M\sigeps$ is also spanned by $\B=\{[x_1]_{\sigeps}\}\cup \{[x_i]_{\sigeps}-[x_{q(i)}]_{\sigeps}:(f_X(x_i),0)\leq\sigeps\}$, which is obtained by applying elementary linear operations on $\A$. Furthermore, note that \[\B':=\{[x_1]_{\sigeps}\}\cup\left(\{[x_i]_{\sigeps}-[x_{q(i)}]_{\sigeps}:(f_X(x_i),0)\leq\sigeps\}\setminus\{0\}\right)\] is a linearly independent set and in turn a basis of $M\sigeps$. Let us define a linear map $f_{\sigeps}:M\sigeps\rightarrow N\sigeps$ by defining it on the basis $\B'$ as follows:
\begin{enumerate}[label=(\roman*)]
    \item send $[x_1]_{\sigeps}$ to $1$ in the 1-st summand of $N\sigeps=\bigoplus_{i=1}^n I^{I_{x_i}}\sigeps$.
    \item send each basis element $[x_i]_{\sigeps}-[x_{q(i)}]_{\sigeps}(\neq 0)$ to $1$ in the $i$-th summand of $N\sigeps=\bigoplus_{i=1}^n I^{I_{x_i}}\sigeps$.
\end{enumerate}

By invoking the construction of the $<$-conqueror functions $c_{x_i}$ and the \newbarcodeshort{} $\I_{\X}^<=\lmulti I_{x_i}^<:i=1,\ldots,n\rmulti$, one can check that the collection $f=\{f_{\sigeps}\}_{\sigeps\in \R^2}$ is an \emph{isomorphism} between $M$ and $N$, as desired.
\end{proof}

\subsection{{\Newbarcode{}s} are the only candidates for the barcodes}\label{sec:strong elder rule} 

The compatibility between the elder-rule and the algebraic decomposition theory (Theorem \ref{thm:elder rule2}) will be enhanced to Theorem \ref{thm:strong elder rule} below.  For any $(\sigma_0,\eps_0)\in \R^2$, let $U(\sigma_0,\eps_0):=\{(\sigma,\eps)\in \R^2:(\sigma_0,\eps_0)\leq (\sigma,\eps)\}$, i.e. the closed quadrant whose lower-left corner point is $(\sigma_0,\eps_0)$. 
\begin{theorem}\label{thm:strong elder rule} Let $\X$ be an injective \filterMSshort{} such that $M:=\hzero\left(\ripsbi_{\bullet}(\X)\right)$ is interval decomposable. 
Then, the barcode of $M$ coincides with the \newbarcodeshort{} $\I_\X^<$ of $\X$.  
\end{theorem}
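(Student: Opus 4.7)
The plan is to show $\barcp(M) = \I_\X^<$ as multisets, by combining Theorem~\ref{thm:fibered barcodes} with a direct analysis of where the generators of $M$ are born. Since $M$ is interval decomposable by assumption, $M \cong \bigoplus_{J \in \barcp(M)} I^{J}$. For any $L \in \Lcal$, restricting this decomposition and invoking uniqueness of the 1-parameter barcode, together with Theorem~\ref{thm:fibered barcodes}, yields the multiset identity
\[
\lmulti J \cap L : J \in \barcp(M),\ J \cap L \neq \emptyset \rmulti \;=\; \lmulti I_x \cap L : x \in X,\ I_x \cap L \neq \emptyset \rmulti.
\]
The remaining task is to bootstrap this family of 1-parameter identities into an equality of 2-parameter multisets.

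First I analyze $\beta_0^M$ directly from the Rips bifiltration. For any $\ba = (\sigma,\eps)$ with $\eps > 0$, every homology class in $M_\ba$ is represented by some vertex $x \in X_\sigma$ and so lies in the image of $\varphi_M((f_X(x),0), \ba)$ with $(f_X(x),0) < \ba$ strictly; hence $\beta_0^M(\ba) = 0$. On the line $\eps = 0$, $\dim M_{(\sigma,0)} = |X_\sigma|$ jumps by one each time $\sigma$ crosses some $f_X(x)$ (using injectivity of $f_X$), so $\beta_0^M(\ba) = 1$ if $\ba \in \{(f_X(x),0) : x \in X\}$ and $0$ otherwise. For an interval-decomposable module, $\beta_0^M(\ba)$ equals the number of intervals $J \in \barcp(M)$ having $\ba$ as a minimal element, so the minimal elements of intervals in $\barcp(M)$, counted across intervals, form exactly $\{(f_X(x),0) : x \in X\}$. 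Because these points all lie on $\eps = 0$ and are therefore pairwise comparable, no single interval can have two incomparable minima; each $J \in \barcp(M)$ has a unique minimum, and I label $\barcp(M) = \{J_x\}_{x \in X}$ so that $\min J_x = (f_X(x),0) = \min I_x$.

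The core argument is then an induction on the $f_X$-order $x_1, \ldots, x_n$ showing $J_{x_i} = I_{x_i}$. Fix any $L \in \Lcal$ through $p_i := (f_X(x_i),0)$. By Theorem~\ref{thm:elder rule barcode recovers the cardinality function}, $\dim M_{p_i} = |X_{f_X(x_i)}| = i$, so the intervals in $\barcp(M)$ containing $p_i$ are exactly $J_{x_1}, \ldots, J_{x_i}$, and likewise on the $\I_\X^<$ side. Each such interval lies in $\{\eps \geq 0\}$, and the portion of $L$ in $\{\eps \geq 0\}$ begins exactly at $p_i$, so each of these $i$ intervals has left endpoint $p_i$ when restricted to $L$, and no other summand contributes an interval on $L$ starting at $p_i$ (any $J_{x_k}$ with $k > i$ enters $L$ strictly after $p_i$). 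Extracting the sub-multiset of intervals on $L$ starting at $p_i$ from both sides of the identity above and cancelling the inductively known terms $J_{x_k} \cap L = I_{x_k} \cap L$ for $k < i$ gives $J_{x_i} \cap L = I_{x_i} \cap L$. Since both $J_{x_i}$ and $I_{x_i}$ are unions of cells of the finite grid determined by the critical values of $f_X$ and the pairwise distances in $X$, varying $L$ over all positive-slope lines through $p_i$ pins down $J_{x_i} = I_{x_i}$ cell by cell, closing the induction.

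The main obstacle will be the birth analysis: ruling out intervals in $\barcp(M)$ with multiple incomparable minima requires exploiting the specific structure of the Rips bifiltration rather than relying only on the fibered-barcode matching. A secondary subtlety is the bookkeeping in the inductive step, which depends on the observation that the portion of any positive-slope line through $p_i$ with $\eps < 0$ lies outside every summand, so intervals starting at $p_i$ on $L$ are indexed precisely by $\{k : k \leq i\}$ on both sides; without this, inductive cancellation of matched terms would not be justified.
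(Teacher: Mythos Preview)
Your argument is correct and takes a genuinely different route from the paper's proof.

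The paper's proof relies on Theorem~\ref{thm:recover graded Bettis} from Section~\ref{sec:staircodes and Bettis} to show (Claim~1) that $\barcp(M)$ consists of exactly $n$ staircase intervals with minima $(i,0)$; it then fixes an isomorphism $f:\bigoplus_k I^{J_k}\to M$, expands each generator $v_k\in M_{(k,0)}$ in the basis $\{[x_\ell]\}$, and through an analysis of the coefficients (Claims~3--5) proves $I_{x_k}\subseteq \supp(\bv_k)$, finishing by a pointwise dimension count. Your approach instead computes $\beta_0^M$ directly from the filtration (no appeal to Section~\ref{sec:staircodes and Bettis}), uses additivity of $\beta_0$ under direct sums to pin the minima of the $J_k$, and then runs an induction driven entirely by Theorem~\ref{thm:fibered barcodes}: for a positive-slope line $L$ through $p_i$, the slices $J_{x_k}\cap L$ and $I_{x_k}\cap L$ with $k\le i$ are exactly the bars on $L$ with left endpoint $p_i$, so cancelling the inductively matched terms isolates $J_{x_i}\cap L=I_{x_i}\cap L$. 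This sidesteps the explicit coefficient analysis entirely and makes the theorem logically independent of Section~\ref{sec:staircodes and Bettis}. The price you pay is the step ``both $J_{x_i}$ and $I_{x_i}$ are unions of grid cells'': this is true, since $M$ is pulled back from its $\Z^2$-discretization and Azumaya--Krull--Remak--Schmidt forces the $\R^2$-barcode to coincide with the cell-by-cell extension of the $\Z^2$-barcode, but it deserves a sentence of justification. Once that is granted, your observation that every half-open grid cell in $U(p_i)$ meets some positive-slope line through $p_i$ (because such a cell always contains a point strictly north-east of $p_i$) cleanly finishes the argument.
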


The proof utilizes results in Section \ref{sec:staircodes and Bettis} and thus is deferred to that section.

\begin{remark}\label{rem:testing interval decomposability} By Theorem \ref{thm:strong elder rule}, testing the interval decomposability of $\hzero(\ripsbi(\X))$ is equivalent to testing whether $\hzero(\ripsbi(\X))\cong N:=\bigoplus_{i=1}^nI^{I_{x_i}}$. In \cite{brooksbank2008testing} there exists a deterministic algorithm
for testing such an isomorphism.
\end{remark}

\paragraph{Stratification of the collection of \filterMS{}s} Let us consider the following collections of \filterMSshort{}s.
\begin{enumerate}[leftmargin=*, label=(\roman*)]
    \item $\aug$ is defined as the collection of all finite \filterMSshort{}s.
\end{enumerate}
The following are sub-collections of $\aug$.
\begin{enumerate}[resume,leftmargin=*, label=(\roman*)]
    \item $\ult$ consists of all finite \filterMSshort{}s $(X,d_X,f_X)$ where $d_X$ is an ultrametric.
    
    \item $\rep$ consists of all finite \filterMSshort{}s $\X$ such that the horizontal internal maps of  $\hzero\left(\ripsbi_{\bullet}(\X)\right)$ are injective.
    
    \item $\rec$ consists of all finite \filterMSshort{}s $\X$ such that $\hzero\left(\ripsbi_{\bullet}(\X)\right)$ is rectangle decomposable, i.e. each indecomposable summand is $I^{[a,b)\times [c,d)}$ for some intervals $[a,b),[c,d)$ of $\R$.
    
    \item $\eld$ consists of all finite \filterMSshort{}s $\X$ such that the assumption of Theorem \ref{thm:elder rule2} holds (and thus interval decomposable).
    
    \item $\stair$ consists of all finite \filterMSshort{}s $\X$ such that $\hzero\left(\ripsbi_{\bullet}(\X)\right)$ is interval decomposable.
\end{enumerate}
In order to clarify the relationship among these collections, we begin by recalling: 
\begin{theorem}[{\cite[Corollary 3.17]{bauer2019cotorsion}}]\label{thm:bauer} $\rep= \rec$.
\end{theorem}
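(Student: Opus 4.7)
\noindent\textit{Proof plan.} I would prove both inclusions separately, exploiting specific algebraic features of $M := \hzero\left(\ripsbi_{\bullet}(\X)\right)$ rather than general representation theory.

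For $\rec \subseteq \rep$: The key preliminary observation is that every vertical internal map of $M$ is surjective, since raising $\eps$ in $\rips_\eps$ only adds edges (and higher simplices) and can only merge existing connected components. Consequently, the $0$-th graded Betti number $\beta_0^M$ is supported on the line $\{\eps = 0\}$. Now suppose $M \cong \bigoplus_k I^{[a_k, b_k) \times [c_k, d_k)}$. Since $\beta_0^{I^R}$ is concentrated at the lower-left corner of $R$, and graded Betti numbers are additive over direct sums, every $c_k$ must equal $0$. Next, restrict $M$ to the line $\{\eps = 0\}$: we have $M(\sigma, 0) = \F[X_\sigma]$, and the horizontal maps there are the canonical inclusions $\F[X_\sigma] \hookrightarrow \F[X_{\sigma'}]$, which are manifestly injective. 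On the direct-sum side, a summand with $c_k = 0$ contributes a nontrivial kernel to the horizontal map $M(\sigma, 0) \to M(b_k, 0)$ whenever $b_k < \infty$ and $a_k \le \sigma < b_k$; injectivity at $\eps = 0$ thus forces $b_k = \infty$ for every $k$. With all rectangles of the form $[a_k, \infty) \times [0, d_k)$, horizontal injectivity of $M$ holds at every $\eps$.

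For $\rep \subseteq \rec$: Suppose all horizontal maps of $M$ are injective. First I would observe that this is equivalent to a ``restriction-stability'' property of the ultrametric $u_X$ from equation (\ref{eq:ultrametric}), namely $u_{X_\sigma}(x, x') = u_{X_{\sigma'}}(x, x')$ for all $\sigma \le \sigma'$ and $x, x' \in X_\sigma$: a failure of injectivity at some grade $(\sigma, \eps_0)$ corresponds precisely to two distinct components of $(X_\sigma, d_X)$ below scale $\eps_0$ merging in $(X_{\sigma'}, d_X)$, which is the failure of restriction-stability. Given restriction-stability, fix any compatible order $<$ on $X$, and for each non-minimal $x \in X$ choose $x_0 \in X$ achieving $\min_{x'' < x} u_X(x, x'')$; since $f_X(x_0) \le f_X(x)$ we have $x_0 \in X_\sigma$ for every $\sigma \ge f_X(x)$, and by restriction-stability $x_0$ remains a $<$-conqueror of $x$ in $(X_\sigma, d_X)$. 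Hence constant $<$-conqueror functions exist, and Theorem \ref{thm:elder rule2} yields $M \cong \bigoplus_{x \in X} I^{I_x^<}$. Finally, each staircase $I_x^<$ must be a rectangle of the form $[f_X(x), \infty) \times [0, d_x)$: if $(\sigma_0, \eps_0) \in I_x^<$ but $(\sigma_1, \eps_0) \notin I_x^<$ for some $\sigma_1 > \sigma_0$, then some older $x' < x$ satisfies $x' \in [x]_{(\sigma_1, \eps_0)} \setminus [x]_{(\sigma_0, \eps_0)}$, producing a nonzero kernel element of $M(\sigma_0, \eps_0) \to M(\sigma_1, \eps_0)$ and contradicting injectivity.

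The main obstacle I anticipate is the ``$\beta_0^M$ concentrates on $\{\eps = 0\}$'' step used in $\rec \subseteq \rep$: this is a special feature of zeroth persistent homology and has to be justified carefully via minimality of the free resolution (equivalently, via vertical surjectivity at every positive $\eps$, which in turn ensures every class at such grades already lies in the image of the downward map and so contributes nothing to $\beta_0$). Once this is established, the reduction to horizontal injectivity at $\eps = 0$ is formal. For $\rep \subseteq \rec$, the pivotal step is restriction-stability of $u_X$, which both enables the constant-conqueror construction feeding into Theorem \ref{thm:elder rule2} and upgrades staircases to rectangles.
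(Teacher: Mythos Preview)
The paper does not prove Theorem \ref{thm:bauer}; it simply quotes it from \cite[Corollary 3.17]{bauer2019cotorsion}. Your argument therefore cannot be compared to the paper's own proof, because there is none. That said, your proposal is correct and, interestingly, it is built almost entirely out of machinery that the paper itself develops.

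Your $\rep \subseteq \rec$ direction is essentially the paper's proof of $\rep \subseteq \eld$ (item (iii) in the proof of Theorem \ref{thm:stratification}) together with one extra observation. The paper shows that horizontal injectivity forces each point to have a constant $<$-conqueror by a chain of inequalities on $u_X^\sigma$; you obtain the same conclusion via the cleaner ``restriction-stability'' reformulation $u_{X_\sigma}=u_X|_{X_\sigma\times X_\sigma}$. Both feed into Theorem \ref{thm:elder rule2} to get interval decomposability. Your additional step---that horizontal injectivity forces each staircase $I_x^<$ to be an infinite rectangle---is exactly what upgrades $\eld$ to $\rec$, and your argument for it (an older $x'<x$ witnessing a step in the staircase would produce two classes in $X_{\sigma_0}$ that merge in $X_{\sigma_1}$) is correct.

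Your $\rec\subseteq\rep$ direction is also fine, though the detour through graded Betti numbers is slightly heavier than necessary. The point you need is just that in any rectangle decomposition $M\cong\bigoplus_k I^{[a_k,b_k)\times[c_k,d_k)}$ one has $c_k=0$ and $b_k=\infty$ for every $k$. You can get $c_k=0$ directly from vertical surjectivity of $M$ (a block-diagonal map is surjective only if each block is), without invoking minimality of a free resolution; and then $b_k=\infty$ follows, as you say, from the manifest injectivity of $\F[X_\sigma]\hookrightarrow\F[X_{\sigma'}]$ at $\eps=0$. The $\beta_0^M$ argument is correct but is doing more work than the conclusion requires.
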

We enrich Theorem \ref{thm:bauer} as follows:

\begin{theorem} \label{thm:stratification} $\ult \subsetneq \rep = \rec \subsetneq \eld \subsetneq \stair \subsetneq \aug.$ 
\end{theorem}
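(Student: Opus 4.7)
The plan is to establish each of the five inclusions separately and then to exhibit an explicit \filterMSshort{} witnessing each strict inclusion. The inclusion $\ult\subseteq\rep$ follows from Corollary \ref{cor:ultrametric}, which says that for ultrametric $d_X$ the module $\hzero(\ripsbi_\bullet(\X))$ is rectangle decomposable, so $\X\in\rec$, combined with $\rec=\rep$ (Theorem \ref{thm:bauer}). The inclusions $\eld\subseteq\stair$ and $\stair\subseteq\aug$ are immediate from Theorem \ref{thm:elder rule2} and the definitions, respectively.

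The main step is $\rec\subseteq\eld$. I will first prove the following stability lemma: if $\X\in\rep$, then for every $x,x'\in X_{\sigma_0}$ and every $\sigma\ge\sigma_0$ one has $u_X^\sigma(x,x')=u_X^{\sigma_0}(x,x')$, where $u_X^\sigma$ denotes the ultrametric from (\ref{eq:ultrametric}) computed in $(X_\sigma,d_X)$. Indeed, if $u_X^\sigma(x,x')$ were strictly less than $u_X^{\sigma_0}(x,x')$, picking any $\eps$ strictly in between places $x$ and $x'$ in distinct components of $\rips_\eps(X_{\sigma_0},d_X)$ but in the same component of $\rips_\eps(X_\sigma,d_X)$, forcing the horizontal map $\hzero(\rips_\eps(X_{\sigma_0},d_X))\to\hzero(\rips_\eps(X_\sigma,d_X))$ to identify two distinct classes, contradicting $\X\in\rep$. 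Granted this stability, for any non-minimal $x\in(X,<)$ the candidate set $\{x'\in X:x'<x\}$ is entirely contained in $X_{f_X(x)}$; any $x^\star$ minimizing $u_X^{f_X(x)}(x,\cdot)$ over this set therefore continues to minimize $u_X^\sigma(x,\cdot)$ for all $\sigma\ge f_X(x)$ by the stability lemma. Taking $c_x\equiv x^\star$ produces a constant $<$-conqueror function, so $\X\in\eld$.

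Strictness is witnessed by explicit examples. For $\ult\subsetneq\rep$, let $X=\{x_1,x_2,x_3\}$ with $d_X(x_1,x_2)=1$, $d_X(x_1,x_3)=2$, $d_X(x_2,x_3)=3$, and $f_X(x_i)=i$; this fails ultrametricity (since $3>\max(1,2)$), but one checks the ER-staircode equals the three rectangles $[1,\infty)\times[0,\infty)$, $[2,\infty)\times[0,1)$, and $[3,\infty)\times[0,2)$, and the constant-conqueror condition holds trivially, so Theorem \ref{thm:elder rule2} yields $\X\in\rec$. For $\rec\subsetneq\eld$, the \filterMSshort{} of Figure \ref{fig:elder-rule-staircode} is in $\eld$ yet its ER-staircode (and hence, by Theorem \ref{thm:elder rule2}, its barcode) contains non-rectangular staircases, so $\X\notin\rec$. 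For $\eld\subsetneq\stair$, Example \ref{ex:counter-example} is already constructed to lie in $\stair\setminus\eld$. For $\stair\subsetneq\aug$, I invoke examples from \cite{bauer2019cotorsion} of \filterMSsshort{} whose zeroth Rips bipersistence module is not interval decomposable.

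The main obstacle is the inclusion $\rec\subseteq\eld$, because it requires passing from a representation-theoretic injectivity condition on internal maps to a concrete combinatorial assertion about conquerors. The bridge is the observation that $\rep$ is precisely the condition forbidding later-born points from merging two already-existing components, which is exactly what would be required to change the ultrametric distance between two earlier points and thereby destabilize the conqueror.
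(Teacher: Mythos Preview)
Your proof is correct and follows the same overall structure as the paper's: establish each inclusion, then exhibit a witness for each strictness. The witness examples you choose are essentially those of the paper (Figure~\ref{fig:elder-rule-staircode} for $\rec\subsetneq\eld$, Example~\ref{ex:counter-example} for $\eld\subsetneq\stair$); for $\stair\subsetneq\aug$ you defer to \cite{bauer2019cotorsion}, whereas the paper uses its own Example~\ref{ex:non-decomposable}, but both are valid.

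The one genuine difference is in the step $\rep\subseteq\eld$. The paper fixes a conqueror $x'$ of $x$ in $(X_{\sigma_0},d_X)$, picks an arbitrary conqueror $x''$ in $(X_\sigma,d_X)$, and chains four inequalities (your stability in one direction, the conqueror property of $x'$ at $\sigma_0$, the conqueror property of $x''$ at $\sigma$, and monotonicity $u_X^\sigma\le u_X^{\sigma_0}$) to force $u_X^\sigma(x,x')=u_X^\sigma(x,x'')$. You instead isolate the stability lemma $u_X^\sigma(x,x')=u_X^{\sigma_0}(x,x')$ for all $x,x'\in X_{\sigma_0}$ as a standalone consequence of $\rep$, after which the conqueror claim is immediate because the objective function being minimized is literally unchanged. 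Your route is shorter and proves a slightly stronger intermediate statement; the paper's route avoids stating the stability lemma but arrives at the same conclusion through the inequality chain. Both are elementary and neither requires more machinery than the other.
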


{We in particular remark that Example \ref{ex:non-decomposable}  provides an  \filterMSshort{} which does not belong to  $\stair$. Such  examples provide clues for how to construct \filterMSshort{}s $\X$ which yield $\hzero(\ripsbi(\X))$ whose isomorphism type is exotic, thus complementing the results of   \cite{bauer2019cotorsion}.}

\begin{proof}
\begin{enumerate}[leftmargin=*, label=(\roman*)]
    \item $\ult \subseteq \rep$:  Consider \afilterMSshort{} $\XX$ where $d_X$ is an ultrametric. By Proposition \ref{prop:trivially isomorphic} \ref{item:trivially isomorphic2}, it suffices to show that every horizontal internal map of $\bitree:\R^2\rightarrow \subpart(X)$ is injective. Pick $(\sigma_1,\eps), (\sigma_2,\eps)\in \R^2$ with $\sigma_1\leq \sigma_2$ and pick $x,y\in X$ with $f_X(x),f_X(y)\leq \sigma_1$. Assume that $[x]_{(\sigma_2,\eps)}=[y]_{(\sigma_2,\eps)}$ and let us show that $[x]_{(\sigma_1,\eps)}=[y]_{(\sigma_1,\eps)}$. The assumption implies that there exists a sequence $x=x_0,\ldots,x_n=y$ in $X_{\sigma_2}$ such that $d_{X}(x_i,x_{i+1})\leq \eps$ for each $i$. Since $d_X$ is an ultrametric, we have that $d_X(x,y)\leq \max_{i=0}^{n-1} d_X(x_i,x_{i+1})\leq \eps$. Invoking $f_X(x),f_X(y)\leq\sigma_1$, we have $[x]_{(\sigma_1,\eps)}=[y]_{(\sigma_1,\eps)}$, as desired.
    
    \item $\ult \neq \rep$: Let us equip the set $X:=\{1,2,3\}$ with the standard metric $d(i,j):=\abs{i-j}$, $i,j\in\{1,2,3\}$, and the map $f:X\rightarrow \R$ defined as $i\mapsto i$ for $i=1,2,3$. Observe that $d_X$ is not an ultrametric, but every horizontal internal map of $\hzero(\ripsbi(\X))$ is injective.
    
    \item $\rep \subseteq \eld$: Consider \afilterMSshort{} $\XX$ in $\rep$. Pick an order $<$ on $X$ which is compatible with $f_X$. Let $x\in (X,<)$ be a non-minimal element and let $\sigma_0:=f_X(x)$. Let $x'$ be a conqueror of $x$ in the metric space $(X_{\sigma_0},d_X)$. It suffices to show that for each $\sigma\in [\sigma_0,\infty)$, $x'$ is a conqueror of $x$ in $(X_{\sigma},d_X)$. Fix $\sigma\in [\sigma_0,\infty)$. Let $x''\in X_{\sigma}$ be a conqueror of $x$ in $(X_\sigma,d_X)$. Let $u_X^{\sigma}:X\times X\rightarrow \R$ be the (ultra)metric induced by $(X_\sigma,d_X)$ as in (\ref{eq:ultrametric}). Let $\eps:=u_X^{\sigma}(x'',x)$. By definition of $x''$, we have 
\begin{equation}\label{eq:inequality}
    \eps\leq u_X^{\sigma}(x',x).
\end{equation}
Also, by definition of $\eps$, we have $[x]_{(\sigma,\eps)}=[x'']_{(\sigma,\eps)}$. Since $\X$ belongs to $\rep$, it also holds that $[x]_{(\sigma_0,\eps)}=[x'']_{(\sigma_0,\eps)}$, implying 
\begin{equation}\label{eq:inequality2}
    u_X^{\sigma_0}(x,x'')\leq \eps. 
\end{equation}
Since $x'$ is a conqueror of $x$ in $(X_{\sigma_0},d_X)$, we have:
\begin{equation}\label{eq:inequality3}
u_X^{\sigma_0}(x,x')\leq u_X^{\sigma_0}(x,x'').
\end{equation}
Also, since $u_X^{\sigma}\leq u_X^{\sigma_0}$, we have:
\begin{equation}\label{eq:inequality4}
u_X^{\sigma}(x,x')\leq u_X^{\sigma_0}(x,x').
\end{equation}
By concatenating inequalities (\ref{eq:inequality}), (\ref{eq:inequality2}), (\ref{eq:inequality3}), and (\ref{eq:inequality4}) we obtain: 
\[ u_X^{\sigma}(x,x')\leq u_X^{\sigma_0}(x,x'')\leq \eps\leq u_X^{\sigma}(x',x) \leq u_X^{\sigma}(x,x').
\]
The both very end sides are the same, implying that $\eps=u_X^{\sigma}(x,x')$. Since $\eps=u_X^{\sigma}(x'',x)$ and $x''$ is a conqueror of $x$ in $(X_{\sigma},d_X)$, we conclude that $x'$ is another conqueror of $x$ in  $(X_{\sigma},d_X)$, as desired.

\item $\rep \neq \eld$: It is not hard to check that the \filterMSshort{} depicted in Figure \ref{fig:elder-rule-staircode} (A) belongs to $\eld$ but not $\rep$.

\item $\eld \subsetneq \stair$: This follows from Theorem \ref{thm:elder rule2} and Example \ref{ex:non-decomposable}.

\item $\stair \subsetneq \aug$: This directly follows from Example \ref{ex:non-decomposable}.
\end{enumerate}
\end{proof}

%%%%%%%%%%%%%%%%%%%%%%%%%%%%%%%%%%%%%%%%%%%%%%%%%%%%%%%%%%%%%%%%%%%%%%%%%%%%%%%%%%%%%%%%%%%
\section{\Newbarcodes{} and graded Betti numbers}\label{sec:staircodes and Bettis} \label{sec:staircodes recover Bettis}\label{sec:bipersistence modules}

In this section we show that given an \filterMSshort{} $\X$ the graded Betti numbers of $\hzero(\ripsbi_{\bullet}(\X))$ can be easily extracted from the \newbarcodeshort{} of $\X$ (Theorem \ref{thm:recover graded Bettis}). Along the way, we obtain a characterization result for the graded Betti number of  $\hzero(\ripsbi_{\bullet}(\X))$ (Theorem \ref{thm:minimal resolution}), which is of independent interest. 

\paragraph{Computing the graded Betti numbers of $\hzero(\ripsbi_{\bullet}(\X))$ for \afilterMSshort{} $\X$.} Henceforth, for simplicity, every \filterMSshort{} $\XX$ is assumed to be \emph{generic}:  $f_X$ is injective and every pair of elements in $X$  has different distance. The case of non-generic \filterMSshort{} can be easily handled; see Remark \ref{rem:non-injective}. 
Since $\X$ is finite, it suffices to consider $\Z^2$-indexed filtration described subsequently as a substitute of  $\ripsbi_{\bullet}(\X)$ for our inductive proof of Theorem \ref{thm:recover graded Bettis}:

\begin{definition}\label{def:Z2 indexed} Consider \afilterMSshort{} $\XX$ with $X:=\{x_1,\ldots,x_n\}$ and assume that $f_X(x_1)< \ldots < f_X(x_n)$. Define $f_X^{\Z}:X\rightarrow \N$ as $x_i\mapsto i$. Define $d_X^{\Z}:X\times X\rightarrow \N$ by sending each non-trivial pair $(x_i,x_j)$ ($i\neq j$) to $\ell\in\left\{1,\ldots,\binom{n}{2}\right\}$, where $d_X(x_i,x_j)$ is the $\ell$-th smallest distance (among non-zero distance values). The restriction of $\ripsbi_\bullet(X,d_X^\Z,f_X^\Z):\R^2\rightarrow \simp$ to $\Z^2$ is the \emph{$\Z^2$-indexed Rips filtration}\footnote{$d_X^{\Z}$ does not necessarily satisfy the triangle inequality, but it does not prevent from defining $\ripsbi_{\bullet}(X,d_X^{\Z},f_X^{\Z})$.} of $\X$. Also, let $\gamma_{j}^\X$ denote the $j$-th elder-rule feature function of $(X,d_X^{\Z},f_{X}^{\Z})$ for $j=0,1,2$ in this section.
\end{definition}
For Theorem \ref{thm:minimal resolution}, we introduce relevant terminology and notation. Let $\Scal$ be the $\Z^2$-indexed Rips filtration of \afilterMSshort{} $\X$ and let $\K$ be the \emph{1-skeleton of} $\Scal$, i.e. $\K$ is another $\Z^2$-indexed filtration where $\K(\ba)$ is the 1-skeleton of $\Scal(\ba)$ for every $\ba\in \Pb$. 
\begin{itemize}[leftmargin=*]
\item Note that $\K$ is \emph{$1$-critical}: every simplex that appears in $\K$ has a unique birth index. 
 Let $e$ be an edge that appears in $\K$ whose birth index is $\bb(e)=(b_1,b_2)\in \Z^2$. We say that the edge $e$ is  \emph{negative} if the number of connected components in $\K(b_1,b_2)$ is strictly less than that of $K(b_1,b_2-1)$. Otherwise, the edge $e$ is \emph{positive}.
\item  Given a simplicial complex $K$ and $k\in \ZP$, let $C_k(K)$ be the $k$-th chain group of $K$, i.e. the $\F$-vector space freely generated by $k$-simplices in $K$. For $k\in\ZP$, let $\partial_k: C_k(K) \to C_{k-1}(K)$ be the boundary map, and  $Z_k(K):=\ker (\partial_k)$ the $k$-th cycle group of $K$.

\item Let $\K:\Z^2\rightarrow \simp$ be a filtration. For each $k\in \ZP$, let $C_k(\K):\Z^2\rightarrow \vect$ be the module defined as $C_k(\K)(\ba):=C_k(\K(\ba))$, where the internal maps $\varphi_\K(\ba,\bb)$ are the canonical inclusion maps $C_k(\K(\ba))\hookrightarrow C_k(\K(\bb))$.  In particular, if $\K$ is 1-critical, then $C_k(\K)$ is the free module whose basis elements one-to-one correspond to all the $k$-th simplices in $S$. More specifically, the birth of a simplex $\sigma\in S$ in $\K$ at $\ba\in\Z^d$ corresponds to a generator of $C_k(\K)$ at $\ba$. 
\end{itemize}

\begin{theorem}\label{thm:minimal resolution} 
Let $\K$ be the 1-skeleton of the $\Z^2$-indexed Rips filtration of \afilterMSshort{}. Let $\K^-$ be the filtration of $\K$ that is obtained by removing all positive edges in $\K$. Then, 
\begin{enumerate}[label=(\roman*)]
     \item The following sequence of persistence modules is exact:
 \begin{equation}\label{eq:minimal resolution}
     0\xrightarrow{}Z_1(\K^-) \xrightarrow{i} C_1(\K^-) \xrightarrow{\partial_1} C_0(\K^-) \xrightarrow{p} \hzero(\K)\xrightarrow{} 0, 
\end{equation}
 where $i$ is the canonical inclusion, $\partial_1$ is the boundary map, $p$ is the canonical projection.\label{item:exactness}
    \item The sequence in (\ref{eq:minimal resolution}) is a minimal free resolution of $\hzero(\K)$.\footnote{This means that $F^0= C_0(\K^-)$, $F^1=C_1(\K^-)$, $F^2=Z_1(\K^-)$ and $F^i=0$ for $i>2$ in the chain of (\ref{eq:resolution}).}\label{item:minimality}
     \end{enumerate}
\end{theorem}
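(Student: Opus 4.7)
The plan is to deduce both (i) and (ii) from a single combinatorial lemma comparing $\K$ and $\K^-$ at the level of connected components. Concretely, the first step is to establish the following \emph{key lemma}: for every $\ba = (b_1,b_2) \in \Z^2$, the inclusion $\K^-(\ba) \hookrightarrow \K(\ba)$ induces a bijection $\pi_0(\K^-(\ba)) \to \pi_0(\K(\ba))$. Since the two complexes share the same vertex set, this map is automatically surjective, so the content is injectivity: given a path in $\K(\ba)$ connecting vertices $u,v$, I want to rewrite it so that only negative edges appear. The natural induction is on the second coordinate $b_2$. Whenever the path uses a positive edge $e$ born at $(c_1,c_2)$ with $c_2 \leq b_2$, the endpoints of $e$ already lie in the same component of $\K(c_1,c_2-1)$ by the definition of ``positive''; applying the inductive hypothesis at grade $(c_1,c_2-1)$, whose second coordinate is strictly smaller than $b_2$, those endpoints lie in the same component of $\K^-(c_1,c_2-1) \subseteq \K^-(\ba)$, so $e$ may be replaced by a sub-path consisting of negative edges. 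Concatenating these replacements yields the desired path in $\K^-(\ba)$.

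\textbf{Part (i).} Injectivity of $i$ and exactness at $C_1(\K^-)$ are tautological from the definitions. Because $\K^-$ and $\K$ share a vertex set, $C_0(\K^-) = C_0(\K)$, so $p$ is well defined and surjective onto $\hzero(\K)$. For exactness at $C_0(\K^-)$, I need $\im \partial_1^{\K^-}(\ba) = \im \partial_1^{\K}(\ba)$ at every $\ba$; one inclusion is automatic from $C_1(\K^-) \subseteq C_1(\K)$, and the reverse follows from the key lemma by a dimension count, since both images have codimension $|\pi_0(\K(\ba))| = |\pi_0(\K^-(\ba))|$ inside $C_0(\K(\ba))$.

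\textbf{Part (ii).} The modules $C_0(\K^-)$ and $C_1(\K^-)$ are free by $1$-criticality, with generators located at the birth grades of the vertices and of the negative edges, respectively. For $Z_1(\K^-)$ I would apply the key lemma together with an Euler-characteristic count for the graph $\K^-(\ba)$: writing $N(\ba)$ for the number of negative edges with birth $\leq \ba$, counting component merges in $\K$ gives $|\pi_0(\K(\ba))| = |X_{b_1}| - N(\ba)$, whereas $\chi(\K^-(\ba)) = |X_{b_1}| - N(\ba) = |\pi_0(\K^-(\ba))| - \dim Z_1(\K^-)(\ba)$. The key lemma then forces $Z_1(\K^-)(\ba) = 0$ for every $\ba$, so $F^2 = 0$ is trivially free and the resolution actually collapses to length one. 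Minimality at level $1$ is immediate: any negative edge $e = \{u,v\}$ born at $\ba = (b_1,b_2)$ has $b_2 \geq 1$ while $u$ and $v$ are born at grades with second coordinate $0$, hence both vertex births are strictly below $\ba$ and $\partial_1(e) = v - u$ lies in $I\,C_0(\K^-)(\ba)$. Minimality at level $2$ is vacuous since $\partial_2 = 0$.

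The crux is the key lemma. Once one has chosen the right induction variable (the second coordinate of $\ba$, matching the direction in which positivity is defined), everything else is Euler-characteristic bookkeeping together with a check that each free generator sits strictly above the grades of the components of its boundary.
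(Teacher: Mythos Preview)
Your key lemma and the derivation of Part (i) match the paper's approach (the paper records your key lemma as the observation $\pi_0(\K(\ba))=\pi_0(\K^-(\ba))$ in the proof of Lemma~\ref{lem:isomorphic without positives}). The problem is Part (ii): your claim that $Z_1(\K^-)=0$ is false.

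The identity $|\pi_0(\K(\ba))| = |X_{b_1}| - N(\ba)$ does not hold, because ``negative'' is a property of an edge \emph{at its birth grade}, not at every grade above it. An edge $e$ born at $(c_1,c_2)$ is negative because it merges two components of $\K(c_1,c_2-1)$; but at a grade $\ba=(b_1,b_2)$ with $b_1>c_1$, other negative edges whose births have first coordinate in $(c_1,b_1]$ may already have connected the endpoints of $e$ inside $\K^-(b_1,c_2-1)$, so $e$ contributes a cycle rather than a merge there. Concretely, take three points with $f_X(x_i)=i$ and $d_X(x_1,x_2)=3$, $d_X(x_1,x_3)=1$, $d_X(x_2,x_3)=2$: each of the three edges is negative at its own birth, yet $\K^-(3,3)$ is the full triangle and $Z_1(\K^-)(3,3)\cong\F$. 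The paper's own running example (Figure~\ref{fig:minimal resolution}) already exhibits this: the cycle $x_2x_3+x_3x_4+x_4x_2$ consists solely of negative edges and supplies the support point of $\beta_2^M$.

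The paper handles Part (ii) differently: freeness of $Z_1(\K^-)$ is obtained by invoking a general structural fact (kernels of morphisms between free $\Z^2$-indexed modules are free), and minimality at level~2 is argued directly by showing that whenever a cycle $c\in Z_1(\K^-)$ is born at grade $\bb$, every edge occurring in $c$ must have been born strictly before $\bb$---otherwise the last edge to appear in the vertical slice at $\bb$ would be positive, contradicting its presence in $\K^-$. Your Euler-characteristic shortcut does not go through; you need arguments along these lines instead.
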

We prove Theorem \ref{thm:minimal resolution} at the end of this section. For example, consider the \filterMSshort{} $\X$ in Figure \ref{fig:elder-rule-staircode} (A).  We can read off the graded Betti number of $\hzero(\ripsbi_{\bullet}(\X)):\R^2\rightarrow \vect$ from $\ripsbi_{\bullet}(\X)$.
See Figure \ref{fig:minimal resolution}. 

%%%%%%%%%%%%%%%%%
\begin{figure}
    \centering
    \includegraphics[width=\textwidth]{}
    \caption{(A) The \newbarcodeshort{} $\I_\X$ of $\X$ in Figure \ref{fig:elder-rule-staircode} (A). The types of corner points are indicated by circles (0-th), stars (1-st), and squares (2-nd). (B) The 1-skeleton of $\K:=\ripsbi_\bullet(\X)$. Red edges and black edges are negative and positive, respectively. The four generators of $C_0(\K)$ are located at grades $(1,0),(2,0),(3,0),(4,0)$, forming the support of the zeroth graded Betti number of $\hzero(\K)$ (marked by circles). The birth grades of four negative edges are $(2,3)$, $(3,4)$, $(4,1.5)$, and $(4,2.5)$, forming the support of the first graded Betti number (marked by stars). The unique cycle consisting solely of negative edges is $x_2x_3+x_3x_4+x_4x_2$, which is born at $(4,4)$, the unique support point of the second graded Betti number. \textbf{Observe that the locations of corner points in $\I_\X$ one-to-one correspond to the support of graded Betti numbers of $\hzero(\K)$ which illustrates that Theorem \ref{thm:recover graded Bettis} holds.} }
    \label{fig:minimal resolution}
\end{figure}

\paragraph{The \newbarcodeshort{} and the graded Betti numbers.} Next we will see that for any \filterMSshort{} $\X$, the graded Betti numbers of the zeroth homology of $\ripsbi_{\bullet}(\X)$ can be extracted from the \newbarcodeshort{} of $\X$. 

Given finite $M:\Z^2\rightarrow \vect$, the \emph{support} of the $i$-th graded Betti number $\beta_i^M$ of $M$ is defined as
$\supp(\beta_i^M):=\{\ba\in \Z^2:\beta_i^M(\ba)\neq 0\}.$
Theorem \ref{thm:minimal resolution} directly implies:

\begin{lemma}\label{cor:no intersection}Let $\K$ be the $\Z^2$-indexed Rips filtration of \afilterMSshort{} and let $M:=\hzero(\K)$. For each $i=0,1,2$, $\beta^M_i(\ba)\leq 1$, $\ba\in \Z^2$ and for every pair $i\neq j$ in $\{0,1,2\}$, $\supp(\beta_i^M)\cap \supp(\beta_j^M)=\emptyset$ 

\end{lemma}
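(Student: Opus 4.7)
My plan is to read off both conclusions directly from the minimal free resolution provided by Theorem \ref{thm:minimal resolution}, using the genericity assumption on $\X$ (injective $f_X$ and pairwise-distinct distances in the $\Z^2$-indexed filtration). Recall that $\beta_i^M(\bb)$ is the number of basis generators of $F^i$ at grade $\bb$; by that theorem, $F^0 = C_0(\K^-)$, $F^1 = C_1(\K^-)$, $F^2 = Z_1(\K^-)$, and $F^i = 0$ for $i > 2$, so I control the three Betti functions separately.

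First I would handle $\beta_0^M$ and $\beta_1^M$. The basis of $F^0$ is in bijection with the vertex set, with $x_j$ sitting at grade $(j, 0)$; injectivity of $f_X^\Z$ forces all these grades to be distinct and to lie on the $\eps = 0$ line, yielding $\beta_0^M \leq 1$ with $\supp(\beta_0^M) \subseteq \{(j, 0) : 1 \leq j \leq n\}$. The basis of $F^1$ is in bijection with the negative edges, and $(x_i, x_j)$ sits at $\eps$-coordinate equal to the rank of $d_X(x_i, x_j)$; genericity gives distinct $\eps$-coordinates for distinct edges, so $\beta_1^M \leq 1$ and $\supp(\beta_1^M)$ lies in $\{(s, t) : t \geq 1\}$, disjoint from $\supp(\beta_0^M)$. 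Since any cycle needs at least one edge, $\supp(\beta_2^M) \subseteq \{(s, t) : t \geq 1\}$ as well, so $\supp(\beta_0^M)$ is disjoint from both.

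The heart of the proof is $F^2 = Z_1(\K^-)$. At a grade $\bb = (s, t)$ with $t \geq 1$, $\beta_2^M(\bb)$ equals the dimension of $Z_1(\K^-)(\bb)$ modulo $\sum_{\bc < \bb} \im\bigl(Z_1(\K^-)(\bc) \to Z_1(\K^-)(\bb)\bigr)$. By genericity, at most one edge in $\K^-(\bb)$ has $\eps$-coordinate exactly $t$; call it $e_\ell$ when it exists. A cycle $z \in Z_1(\K^-)(\bb)$ that does not use $e_\ell$ is supported on edges of $\eps$-coordinate $< t$, so it already lies in $Z_1(\K^-(s, t-1))$ and vanishes in the quotient. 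Any two cycles $z_1, z_2$ that do use $e_\ell$ with respective coefficients $c_1, c_2 \neq 0$ satisfy $c_2 z_1 - c_1 z_2 \in C_1(\K^-(s, t-1))$, and being itself a cycle this combination lies in $Z_1(\K^-(s, t-1))$, hence in the image from $(s, t-1) < \bb$. Thus the quotient has dimension at most one, i.e.\ $\beta_2^M(\bb) \leq 1$.

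It remains to show $\supp(\beta_1^M) \cap \supp(\beta_2^M) = \emptyset$. Suppose some $\bb = (s, t)$ sits in both. Then $e_\ell$ is a negative edge born at $\bb$, and any cycle $z$ representing a nonzero class in the $\beta_2^M$-quotient decomposes as $z = c \cdot e_\ell + z'$ with $c \neq 0$ and $z' \in C_1(\K(s, t-1))$. Since $\partial z = 0$, the chain $-z'/c$ has boundary $v_b - v_a$, where $\partial e_\ell = v_b - v_a$, forcing $v_a$ and $v_b$ into the same connected component of $\K(s, t-1)$; but negativity of $e_\ell$ at grade $(s, t)$ says precisely the opposite, a contradiction. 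I expect the main obstacle to be the $\beta_2^M$ step: both its unit upper bound and its disjointness from $\supp(\beta_1^M)$ require converting genericity into a cancellation argument inside the cycle group, rather than the straightforward basis-counting available for $F^0$ and $F^1$.
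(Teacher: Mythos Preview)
Your proof is correct and follows the same approach as the paper: invoke the minimal resolution of Theorem \ref{thm:minimal resolution} and use genericity to place the generators of $C_0(\K^-)$, $C_1(\K^-)$, and $Z_1(\K^-)$ at pairwise distinct grades. The paper's own proof is terser (it asserts the claims for $C_1(\K^-)$ and $Z_1(\K^-)$ directly from Definition \ref{def:Z2 indexed}), so your cycle-cancellation argument for $\beta_2^M\le 1$ and your negativity contradiction for $\supp(\beta_1^M)\cap\supp(\beta_2^M)=\emptyset$ simply spell out details the paper leaves implicit.
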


\begin{proof}
\label{proof:no intersection}Since we concern the zeroth homology of $\K$, let us assume that $\K$ itself consists solely of vertices and edges. By Theorem \ref{thm:minimal resolution}, it suffices to show that every generator of $Z_1(\K^-)$, $C_1(\K^-)$, and $C_0(\K^-)$ is born at a different grade. In $C_0(\K^-)$, every vertex $x_i$ is born at $(i,0)$ for $i=1,\ldots,n$. Therefore, $\beta_0^M(\ba)\leq 1$ for every $\ba\in\Z^2$ and $\supp(\beta_0^M)\subset \Z\times\{0\}$. Also, by Definition \ref{def:Z2 indexed}, every generator of $C_1(\K^-)$ and $Z_1(\K^-)$ is born at different grade in $\Z\times \N$, completing the proof.
\end{proof}

%%%%%%%%%%%%%%%% 
Given any two functions $\alpha, \alpha':\Z^2\rightarrow \ZP$, we define $\alpha-\alpha':\Z^2\rightarrow \ZP$ as
\[(\alpha-\alpha')(\bx)= \max(\alpha(\bx)-\alpha'(\bx),0), \ \mbox{for $\bx\in \Z^2$}. \]

\begin{theorem}
\label{thm:recover graded Bettis}
Let $\K$ be the $\Z^2$-indexed Rips filtration of \afilterMSshort{} $\X$ and let $M:=\hzero(\K)$. Let $\beta_i^M$ be the $i$-th grade Betti number of $M$.  Then, 
\begin{equation}\label{eq:claim}
    \beta_0^M=\gamma_{0}^{\X}, \ \ \  \beta_1^M=\gamma_{1}^{\X}-\gamma_{2}^{\X}, \ \ \  \beta_2^M=\gamma_{2}^{\X}- \gamma_{1}^{\X}.
\end{equation}
\end{theorem}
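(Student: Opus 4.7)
The plan is to combine three ingredients: the explicit minimal free resolution from Theorem \ref{thm:minimal resolution}, the alternating-sum identity relating graded Bettis to the Hilbert function (Theorem \ref{thm:betti formula}) together with its combinatorial analogue Theorem \ref{thm:elder rule barcode recovers the cardinality function}, and the disjointness of the supports $\supp(\beta_i^M)$ provided by Lemma \ref{cor:no intersection}.

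First I would dispatch $\beta_0^M=\gamma_0^\X$ directly. By Theorem \ref{thm:minimal resolution}, $F^0=C_0(\K^-)=C_0(\K)$ is a free module generated by the vertices $x_1,\ldots,x_n$, with $x_i$ born at grade $(i,0)\in\Z^2$; hence $\beta_0^M$ is the $\{0,1\}$-valued indicator of $\{(i,0):1\le i\le n\}$. On the other hand, the type-$0$ (lower-left) corner of each staircase $I_{x_i}$ is precisely $(f_X^{\Z}(x_i),0)=(i,0)$, since $x_i$ is trivially the oldest element of its own singleton component at $\eps=0$. So $\gamma_0^\X$ has the identical description and the two functions agree.

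Next I would derive the pointwise alternating-sum identity $\sum_{i=0}^2(-1)^i\beta_i^M(\bx)=\sum_{j=0}^2(-1)^j\gamma_j^\X(\bx)$ for every $\bx\in\Z^2$. By Proposition \ref{prop:trivially isomorphic}\ref{item:trivially isomorphic2} and the fact that $\free$ sends a sub-partition to a vector space of dimension equal to its number of blocks, $\hf(M)(\ba)=|\bitreex(\ba)|$. Plugging this equality into Theorem \ref{thm:betti formula} (with $\beta_i^M\equiv 0$ for $i>2$, cf.\ Remark \ref{rem:graded betti numbers} or the length-two resolution in Theorem \ref{thm:minimal resolution}) and Theorem \ref{thm:elder rule barcode recovers the cardinality function} yields equality of the two partial sums over $\{\bx\le\ba\}$ for every $\ba\in\Z^2$. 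Their difference is a finitely supported function on $\Z^2$, so a one-line discrete Möbius inversion---subtracting the partial sums over $\{\bx\le(a_0-1,b_0)\}$ and $\{\bx\le(a_0,b_0-1)\}$ from the one over $\{\bx\le(a_0,b_0)\}$ and adding back the one over $\{\bx\le(a_0-1,b_0-1)\}$---forces the difference to vanish at every $(a_0,b_0)$.

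Finally, cancelling the $i{=}0,j{=}0$ terms via Step~1 reduces the identity to $\beta_1^M(\bx)-\beta_2^M(\bx)=\gamma_1^\X(\bx)-\gamma_2^\X(\bx)$ for every $\bx$. By Lemma \ref{cor:no intersection}, $\beta_1^M(\bx),\beta_2^M(\bx)\in\{0,1\}$ and at most one of them is nonzero, so the left-hand side lies in $\{-1,0,1\}$; a three-case analysis (which of $\beta_1^M(\bx),\beta_2^M(\bx)$ is $1$, or both are $0$) immediately forces $|\gamma_1^\X(\bx)-\gamma_2^\X(\bx)|\le 1$ and yields $\beta_1^M(\bx)=\max(\gamma_1^\X(\bx)-\gamma_2^\X(\bx),0)$ together with $\beta_2^M(\bx)=\max(\gamma_2^\X(\bx)-\gamma_1^\X(\bx),0)$, which are exactly the formulas (\ref{eq:claim}) under the paper's truncated-subtraction convention. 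The only nontrivial step is the Möbius-inversion one, but for finitely supported functions on $\Z^2$ it is completely routine; the substance of the argument is carried by the minimal-resolution description and the disjoint-support structure that Theorem \ref{thm:minimal resolution} and Lemma \ref{cor:no intersection} extract from the Rips geometry.
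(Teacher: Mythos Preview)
Your proposal is correct and follows essentially the same architecture as the paper's proof: establish $\beta_0^M=\gamma_0^\X$ directly from the minimal resolution, combine Theorem~\ref{thm:betti formula} with Theorem~\ref{thm:elder rule barcode recovers the cardinality function} to match the two alternating sums, and then use the $\{0,1\}$-valuedness and disjoint-support property from Lemma~\ref{cor:no intersection} for the final three-case analysis.

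The one genuine difference is in how the pointwise identity $\beta_1^M(\bx)-\beta_2^M(\bx)=\gamma_1^\X(\bx)-\gamma_2^\X(\bx)$ is extracted from the equality of cumulative sums. The paper carries this out by a double induction, first along the horizontal line $y=1$ (with base case on the column $x=1$, where everything vanishes by (\ref{eq:induction base})), and then repeating the argument on successive lines $y=2,\ldots,y=\binom{n}{2}$; at each step the induction hypothesis cancels all lower-grade terms in the equality $(\ast)$. Your M\"obius-inversion argument does the same thing in one stroke: the four-term inclusion--exclusion you describe is exactly what the paper's induction is unrolling coordinate by coordinate. Your packaging is cleaner and makes transparent that no special feature of the Rips geometry is used in this step beyond finite support; the paper's version has the mild pedagogical advantage of keeping everything inside the explicit index set $\A=\{1,\ldots,n\}\times\{0,\ldots,\binom{n}{2}\}$.
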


  In particular, we note that the elder-rule feature functions $\gamma_j^{\X}$ are easy to compute, as one only needs to compute and aggregate the type of each corner in staircase intervals in the \newbarcodeshort{} of $\X$. Once $\gamma_j^{\X}$s are known, one can easily compute the graded Betti number of $\hzero(\ripsbi_{\bullet}(\X))$ by Theorem \ref{thm:recover graded Bettis}. 
  {See Example \ref{ex:non-decomposable} below.}  We also remark that \emph{Koszul homology formulae} \cite[Proposition 5.1]{lesnick2019computing} are in a similar form to those in (\ref{eq:claim}). However, Koszul homology formulae do not directly imply those in (\ref{eq:claim}) nor vice versa.
  
  \begin{example}[Non-interval-decomposable case]\label{ex:non-decomposable} Consider the metric space $(\{x_i\}_{i=1}^4,d_X)$ in Figure \ref{fig:elder-rule-staircode} (A). Define $h_X:\{x_i\}_{i=1}^4\rightarrow \R$ as $h_X(x_1)=1,\ h_X(x_2)=3, \ h_X(x_3)=2, h_X(x_4)=4$. For $\X:=(X,d_X,g_X)$, let $\I_{\X}:=\{I_{x_i}:i=1,2,3,4\}$ be the \newbarcodeshort{} and let $M:=\hzero(\ripsbi(\X))$ and $N:=\bigoplus_{i=1}^4 I^{I_{x_i}}$. Utilizing Theorem \ref{thm:recover graded Bettis}, it it not hard to check that $\beta_1^M\neq \beta_1^N$ and $\beta_2^M\neq \beta_2^N$ (see Figure \ref{fig:non-decomposable}). Therefore, $M\not\cong N$ and thus, by Theorem \ref{thm:strong elder rule}, $M$ is \emph{not} interval decomposable.
\end{example}

\begin{figure}
    \centering
    \includegraphics[width=\textwidth]{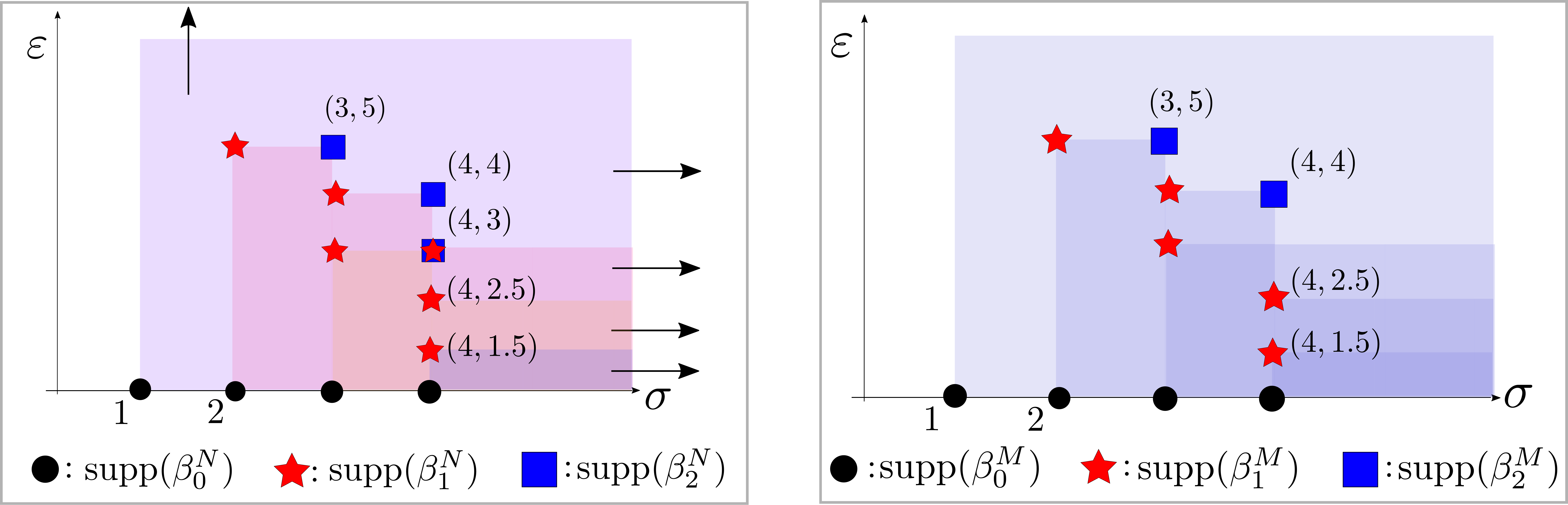}
    \caption{{The respective supports of the graded Betti numbers of $N$ (left) and $M$ (right) from Example \ref{ex:non-decomposable}. All the graded Betti numbers attain the value $1$ on their supports. The graded Betti numbers of $N$ are directly obtained by stacking the staircase intervals in the \newbarcodeshort{} of $\X$ (Remarks \ref{rem:graded betti numbers} \ref{item:graded betti numbers1} and \ref{rem:feature functions are graded Bettis}). The graded Betti numbers of $M$ are obtained by applying Theorem \ref{thm:recover graded Bettis} to the graded Betti numbers of $N$; in particular the support-points of $\beta_1^N$ and $\beta_2^N$ at $(4,3)$ are canceled out.}}
    \label{fig:non-decomposable}
\end{figure}

\begin{proof}[Proof of Theorem \ref{thm:recover graded Bettis}]
Let $\mathcal{X}:=(X,d_X,f_X)$ with $X=\{x_1,\ldots,x_n\}$, and assume that $f_X(x_1)<\ldots<f_X(x_n)$. By the construction of $\K$ and $\gamma^\X_i$, it suffices to show the equalities in (\ref{eq:claim}) hold on $\A:=\{1,2,\ldots,n\}\times\{0,1,\ldots,\binom{n}{2}\}\subset \Z^2$ ($\beta_i^M$ and $\gamma_i^\X$ vanish outside $\A$ for $i=0,1,2$). By Theorem \ref{thm:minimal resolution} and the construction of $\gamma_0^{\X}$, both of $\beta_0^M$ and $\gamma_0^{\X}$ have values 1 on $\A|_{y=0}=\{(1,0),(2,0),(3,0)\ldots,(n,0)\}$ and zero outside $\A|_{y=0}$, implying that $\beta_0^M=\gamma_0^\X$. Note that when $i=1,2$, the supports of $\beta_i^M$ and $\gamma_i^\X$ are contained in $\A|_{y>0}=\{1,2,\ldots,n\}\times\{1,\ldots,\binom{n}{2}\}$.
Using induction on $x$-coordinate of $\Z^2$, we will prove that  $\beta_1^M=\gamma_1^{\X}- \gamma_2^{\X}$ and $\beta_2^M=\gamma_2^{\X}-\gamma_1^{\X}$ \emph{on the horizontal line $\A|_{y=1}=\{1,2,\ldots,n\}\times\{1\}$}. Note that  
$\K(1,b)=\{\{x_1\}\} \ \mbox{for all $1\leq b\leq \binom{n}{2}$,}$
and thus again by Theorem \ref{thm:minimal resolution} and the construction of $\gamma_i^\X$, $i=1,2$,
\begin{equation}\label{eq:induction base}
   \ \mbox{for $1\leq b\leq \binom{n}{2}$},\  \ \beta_1^M(1,b)=\gamma_1^\X(1,b)=0, \ \mbox{and} \ \beta_2^M(1,b)=\gamma_2^\X(1,b)=0. 
\end{equation}
Specifically, we have $\beta_1^M(1,1)=\gamma_1^\X(1,1)=\gamma_1^\X(1,1)-\gamma_2^\X(1,1)$ and $\beta_2^M(1,1)=\gamma_2^\X(1,1)=\gamma_2^\X(1,1)-\gamma_1^\X(1,1)$. Fix a natural number $m>2$ and assume that $\beta_1^M(a,1)=\gamma_1^\X(a,1)-\gamma_2^\X(a,1)$ and $\beta_2^M(a,1)=\gamma_2^\X(a,1)-\gamma_1^\X(a,1)$ for $1\leq a\leq m-1$. By Theorem \ref{thm:elder rule barcode recovers the cardinality function} and Theorem \ref{thm:betti formula} in the appendix, we have: 
$\sum_{\bx\leq (m,1)}\sum_{i=0}^2(-1)^i\beta_i^M(\bx)\stackrel{(\ast)}{=}\sum_{\bx\leq (m,1)}\sum_{i=0}^2(-1)^i\gamma_i^\X(\bx).$
Since (1) $\beta_0^M=\gamma_0^\X$ on the entire $\Z^2$, and (2) $\beta_i^M,\gamma_i^\X$ vanish outside $\A$ for $i=1,2$, the induction hypothesis reduces equality $(\ast)$ to 
\[-\beta_1^M(m,1)+\beta_2^M(m,1)=-\gamma_1^\X(m,1)+\gamma_2^\X(m,1).\]
By Lemma \ref{cor:no intersection}, three cases are possible: (Case 1) $\beta_1^M(m,1)=1$ and $\beta_2^M(m,1)=0$, (Case 2) $\beta_1^M(m,1)=0$ and $\beta_2^M(m,1)=1$, or (Case 3) $\beta_1^M(m,1)=0$ and $\beta_2^M(m,1)=0$. Invoking that $\gamma_1^{\X}(m,1)$ and $\gamma_2^{\X}(m,1)$ are non-negative, in all cases, we have 
\[\beta_1^M(m,1)=\gamma_1^{\X}(m,1)- \gamma_2^{\X}(m,1), \ \ \  \beta_2^M(m,1)=\gamma_2^{\X}(m,1)- \gamma_1^{\X}(m,1),\]
completing the proof of $\beta_1^M=\gamma_1^{\X}- \gamma_2^{\X}$ and $\beta_2^M=\gamma_2^{\X}- \gamma_1^{\X}$ on $\A|_{y=1}$. We next apply the same strategy to the horizontal lines $y=2,\ldots,\ y=\binom{n}{2}$ in order, completing the proof.
\end{proof}

\begin{remark}[Theorem \ref{thm:recover graded Bettis} for non-generic cases]\label{rem:non-injective} Consider \afilterMSshort{} $\XX$ such that $\X$ is not generic. Then we pick a total order $<$ on $X$ and another total order $\prec$ on the collection of all pairs $x_i,x_j$ $(x_i\neq x_j)$ in $X$, which are compatible with $f_X$ and $d_X$, respectively. This gives rise to the injective function $f_X^{\Z}$ and the pairwise-distinct-distance $d_X^{\Z}$ on $X$ as in Definition \ref{def:Z2 indexed}. The unique \newbarcodeshort{} of $\X^{\Z}=(X,d_X^{\Z},f_Z^{\Z})$  recovers the graded Betti numbers of $\hzero(\ripsbi_{\bullet}(\X^{\Z}))|_{\Z^2}:\Z^2\rightarrow \vect$ by Theorem \ref{thm:recover graded Bettis}. \end{remark}

{Below, we will make use of Theorem \ref{thm:recover graded Bettis} in proving Theorem \ref{thm:strong elder rule}.}

\begin{proof}[Proof of Theorem \ref{thm:strong elder rule}]
Without loss of generality, let us assume that $X=\{x_1,x_2,\ldots,x_n\}$ with $f_X(x_i)=i$ for $i=1,2,\ldots,n$. Also, let $M\cong \bigoplus_{k\in K} I^{J_k}$ for some indexing set $K$. Observe that $M$ is \emph{upper-right continuous}, i.e. for each $(\sigma_0,\eps_0)\in \R^2$, there exist $e_1,e_2>0$ s.t. if $\sigma_0\leq \sigma \leq \sigma_0+e_1$ and $\eps_0\leq \eps \leq \eps_0 +e_2$, then $M_{(\sigma_0,\eps_0)}=M_{(\sigma,\eps)}$. Hence, the lower-left boundary\footnote{$(\sigma,\eps)\in \R^2$ is a lower-left boundary point of $J_k$ if  $(\sigma,\eps)$ belongs to the boundary of $J_k$ and for any $r>0$, $(\sigma-r,\eps-r)\not\in J_k$. The set of lower-left boundary points of $J_k$ is called the lower-left boundary of $J_k$.} of each $J_k$ belongs to $J_k$. Also, note that $M_{(\sigma,\eps)}\neq 0$ if and only if $(\sigma,\eps)\in U(1,0)$.

\subparagraph{Claim 1} [$\barc(M)$ consists of $n$ staircase intervals (Definition \ref{def:staircase}) and their minimal elements  are $(1,0)$,$(2,0)$,\ldots,$(n,0)$.] %
First, let us show that each interval in $\barc(M)$ is a staircase whose minimal element lies on the $\sigma$-axis. Suppose not, i.e. there exists $k_0\in K$ s.t. $J_{k_0}$ is either [not a staircase] or [a staircase whose minimum is not in the $\sigma$-axis]. Either implies that $J_{k_0}$ contains a minimal element $\ba$ in the interior of $U(0,0)$ (see Figure \ref{fig:interval shape insight in a proof}).  Then, since $M\cong \bigoplus_{k\in K} I^{J_k}$, by Remark \ref{rem:graded betti numbers} \ref{item:graded betti numbers1},
we have
\[1=\beta^{I^{J_{k_0}}}_0(\ba)\leq \sum_{k\in K}\beta^{I^{J_{k}}}_0(\ba)=\beta^M_0(\ba).\]
\begin{figure}
    \centering
    \includegraphics[width=0.6\textwidth]{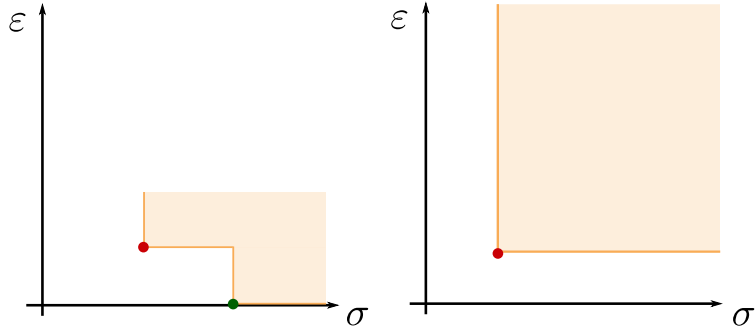}
    \caption{If an interval $J$ that is contained in the quadrant $U(1,0)$ is either [not a staircase] or [a staircase whose minimum is not in the $\sigma$-axis], then there exists a point $\ba$ in the interior of $U(0,0)$ such that  $\beta^{I^{J}}_0(\ba)=1$ (red points in the figure above).}
    \label{fig:interval shape insight in a proof}
\end{figure}
However, by Theorem \ref{thm:recover graded Bettis}, we have \[\supp(\beta^{M}_0)=\{(i,0):i=1,\ldots,n\}\not\ni \ba,\] a contradiction. Therefore, (1) each $J_k$ has its minimum element in the $\sigma$-axis, and (2) since $\beta_0^M=\sum_{k\in K}\beta^{I^{J_{k}}}_0$, by invoking Remark \ref{rem:feature functions are graded Bettis}, the minimums of $J_k$s form the set $\{(i,0):i=1,\ldots,n\}$. This implies that the indexing set $K$ contains $n$ elements, as desired.
\hfill $\square$

From now on, we denote $\barc(M)$ by $\{J_k\}_{k=1}^n$ where the minimum of $J_k$ is $(k,0)$ for each $k$. Also, let 
\begin{equation}\label{eq:set-up}
    \eps_1:=\max_{x_i,x_j\in X} d_X(x_i,x_j).
\end{equation}

\subparagraph{Claim 2}[$J_1=U(1,0)$.]  Observe that, if $\sigma\in [n,\infty)$ and $\eps\in [\eps_1,\infty)$, then 
\[\dim M_{(\sigma,\eps)}=1\ \ \ \mbox{and} \ \ \rank\ \varphi_M((1,0),(\sigma,\eps))=1.
\]
Since $\rank\ \varphi_M((1,0),(\sigma,\eps))$ is equal to the total multiplicity of elements of $\barc(M)$ which contain both $(1,0)$ and $(\sigma,\eps)$, $J_1$ must be $U(1,0)$. \hfill $\square$

Now let $f:\bigoplus_{k=1}^nI^{J_k} \rightarrow M$ be any isomorphism. For each $k$, let $1_k:=1\in (I^{J_k})_{(k,0)}(=\F)$, and let $f_{(k,0)}(1_k):=v_k\in M_{(k,0)}$. For $x_k\in X$ and $(\sigma,\eps)\in [k,\infty)\times \R_+$, let $[x_k]_{(\sigma,\eps)}$ be the zeroth homology class of $x_k$. When confusion is unlikely, we will suppress the subscript $(\sigma,\eps)$ in $[x_k]_{(\sigma,\eps)}$. 

Note that, by the definition of $M_{(k,0)}$ for each $k=1,\ldots,n$, there exist $c_{k\ell}\in \F$ for  $\ell=1,\ldots,k$ s.t.  
\begin{equation}\label{eq:basis}
\begin{aligned}
    v_1&=c_{11}[x_1]\\
    v_2&=c_{21}[x_1]+c_{22}[x_2]\\
    \vdots\\
    v_n&=c_{n1}[x_1]+\ldots+c_{nn}[x_n].    
\end{aligned}
\end{equation}
 An  $x_\ell\in X$ will be called a \emph{summand of $v_k$} if $c_{kl}\neq 0$.   Also, for each $k$, we define the function $\bv_k:U(k,0)\rightarrow \coprod_{(\sigma,\eps)\in U(k,0)}M_{(\sigma,\eps)}$ as $(\sigma,\eps)\mapsto \varphi_M((k,0),(\sigma,\eps))(v_k).$ 
 Let $\supp(\bv_k)$ be the set of $(\sigma,\eps)\in U(k,0)$ s.t. $\bv_k(\sigma,\eps)$ is nonzero in $M_{(\sigma,\eps)}$.
Since $f$ is an isomorphism, we have: 
\begin{enumerate}[label=(\roman*)]
    \item $\{\supp(\bv_k)\}_{k=1}^n=\{J_k\}_{k=1}^n$, \label{item:fact-barcode}
    \item For each $(\sigma,\eps)\in U(1,0)$, $\{\bv_k(\sigma,\eps): \sigma\in [k,\infty)\}$ is a basis of $M_{(\sigma,\eps)}$.\label{item:fact-basis}
\end{enumerate}
Now we investigate constraints on the coefficients $c_{k\ell}$.

\subparagraph{Claim 3}[For each $k$, $x_k$ is a summand of $v_k$.] By item \ref{item:fact-basis} above, the set \[B_k:=\{\bv_1(k,0),\bv_2(k,0),\ldots,\bv_k(k,0)\}\] is linearly independent in $M_{(k,0)}$. Invoking equations in (\ref{eq:basis}) and the definition of $\bv_k$, observe that if $c_{kk}=0$, then $B_k$ is linearly dependent, a contradiction. \hfill $\square$

\subparagraph{Claim 4} [For  $k\in\{2,\ldots,n\}$, $\sum_{\ell=1}^k c_{k\ell}=0$ and $v_k$ has at least two summands.] Fix $k\in\{2,\ldots,n\}$ and pick any $(\sigma,\eps)\in U(n,\eps_1)$ (see (\ref{eq:set-up})). Then, we have  $[x_{\ell_1}]_{(\sigma,\eps)}=[x_{\ell_2}]_{(\sigma,\eps)}$ for all $\ell_1,\ell_2\in \{1,\ldots,n\}$, and thus $\bv_k(\sigma,\eps)=(\sum_{\ell=1}^k c_{k\ell})\cdot [x_{k}]_{(\sigma,\eps)}$. Note that $1=\dim M_{(\sigma,\eps)}$, which is equal to the number of intervals in $\barc(M)$ that includes $(\sigma,\eps)$. Since $U(1,0)\in \barc(M)$ includes $(\sigma,\eps)$ (Claim 2), $\supp(\bv_k)$ must \emph{not} include $(\sigma,\eps)$, which implies $\sum_{\ell=1}^k c_{k\ell}$ to be $0$. This also forces $v_{k}$ to admit at least two different summands, including $x_k$ (Claim 3). \hfill $\square$

Recall that, for each $k$, $I_{x_k}$ denotes the elder-rule interval associated to $x_k$ (see (\ref{eq:barcode})).

\subparagraph{Claim 5} [For each $k$, $ I_{x_k}\subseteq \supp(\bv_k)$] By Claim 2, item \ref{item:fact-barcode} above, and Definition \ref{def:elder rule2}, we readily know $I_{x_1}=\supp(\bv_1)=U(1,0)$. Let us fix any $k\in\{2,\ldots,n\}$ and any $(\sigma,\eps)\in I_{x_{k}}$. By definition of $I_{x_k}$, $[x_k]_{(\sigma,\eps)}$ is the singleton $\{x_k\}$. Therefore, in $\bv_k(\sigma,\eps)=\sum_{\ell=1}^k c_{k\ell}[x_\ell]_{(\sigma,\eps)}$, the nontrivial term $c_{kk}[x_k]_{(\sigma,\eps)}$ cannot be combined with any other term (By Claim 3, $c_{kk}\neq 0$, and by Claim 4, there is another nonzero $c_{k\ell}$). This implies that $\bv_{k}(\sigma,\eps)\neq 0$, and in turn $(\sigma,\eps)\in \supp(\bv_k)$.\hfill$\square$

By Claim 5, we have:
\[    \hf(M)=\sum_{k=1}^n\one_{I_{x_k}} \leq \sum_{k=1}^n\one_{\supp(\bv_k)} =\hf(M).
\]
This implies that for each $k$, $\one_{I_{x_k}}=\one_{\supp(\bv_k)}$  and in turn $I_{x_k}=\supp(\bv_k)=J_k$ by item \ref{item:fact-barcode} above.
\end{proof}

\paragraph{Proof of Theorem \ref{thm:minimal resolution}.} In order to prove Theorem \ref{thm:minimal resolution}, we need the two lemmas below.

\begin{lemma}\label{lem:isomorphic without positives}Let $\K:\Z^2\rightarrow \simp$ be the 1-skeleton of the $\Z^2$-indexed Rips filtration of \afilterMSshort{}. Let $K^-$ be the filtration of $\K$ that is obtained by removing all positive edges in $\K$. Then, $\hzero(\K)\cong\hzero(\K^-)$.
\end{lemma}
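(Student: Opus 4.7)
}

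The plan is to show the stronger pointwise statement: for every $\ba\in\Z^2$, the 1-skeleta $\K(\ba)$ and $\K^-(\ba)$ share the \emph{same vertex set} (since positivity/negativity is a label on edges only) and induce the \emph{same partition} of that vertex set into connected components. Once this is established, the canonical inclusion $\K^-\hookrightarrow \K$ will induce a morphism of persistence modules $\hzero(\K^-)\rightarrow \hzero(\K)$ that is an isomorphism at every grade, hence a natural isomorphism.

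Since $\K^-\subseteq \K$, the partition $\pi_0(\K^-(\ba))$ refines $\pi_0(\K(\ba))$ for every $\ba$. Thus the nontrivial direction is: whenever a positive edge $e$ appears in $\K(\ba)$, its two endpoints are already connected in $\K^-(\ba)$. I will prove this by induction on an arbitrary linear extension $\prec$ of the partial order on the birth grades of the positive edges of $\K$. Enumerate the positive edges as $e_1,e_2,\ldots,e_N$ in $\prec$-increasing order of birth grade, and prove by induction on $i$ the claim $(C_i)$: for every $\ba\geq \bb(e_i)$, the endpoints of $e_i$ lie in the same connected component of $\K^-(\ba)$.

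For the inductive step, fix a positive edge $e_i$ with birth $\bb(e_i)=(b_1,b_2)\in \Z^2$. By the definition of ``positive,'' the number of connected components of $\K(b_1,b_2)$ equals that of $\K(b_1,b_2-1)$, which forces the two endpoints of $e_i$ to be already connected in $\K(b_1,b_2-1)$ by some edge path $f_1,\ldots,f_m$. Each $f_j$ satisfies $\bb(f_j)\leq (b_1,b_2-1)<(b_1,b_2)=\bb(e_i)$ in $\Z^2$, hence strictly precedes $e_i$ in $\prec$. If $f_j$ is negative then it already lies in $\K^-(\ba)$ for any $\ba\geq \bb(e_i)\geq \bb(f_j)$. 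If $f_j$ is positive, then $f_j=e_{i'}$ with $i'<i$, and by the inductive hypothesis $(C_{i'})$ its endpoints are connected in $\K^-(\ba)$ (again using $\ba\geq \bb(e_i)>\bb(e_{i'})$). Concatenating these replacement paths yields a path in $\K^-(\ba)$ between the endpoints of $e_i$, which establishes $(C_i)$. The base case $i=1$ is the same argument, with every $f_j$ automatically negative because $e_1$ is the $\prec$-smallest positive edge.

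From $(C_i)$ for all $i$ we conclude $\pi_0(\K(\ba))=\pi_0(\K^-(\ba))$ for every $\ba\in\Z^2$. Applying the linearization functor $\free$ of Definition \ref{def:free functor} and Proposition \ref{prop:trivially isomorphic}\ref{item:trivially isomorphic1} (grade by grade), this yields $\hzero(\K(\ba))\cong \hzero(\K^-(\ba))$ via the canonical map induced by inclusion, and naturality of this map with respect to $\ba\leq \bb$ finishes the proof. The main subtlety is making sure that ``positivity'' of an edge (defined using only the vertical neighbor $(b_1,b_2-1)$ of its birth grade) already forces the endpoints to be connected through edges whose birth grades are $\leq (b_1,b_2-1)$; this is what allows the inductive replacement to terminate strictly below $e_i$ in the chosen linear extension.
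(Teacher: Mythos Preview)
Your proof is correct and follows the same overall route as the paper: establish $\pi_0(\K(\ba))=\pi_0(\K^-(\ba))$ for all $\ba$, then conclude that the inclusion $\K^-\hookrightarrow\K$ induces an isomorphism on $\hzero$. The paper simply asserts the equality of $\pi_0$ as an observation and invokes Proposition~\ref{prop:trivially isomorphic}, whereas you supply the missing inductive justification (replacing each positive edge by a path of strictly earlier edges). One small remark: your appeal to Proposition~\ref{prop:trivially isomorphic}\ref{item:trivially isomorphic1} ``grade by grade'' is not quite the right citation---that item is about $\R$-indexed filtrations---but you do not actually need it, since your own argument via the natural inclusion already gives the pointwise isomorphism and its naturality directly.
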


\begin{proof}
\label{proof:isomorphic without positives} Observe that, for each $\ba\in\Z^2$, it holds that $\pi_0(\K(\ba))=\pi_0(\K^-(\ba))\in \subpart(X)$. Therefore, the two bipersistence treegrams $\pi_0(\K),\pi_0(\K^-):\Z^2\rightarrow \subpart(X)$ are the same. By Proposition \ref{prop:trivially isomorphic}, we have $\hzero(\K)\cong\free\circ\pi_0(\K)\cong\free\circ\pi_0(\K^-)\cong \hzero(\K^-)$.
\end{proof}

\begin{lemma}\label{prop:exactness for graphs} For any simplicial 1-complex, the following sequence is exact
\[0\xrightarrow{} Z_1(K) \xrightarrow{i} C_1(K) \xrightarrow{\partial_1} C_0(K) \xrightarrow{p} \hzero(K) \xrightarrow{} 0  ,\]
where $i$ is the canonical inclusion,  $\partial_1$ is the boundary map, and $p$ is the canonical quotient.
\end{lemma}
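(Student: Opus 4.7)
The plan is to verify exactness at each of the four spots in the sequence, each of which follows almost immediately from the relevant definitions. Since $K$ has no simplices of dimension $\geq 2$, the zeroth simplicial homology takes the simple form $\hzero(K) = C_0(K)/\im(\partial_1)$, and the first cycle group is $Z_1(K) = \ker(\partial_1)$ by definition.

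First I would check exactness at $Z_1(K)$: the map $i$ is the canonical inclusion of a subspace, hence injective, so $\ker(i) = 0$. Next, exactness at $C_1(K)$ amounts to the equality $\im(i) = \ker(\partial_1)$, which holds tautologically because $\im(i) = Z_1(K) = \ker(\partial_1)$ by the definition of $Z_1(K)$. Exactness at $C_0(K)$ says $\ker(p) = \im(\partial_1)$; since $p$ is the quotient map $C_0(K) \twoheadrightarrow C_0(K)/\im(\partial_1) = \hzero(K)$, this is again immediate. Finally, $p$ is surjective because it is a quotient map, so the sequence is exact at $\hzero(K)$ as well.

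The only step that requires even a moment of thought is identifying $\hzero(K)$ with $C_0(K)/\im(\partial_1)$, and this uses that $K$ is $1$-dimensional so that there is no boundary map $\partial_2 : C_2(K) \to C_1(K)$ to further quotient by, and the image of $\partial_1$ is precisely the subspace of $0$-chains spanned by boundaries of edges. Since there is no real obstacle here, I would simply state these four verifications briefly, perhaps as a single paragraph, and move on. No induction, diagram chasing, or auxiliary construction is needed.
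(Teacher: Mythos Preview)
Your proposal is correct, and in fact the paper omits the proof entirely, saying only that it is straightforward; your explicit four-step verification is exactly the kind of argument one would supply. One small remark: the identification $\hzero(K)=C_0(K)/\im(\partial_1)$ does not actually rely on $K$ being $1$-dimensional (it holds for any $K$ since $\partial_0=0$); the $1$-dimensionality hypothesis is in fact not needed anywhere in the lemma as stated, since the sequence uses $Z_1(K)$ rather than $H_1(K)$.
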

The proof is straightforward and thus we omit it.

For a persistence module $M$, let $IM$ denote the submodule of $M$ that is generated by the images of all linear maps $\varphi_{N}(\ba,\bb)$, with $\ba<\bb$ in $\Z^2$. We are now ready to prove Theorem \ref{thm:minimal resolution}.

\begin{proof}[Proof of Theorem \ref{thm:minimal resolution}]\label{proof:minimal resolution}Let us prove \ref{item:exactness}. 
By Lemma \ref{lem:isomorphic without positives}, $\hzero(\K^-)$ is isomorphic to $\hzero(\K)$ and thus it suffices to show the exactness of 
\[  0\xrightarrow{}Z_1(\K^-) \xrightarrow{i} C_1(\K^-) \xrightarrow{\partial_1} C_0(\K^-) \xrightarrow{p} \hzero(\K^-)\xrightarrow{} 0.\]
At each grade $\ba\in \Z^2$, we have the sequence of vector spaces and linear maps:
\[  0\xrightarrow{}Z_1(\K^-_{\ba}) \xrightarrow{i_{\ba}} C_1(\K^-_{\ba}) \xrightarrow{(\partial_1)_{\ba}} C_0(\K^-_{\ba}) \xrightarrow{p_{\ba}} \hzero(\K^-_{\ba})\xrightarrow{} 0,\]
which is exact by Lemma \ref{prop:exactness for graphs}.

Next, we prove \ref{item:minimality}. In the following proof, we assume the ground field $\F$ is $\Z_2$ for the sake of simplicity. We need to show that (a) $C_0(\K^-)$, $C_1(\K^-)$, and $Z_1(\K^-)$ are free modules, and that (b) the sequence in (\ref{eq:minimal resolution}) satisfies the minimality condition. Let us prove (a). By definition, it is clear that $C_0(\K^-)$ and $C_1(\K^-)$ are free . Also, $Z_1(\K^1)$, the kernel of $\partial_1$, is free by \cite[Section 6]{chacholski2017combinatorial}.\footnote{The authors of \cite{chacholski2017combinatorial} observe that for any two free modules $M,N:\Z^2\rightarrow \vect$, the kernel of any natural transformation $f:M\rightarrow N$ is a free module.} Let us check (b). We show that the image of $C_1(\K^-)$ via $\partial_1$ is contained in $IC_0(\K^-)$. It suffices to show that every generator of $C_1(\K^-)$ is mapped into $IC_0(\K^-)$. Pick any edge $x_ix_j$ ($i<j$) that appears in $\K^-$. Then, in the filtration $\K^-$, $x_ix_j$ is born at $(j,d_X^\Z(x_i,x_j))=:\ba$, whereas the vertices $x_i$ and $x_j$ are born at $(i,0)$ and $(j,0)$, respectively. Note that  $(i,0)<(j,0)<\ba$ $\mbox{in $\Z^2$}$. Therefore, $\partial_1|_{\ba}(x_ix_j)=x_i+x_j\in IC_0(\K^-)_{\ba}$.

Let us show that the image of $Z_1(\K^-)$ is contained in $IC_1(\K^-)$. To this end, it suffices to show that every generator of $Z_1(\K^-)$ is mapped into $IC_1(\K^-)$. Suppose that a cycle \[c\stackrel{(\ast)}{=}x_{i_1}x_{j_1}+x_{i_2}x_{j_2}+\ldots+x_{i_l}x_{j_l}
\] 
is born at grade $\bb=(b_1,b_2)\in \Z^2$ in $\K^-$. Consider the vertical restriction $\K^-|_{x=b_1}:\{b_1\}\times \Z\rightarrow \simp$. By the construction of $\K$ (Definition \ref{def:Z2 indexed}), 
at most one simplex can be added at each grade in $\K^-|_{x=b_1}$ as index increases. Thus, there exists a unique edge on the RHS of equality ($\ast$) which appears at $\bb$ in $\K^-|_{x=b_1}$ (if no edge is born at $\bb$, then $c$ cannot be born at $\bb$, contradicting the assumption). Without loss of generality, let $x_{i_l}x_{j_l}$ be such edge. We will show that $x_{i_l}x_{j_l}$ is not born at $\bb$ in $\K^-$ by contradiction. Suppose that $x_{i_l}x_{j_l}$ is born at $\bb$ in $\K^-$. This implies that  $x_{i_l}x_{j_l}$ is born at $\bb$ in $\K(\supset \K^-)$, generating the cycle $c$. This means that $x_{i_l}x_{j_l}$ is positive, contradicting the fact that $x_{i_l}x_{j_l}$ is taken in the filtration $\K^-$ whose edges are negative in $\K$. Therefore, every edge on the RHS of equality $(\ast)$ is born at grades strictly smaller than $\ba$. This implies that $\partial_1|_{\bb}(c)\in IC_1(\K^-)_{\bb}$, as desired. \end{proof}

%%%%%%%%%%%%%%%%%%%%%%%%%%%%%%%%%%%%%%%%%%%%%%%%%%%%%%%%%%%%%%%%%
\section{Computation and Algorithms}\label{sec:algorithm}

\subsection{Algorithm}
\begin{theorem}\label{thm:computation}
Let $(X, d_X, f_X)$ be a finite \filterMSshort{} with $n = |X|$. 

(a) We can compute the \newbarcodeshort{} $I_{\X}=\lmulti I_{x}:x\in X\rmulti$ in $O(n^2 \log n)$ time. 
If $X\subset \mathbb{R}^d$ for a fixed $d$ and $d_X$ the Euclidean distance, the time can be improved to $O(n^2 \alpha(n))$, where $\alpha(n)$ is the inverse Ackermann function. 

(b) Each $I_x \in I_{\X}$ has complexity $O(n)$.  Given $I_{\X}$, we can compute zeroth fibered barcode $B^L$ for any line $L$ with positive slope in $O(|B^L| \log n)$ time where $|B^L|$ is the size of $B^L$. 

(c) Given $I_{\X}$, we can compute the zeroth graded Betti numbers in $O(n^2)$ time. 

\end{theorem}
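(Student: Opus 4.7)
The plan is to process $X$ in increasing order of $f_X$ (with ties broken by the chosen compatible order $<$) and to maintain incrementally the single-linkage hierarchy on the growing vertex set $X_{\sigma}$. Sorting and relabelling so that $f_X(x_1) < \cdots < f_X(x_n)$ costs $O(n\log n)$ and is negligible in the final bound.

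For part (a), I would exploit the characterization, for $i \geq 2$,
\[
I_{x_i} \cap \bigl(\{\sigma\}\times \R\bigr) \;=\; \{\sigma\}\times\bigl[0,\,h_i(\sigma)\bigr), \qquad
h_i(\sigma) := \min_{j<i} u_{X_\sigma}(x_i,x_j),
\]
where $u_{X_\sigma}$ is the chain-ultrametric of \eqref{eq:ultrametric}; it is well known that $u_{X_\sigma}$ equals the bottleneck distance along the unique path in the MST $T_k$ of $X_k := \{x_1,\dots,x_k\}$ for $\sigma\in[f_X(x_k),f_X(x_{k+1}))$. So it suffices to maintain $T_k$ as $k$ grows from $1$ to $n$ and, each time $x_k$ is inserted, to detect exactly those $i<k$ for which $h_i$ strictly decreases, recording such events as corners of $I_{x_i}$. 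Inserting $x_k$ adds candidate edges $\{(x_k,x_j)\}_{j<k}$; comparing each such edge against the maximum-weight edge on the $T_{k-1}$-path it would close, and swapping where beneficial, can be carried out in $O(k\log k)$ via a link–cut tree (or a simple $O(k^2)$ array in the worst case, which we avoid). Summing over $k$ gives the $O(n^2\log n)$ bound. In fixed-dimensional Euclidean space one can first sort all $\binom{n}{2}$ distances in $O(n^2\log n)$, but then, using a bichromatic nearest-neighbor structure and a union–find data structure, all incremental MST updates collectively cost $O(n^2\alpha(n))$; since the sorted distance array is reused across steps (or replaced by bucket/radix primitives specific to the Euclidean case), the total bound improves to $O(n^2\alpha(n))$.

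For part (b), the monotonicity $h_i(f_X(x_i)) \ge h_i(f_X(x_{i+1})) \ge \cdots \ge h_i(f_X(x_n))$ shows that $I_{x_i}$ has at most $n-i$ steps, hence $O(n)$ descriptive complexity; the grand total over all $i$ is $\sum_i (n-i) = O(n^2)$, matching the algorithm's output size. For the fibered-barcode query along a positive-slope line $L$, by Theorem \ref{thm:fibered barcodes} we must compute $L\cap I_{x_i}$ for each $i$; storing for each $I_{x_i}$ the sorted sequence of corner $\sigma$-values permits, via binary search, locating the two boundary crossings that define $L \cap I_{x_i}$ in $O(\log n)$ time, and a preliminary $O(1)$ test on the staircase's minimal corner decides whether $L$ meets $I_{x_i}$ at all. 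Charging only the $|B^L|$ staircases that do contribute yields the claimed $O(|B^L|\log n)$ query bound.

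For part (c), by Remark \ref{rem:feature functions are graded Bettis} and Theorem \ref{thm:recover graded Bettis}, the graded Betti numbers $\beta_j^M$ are determined pointwise by $\gamma_0^\X,\gamma_1^\X,\gamma_2^\X$, which are sums over the corners of the $I_{x_i}$; the type ($0$, $1$, or $2$) of each corner is read off in $O(1)$ from local boundary information produced by the incremental MST updates. Summing these contributions over the $O(n^2)$ corners and then computing $\beta_1^M = \gamma_1^\X - \gamma_2^\X$ and $\beta_2^M = \gamma_2^\X - \gamma_1^\X$ (with $\beta_0^M = \gamma_0^\X$) pointwise on the $O(n^2)$ supporting grades takes $O(n^2)$ additional time.

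The main obstacle I anticipate is in (a): ensuring that each incremental MST update both updates $T_k$ correctly \emph{and} emits precisely the corner points that appear in the final staircodes, without recomputing $h_i$ from scratch for every $i<k$. The right accounting lemma is that each swap performed during an incremental update corresponds bijectively to one ``decrease event'' of some $h_i$, i.e.\ to exactly one corner of some $I_{x_i}$; since the total number of corners is $O(n^2)$ by part (b), the aggregate swap work is $O(n^2)$, and the logarithmic overhead (resp.\ inverse-Ackermann overhead in the Euclidean case) stems purely from the tree/union-find data structures. Once this lemma is in place, parts (b) and (c) follow largely by bookkeeping on the output of the algorithm in (a).
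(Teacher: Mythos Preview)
Your overall architecture for (a) --- maintain an incremental MST as points are inserted in $f_X$-order --- matches the paper, but two of your three parts have real gaps.

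\textbf{Part (b) is incorrect as written.} You say ``a preliminary $O(1)$ test on the staircase's minimal corner decides whether $L$ meets $I_{x_i}$ at all,'' and then charge only the $|B^L|$ staircases that contribute. But that $O(1)$ test must be performed on \emph{every} $I_{x_i}$, costing $O(n)$ per query regardless of $|B^L|$. When $|B^L|=o(n/\log n)$ this blows past the claimed $O(|B^L|\log n)$ bound. The paper avoids the linear scan by preprocessing: it builds, in $O(n^2\log n)$ time, a halfplane range-reporting structure over the lower-envelope endpoints of the staircases (one 1D structure for the horizontal rays, and $n$ halfplane structures for the vertical segments), so that the set of staircases intersecting $L$ is reported in $O(\log n + |B^L|)$ time before any per-staircase binary search is done. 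Without some output-sensitive reporting mechanism your query bound is $O(n + |B^L|\log n)$, not $O(|B^L|\log n)$.

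\textbf{The Euclidean improvement in (a) is not established.} You write ``first sort all $\binom{n}{2}$ distances in $O(n^2\log n)$,'' which already exceeds the target $O(n^2\alpha(n))$; the parenthetical about bucket/radix primitives does not repair this. The paper's actual mechanism is different and purely geometric: a packing argument shows that in fixed-dimensional Euclidean space, any two edges of $T_i$ incident to the new vertex $x_i$ subtend an angle at least $\pi/3$ at $x_i$, so only $O_d(1)$ edges of $T_i$ are new relative to $T_{i-1}$. Hence the sorted edge list of $T_{i-1}$ can be updated to that of $T_i$ in $O(n)$ time, and each treegram $\theta_i$ is built in $O(n\alpha(n))$ via union--find. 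Your bichromatic-nearest-neighbor sketch does not supply this bound.

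\textbf{On the general case of (a).} The paper does \emph{not} attempt your swap-equals-corner bijection; instead it computes the full treegram $\theta_i$ from $T_i$ at each step (sorting the $i-1$ MST edges in $O(i\log i)$, then union--find), and afterwards assembles all $I_x$ from $\{\theta_i\}$ in $O(n^2)$ by decorating each treegram with conqueror information. Your proposed accounting lemma --- that each MST swap during the insertion of $x_k$ corresponds bijectively to a strict decrease of some $h_i$ --- is plausible but not obvious (a single new edge can shorten bottleneck paths for several $i$ simultaneously, and conversely a decrease of $h_i$ need not be witnessed by the specific edge swapped out). If you pursue this route you must prove the bijection; the paper's treegram-then-assemble approach sidesteps it entirely at the same asymptotic cost.

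Part (c) is fine and matches the paper.
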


We sketch the proof of the above theorem in the remainder of this section, with missing details in Appendix \ref{algo_appendix}. 

Consider a function value $\sigma \in \mathbb{R}$, and recall that $X_\sigma$ consists of all points in $X$ with $f_X$ value at most $\sigma$. Let $\K_\sigma = \rips_{\bullet}(X_\sigma,d_X)$ denote the Rips filtration of $(X_\sigma, d_X)$ (recall Remark \ref{rem:treegrams arise from filtrations}). The corresponding 1-parameter treegram (dendrogram) is $\theta_\sigma:= \pi_0(\K_\sigma)$. 
On the other hand, for any $\sigma$, we can consider the \emph{complete weighted graph} $G_\sigma = (V_\sigma = X_\sigma, E_\sigma)$ with edge weight $w(x,x') = d_X(x,x')$ for any $x, x'\in X_\sigma$. It is folklore that the treegram $\theta_\sigma$ can be computed from the minimum spanning tree (MST) $T_\sigma$ of $G_\sigma$. 

Assume all points in $X$ are ordered $x_1, x_2, \ldots, x_n$ such that $f_X(x_i) \le f_X(x_j)$ whenever $i< j$, and set $\sigma_i = f(x_i)$ for $i\in [1,n]$. Note that as $\sigma$ varies, $X_\sigma$ only changes at $\sigma_i$. 
For simplicity, we set $\theta_i:= \theta_{\sigma_i} = \pi_0(\K_{\sigma_i})$, 
$G_i:= G_{\sigma_i}$ and $T_i:= MST(G_i)$ is the minimum spanning tree (MST) for the weighted graph $G_i$. 
Our algorithm depends on the following lemma, the proof of which is in Appendix \ref{app:dec_treegram}. 
\begin{lemma}\label{lem:recoverStaircode}
A decorated \newbarcodeshort{} for the finite \filterMSshort{} $(X, d_X, f_X)$ can be computed from the collection of treegrams $\{ \theta_i , i\in [1, n] \}$ in $O(n^2)$ time. 
\end{lemma}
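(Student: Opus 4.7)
The plan is to exploit the observation that, by Definition \ref{def:elder rule2}, each staircase $I_{x_k}$ is completely determined by its horizontal slices $I_{x_k} \cap (\{\sigma\} \times \RP)$, and these slices are constant as $\sigma$ ranges over $[\sigma_i, \sigma_{i+1})$ (with $\sigma_{n+1} := \infty$) because $X_\sigma = X_{\sigma_i}$ on that range. Moreover, for $\sigma \in [\sigma_i, \sigma_{i+1})$ the slice equals $[0, d_k^i)$, where $d_k^i$ is exactly the death time of $x_k$ in the elder-rule-barcode of the treegram $\theta_i$ (Definition \ref{def:elder rule barcode}). An analogous statement holds for the decoration $c_{x_k}$: on $[\sigma_i,\sigma_{i+1})$, a valid $<$-conqueror value is the ``killer'' of $x_k$ in $\theta_i$, i.e.\ the oldest element in the cluster into which $x_k$ merges at $\eps = d_k^i$. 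Thus the entire decorated staircode is determined by the collection $\{(d_k^i, c_k^i) : 1 \le k \le i \le n\}$.

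Next, I would show each treegram $\theta_i$ can be processed in $O(i)$ time. A treegram on $i$ points can be encoded as a rooted tree with $i$ leaves and $O(i)$ internal nodes, where each internal node corresponds to a merge event at a particular $\eps$-value. A single bottom-up DFS maintains, at each internal node $v$, the \emph{oldest} leaf (w.r.t.\ the chosen order $<$) contained in the subtree rooted at $v$. When processing a merge at $v$ with children $u_1, u_2$, let $o_1, o_2$ be the oldest leaves stored at $u_1, u_2$, and WLOG $o_1 < o_2$; then $o_2$ dies at the $\eps$-value of $v$, with killer $o_1$, and the oldest leaf stored at $v$ becomes $o_1$. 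Each node is visited $O(1)$ times and contributes $O(1)$ work, yielding $\{d_k^i\}_k$ and $\{c_k^i\}_k$ in $O(i)$ time. Summing over $i = 1, \ldots, n$ gives $O(n^2)$ time total.

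Third, assembly. For each $k \in [1,n]$, we collect the length-$(n{-}k{+}1)$ lists $(d_k^k, d_k^{k+1}, \ldots, d_k^n)$ and $(c_k^k, c_k^{k+1}, \ldots, c_k^n)$ across the processed treegrams, and output
\[
I_{x_k} \;=\; \bigcup_{i=k}^{n} \bigl[\sigma_i, \sigma_{i+1}\bigr) \times \bigl[0, d_k^i\bigr), \qquad c_{x_k}|_{[\sigma_i,\sigma_{i+1})} \equiv c_k^i.
\]
The total description has size $O(n^2)$, and both collection and output take $O(n^2)$ time. Combining the three steps, the decorated \newbarcodeshort{} is produced in $O(n^2)$ time, as claimed.

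The main obstacle is the second step: justifying that the elder-rule-barcode together with the conqueror assignment for a single treegram can be extracted in time linear in the number of its leaves. The key is that a naive approach (recomputing $[x_k]_\eps$ or scanning ancestors separately for each leaf) would cost $\Omega(i^2)$ per treegram and $\Omega(n^3)$ overall; the amortized $O(1)$-per-leaf bound requires that the DFS stores only the single oldest representative of each current cluster, so that deaths and killers are enumerated at the exact moment of each merge and never recomputed. One must also verify that this procedure produces a valid choice of $<$-conqueror function consistent with Definition \ref{def:elder rule2} (in the non-injective case, one first fixes a compatible total order $<$ and breaks ties consistently across all $\theta_i$), so that the assembled data truly forms a decorated \newbarcodeshort{} of $\X$.
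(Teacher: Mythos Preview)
Your proposal is correct and takes essentially the same approach as the paper's own proof: both process each treegram $\theta_i$ in $O(i)$ time by a bottom-up traversal that maintains the oldest leaf in each subtree, record at each merge which leaf is conquered and by whom, and then assemble the $\{(d_k^i,c_k^i)\}$ across $i$ into the staircodes in $O(n^2)$ total time. Your write-up is in fact somewhat more explicit than the paper's (you spell out why the slices are constant on $[\sigma_i,\sigma_{i+1})$, why the DFS is $O(i)$, and how ties are broken in the non-injective case), but the underlying algorithm is the same.
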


In light of the above result, the algorithm to compute \newbarcodeshort{} is rather simple: 
\begin{description}
\item[(Step 1):] We start with $T_0 =$ empty tree. At the $i$-th iteration, 
\begin{description}
\item[(Step 1-a)] 
we update $T_{i-1}$ (already computed) to obtain $T_i$; and 
\item[(Step 1-b)] compute $\theta_i$ from $T_i$ and $\theta_{i-1}$. 
\end{description} 
\item[(Step 2):] We use the approach described in the proof of Lemma \ref{lem:recoverStaircode} to compute the \newbarcodeshort{} in $O(n^2)$ time. 
\end{description}

For (Step 1-a), note that $G_i$ is obtained by inserting vertex $x_i$, as well as all $i-1$ edges between $(x_i, x_j)$, $j\in [1, i-1]$, into graph $G_{i-1}$. By \cite{chin1978algorithms}, one can update the minimum spanning tree $T_{i-1}$ of $G_{i-1}$ to obtain the  MST $T_i$ of $G_i$ in $O(n)$ time. 

For (Step 1-b), once all $i-1$ edges spanning $i$ vertices in $T_i$ are sorted, then we can easily build the treegram $\theta_i$ in $O(i\alpha (i)) = O(n \alpha(n))$ time, by using union-find data structure (see Figure \ref{fig:decorated_dendrogram} in Appendix \ref{app:dec_treegram}). Sorting edges in $T_i$ takes $O(i \log i) = O(n\log n)$ time. 
Hence the total time spent on (Step 1-b) for all $i\in[1, n]$ is $O(n^2 \log n)$. 

We remark that knowing the order of all edges in $T_{i-1}$ may not help, as compared to $T_{i-1}$, $T_i$ may have $\Omega(i)$ different edges newly introduced, and these new edges still need to be sorted. 
Nevertheless, we show in Appendix \ref{app:algo_proof} that if $X \subset \mathbb{R}^d$ for a fixed dimension $d$, then each $T_i$ will only have constant number of different edges compared to $T_{i-1}$, and we can sort all edges in $T_i$ in $O(n)$ time by inserting the new edges to the sorted list of edges in $T_{i-1}$. Hence $\theta_i$ can be computed in $O(n\alpha(n)) + O(n) = O(n\alpha(n))$ time for this case. 

Putting everything together, Theorem \ref{thm:computation} (a) follows. See Appendix \ref{app:algo_proof} for the proofs of (b) and (c). 

\begin{table}[]
\centering
\caption{Complexity Comparison with \cite{lesnick2015interactive} and \cite{lesnick2019computing} for computing the fibered barcode and graded Betti number of $\hzero(\K)$, where $\K$ is the 1-skeleton $\Z^2$-indexed of Rips bifiltration of \afilterMSshort{} of  $n$ points.  $\BML$ is the cardinality of the fibered barcode for query line $L$ of positive slope.}
\begin{tabular}{@{}lccc@{}}
\hline
 & Our Algorithm & RIVET \cite{lesnick2015interactive} & Graded Betti number \cite{lesnick2019computing} \\ \hline
Size of descriptor & $O(n^2)$ & $O(n^6) \sim O(n^{8})$ & $\Omega(n^2)$ \\
Fibered barcodes query time & $O(\BML \log n)$ & $O(\log n + \BML )$ & - \\
Computation time & $O(n^2 \log n)$ & $O(n^8) \sim O(n^9)$ & $\Omega(n^3)$ \\ \hline
\label{compare}
\end{tabular}
\end{table}

\subsection{Comparison with other algorithms}
\label{appendix:comparasion}

Let $\mathcal{\K}$ be the 1-skeleton of the $\Z^2$-indexed Rips filtration of \afilterMSshort{} $\XX$, where $\abs{X}=n$. Let $M := \hzero(\mathcal{\K})$.

\paragraph{Comparison with \cite{lesnick2015interactive}.}  Let $\kappa:=\kappa_x\kappa_y$, where $\kappa_x$ and $\kappa_y$ are the number of $x$ and $y$ coordinates in $\supp(\beta_0^M)\cup \supp(\beta_1^M)$, respectively. In our case $\kappa_x=n$ and \[\kappa_y=\mbox{(the number of negative edges in $\K$)},\] which is between $O(n)$ and $O(n^2)$. Let $m$ be the number of simplices in $\K$, which is $O(n^2)$.

 From the filtration $\K$, RIVET computes a certain data structure $\mathcal{A}^{\bullet}(M)$ of size $O\left(m \kappa^{2}\right)$ in $O\left(m^{3} \kappa+(m+\log \kappa) \kappa^{2}\right)$ time and $O\left(m^{2}+m \kappa^{2}\right)$ memory. This $\mathcal{A}^{\bullet}(M)$ allows efficient query about the fibered barcode of $M$ in $O(\log \kappa + \BML)$ where $\BML$ is the size of the fibered barcode $\barc (M|_L)$ for a positive slope line $L\in\Lcal$.

See Table \ref{compare} for the comparison of computational complexity between RIVET and our method.

\paragraph{Comparison with \cite{lesnick2019computing}.} The algorithm in \cite{lesnick2019computing} takes as input a short chain complex of free modules $F^{2} \stackrel{\partial^{2}}{\longrightarrow} F^{1} \stackrel{\partial^{1}}{\longrightarrow} F^{0}$ such that $M \cong \operatorname{ker} \partial^{1} / \operatorname{im} \partial^{2}$ and outputs a minimal presentation of a 2-parameter persistence module $M$, from which the graded Betti numbers of $M$ are readily computed. It runs in time $O\left(\sum_{i}\left|F^{i}\right|^{3}\right)$ and requires $O\left(\sum_{i}\left|F^{i}\right|^{2}\right)$ memory, where $|F^i|$ denotes the size of a basis of $F^i$. In our setting, we readily have  $|F^{0}| = 0, |F^{1}| = n$, $\abs{F^2}$=(the number of negative edges in $\K$) which is between $O(n)$ and $O(n^2)$. Therefore, in order to obtain the graded Betti numbers via the method in \cite{lesnick2019computing}, it takes at least $\Omega(n^3) $ time and $\Omega(n^2)$ memory.

\section{Discussion}\label{sec:conclusion}

Some open questions and conjectures follow: 

\begin{enumerate}[wide, labelwidth=!, labelindent=0pt]
    \item \textbf{Barcodes and \newbarcodes.} 
    {(1) Let $\XX$ be \afilterMSshort{}. If $x\in X$ has a constant conqueror, is the interval module supported by $I_{x}$ in (\ref{eq:barcode}) a summand of $\hzero(\ripsbi(\X))$? (2) By virtue of Theorem \ref{thm:strong elder rule}, if $\hzero(\ripsbi(\X))$ is interval decomposable, then the \newbarcodeshort{} is identical to the \emph{generalized persistence diagram} of $\hzero(\ripsbi(\X))$ \cite{kim2018rank}. In general: What is the relation between the \newbarcodeshort{} and the generalized persistence diagram?} 
    \item \textbf{Extension to $d$-\filterMSshort{}s.} Can we generalize our results to the setting of more than two parameters? Namely, for $d$-\filterMSshort{}s $\X^d:=(X,d_X,f_1,f_2,\ldots,f_{d})$, $f_i:X\rightarrow \R$, $i=1,\ldots,d$, can we recover the zeroth homological information of the $d+1$-parameter filtration induced by $\X^d$ by devising ``\anewbarcode{}'' of $\X^d$? Note that, under the assumption the set $\{\left(f_i(x)\right)_{i=1}^d\in \R^d: x\in X\}$ is totally ordered in the poset $\R^d$, a straightforward generalization of the elder-rule staircode is conceivable. However, without this strict assumption, it is not very clear how  \newbarcode{}s should be defined.
    \item \textbf{Extension to higher-order homology.} The  ambiguity mentioned  in the previous paragraph also arises when trying to devise an ``elder-rule-staircode'' for  higher-order homology of a multiparameter filtration; namely, when $k\geq 1$, the birth indices of $k$-cycles are not necessarily totally ordered in the multiparameter setting, and thus determining which cycle is older than another is not clear in general.
    \item \textbf{Metrics and stability.} 
    Recall that the collection $E(\X)$ of all possible \newbarcodeshort{}s of an \filterMSshort{} $\X$ is an invariant of $\X$ (the paragraph after Example \ref{ex:constant}). One possible metric between two collections of \newbarcodeshort{}s is the Hausdorff distance $d_{\mathrm{H}}^b$ in the metric space of barcodes over $\R^2$ with the generalized bottleneck distance $d_b$  \cite{botnan2018algebraic}. On the other hand, there exists a metric $d_{\mathrm{GH}}^1$ which measures the difference between \filterMSsshort{} \cite{carlsson2010multiparameter} (see also \cite{chazal2009gromov}) and let $d_{\mathrm{I}}$ be the interleaving distance between 2-parameter persistence modules \cite{lesnick}. Are there constants $\alpha,\beta>0$ such that for all \filterMSsshort{} $\X$ and $\Y$, the inequalities below hold? 
    \label{item:metrics and stability}
\[\alpha\cdot d_{\mathrm{I}} \left(\hzero\left(\ripsbi_{\bullet}(\X)\right),\hzero\left(\ripsbi_{\bullet}(\Y)\right) \right) \leq d_{\mathrm{H}}^b(E(\X),E(\Y))\leq \beta\cdot d_{\mathrm{GH}}^1(\X,\Y).
\]
\item \textbf{Completeness.} Recall that the collection $E(\X)$ of all the \newbarcode{}s of \afilterMSshort{} $\X$ is not a complete invariant (the paragraph after Example \ref{ex:constant}). How faithful is this collection in general? Is there any class of \filterMSshort{}s $\X$ such that $E(\X)$ completely characterizes $\X$? \label{item:completeness}
\end{enumerate}

\appendix

%%%%%%%%%%%%%%%%%%%%%%%%%%%%%%%%%%%%%%%%%%%%%%%%%%%%%%%%%%%%%%%%%%%%%%%%%%%%%%%%%%%%%%%%%%%%%%%%%%%%%%%%%%%%%%%%%%%%%%
\section{Missing details from Section \ref{sec:newbarcode}}\label{sec:proofs}

\begin{proof}[Proof of Proposition \ref{prop:staircode is an interval}]
\label{proof:staircode is an interval}  Let $x\in X$ be the point which achieves the minimum of $f_X$. Then $I_x=\{\sigeps\in \R^2:(f_X(x),0)\leq \sigeps\}$, the closed quadrant whose lower-left corner is $(f_X(x),0)$. Let $y\in X$ be a point which does not achieve the minimum of $f_X$. Define $u_y:\R\rightarrow \RP$ by sending $\sigma\in \R$ to the minimum $\eps\in\RP$ for which there exists $z\in X$ with $f_X(z)<f_X(y)$ such that $y$ belongs to the same block with $z$ in the partition $\pi_0(\rips_{\eps}(X_{\sigma},d_X))$ (see the paragraph after Definition \ref{def:treegram}). It is clear that $u_y$ is non-increasing. Also, since $X$ is finite, $u_y$ is piecewise constant. By observing $I_y=\{\sigeps\in\R^2: \sigma\in [f_X(y),0)\ \mbox{and}\  \eps\in [0,u_y(\sigma))\}$, we complete the proof.
\end{proof}

We precisely define the $j$-th type corner points of staircase intervals depicted in Figure \ref{fig:corner types}:

\begin{definition}[Types of corner points]\label{def:types of corners} Let $I$ be a staircase interval of $\R^2$. Fix $\ba\in \R^2$. This $\ba$ is a \emph{0-th type corner point} of $I$ if \[\one_I(\ba)=1, \ \ \  \lim_{\eps\rightarrow0+}\one_I(\ba-(\eps,0))=\lim_{\eps\rightarrow0+}\one_I(\ba-(0,\eps))=\lim_{\eps\rightarrow0+}\one_I(\ba-(\eps,\eps))=0.\] The point $\ba$ is a \emph{1-st type corner point} of $I$ if \[ \one_I(\ba)- \lim_{\eps\rightarrow0+}\one_I(\ba-(\eps,0))-\lim_{\eps\rightarrow0+}\one_I(\ba-(0,\eps))+\lim_{\eps\rightarrow0+}\one_I(\ba-(\eps,\eps))=-1.\] The point $\ba$ is a \emph{2-nd type corner point} of $I$ if \[\one_I(\ba)=0, \ \ \  \lim_{\eps\rightarrow0+}\one_I(\ba-(\eps,0))=\lim_{\eps\rightarrow0+}\one_I(\ba-(0,\eps))=\lim_{\eps\rightarrow0+}\one_I(\ba-(\eps,\eps))=1.\]
\end{definition}
We remark that Definition \ref{def:types of corners} is closely related to the \emph{differential} of an interval introduced in \cite{dey2019generalized}.

\section{Missing details from Section \ref{sec:decorated staircodes and treegrams}}
In order to show that Definition \ref{def:elder rule barcode} is well-defined, it suffices to show:

\begin{proposition}[Elder-rule-barcode is well-defined]\label{prop:elder-rule-barcode is well-defined for treegram}
Let $\theta_X:\R\rightarrow \subpart(X)$ be a treegram over $X$ and suppose that there exist different $y,z\in X$ with $b(y)=b(z)$. Consider two orders $<_1,<_2$ which are the same except for the pair $y,z$, i.e. $y<_1 z$ and $z <_2 y$.  Then,
$\lmulti \left[b(x),{d^{<_1}}(x)\right):x\in X\rmulti = \lmulti \left[b(x),{d^{<_2}}(x)\right):x\in X\rmulti.$
\end{proposition}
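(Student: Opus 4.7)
The plan is to split the multiset $\barc(\theta_X)$ into the contributions indexed by $x \notin \{y, z\}$, which I will show are invariant under the swap, and by $\{y, z\}$, for which the pair of death times is merely permuted as a multiset. First, for any $x \notin \{y, z\}$, the condition $x = \min([x]_\eps, <_i)$ depends only on pairwise comparisons between $x$ and elements of $[x]_\eps$; since $<_1$ and $<_2$ differ only on the pair $\{y, z\}$ while $x$ is distinct from both, each such pairwise comparison is unchanged, giving $d^{<_1}(x) = d^{<_2}(x)$ and hence identical intervals $[b(x), d^{<_i}(x))$.

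The key structural observation is that no $w \in X\setminus\{y, z\}$ can sit strictly between $y$ and $z$ in either order: if $y <_1 w <_1 z$, then since $<_1$ and $<_2$ agree on $\{y,w\}$ and on $\{w,z\}$ we would obtain $y <_2 w <_2 z$, contradicting $z <_2 y$; the opposite case $z <_1 w <_1 y$ similarly contradicts $y <_1 z$. Consequently, setting $A := \{w \in X\setminus\{y,z\} : w <_1 y\}$, the same set equals $\{w \in X\setminus\{y,z\} : w <_1 z\}$, and the corresponding equalities hold for $<_2$. This rigidity lets the four death times of interest be expressed in terms of three geometric quantities on $\theta_X$.

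Namely, write $b_0 := b(y) = b(z)$ and set
\[
\alpha := \inf\{\eps : [y]_\eps \cap A \neq \emptyset\}, \quad \beta := \inf\{\eps : [z]_\eps \cap A \neq \emptyset\}, \quad \gamma := \inf\{\eps : y \in [z]_\eps\},
\]
with the convention $\inf \emptyset = +\infty$. Unwinding Definition \ref{def:elder rule barcode} using the preceding paragraph yields $d^{<_1}(y) = \alpha$, $d^{<_1}(z) = \min(\beta, \gamma)$, $d^{<_2}(y) = \min(\alpha, \gamma)$, and $d^{<_2}(z) = \beta$. A short dichotomy then finishes the proof: if $\alpha \leq \gamma$, then at $\eps = \gamma$ we have $[z]_\gamma = [y]_\gamma \supseteq [y]_\alpha$, which meets $A$, so $\beta \leq \gamma$, and the converse is symmetric; in the complementary case $\alpha, \beta > \gamma$, the equality $[y]_\eps = [z]_\eps$ for all $\eps \geq \gamma$ forces $\alpha = \beta$. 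The required identity
\[
\lmulti \alpha,\, \min(\beta, \gamma) \rmulti \;=\; \lmulti \min(\alpha, \gamma),\, \beta \rmulti
\]
then reduces to $\lmulti \alpha, \beta \rmulti$ on both sides when $\alpha, \beta \leq \gamma$, and to $\lmulti \alpha, \gamma \rmulti$ on both sides when $\alpha, \beta > \gamma$.

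The one nontrivial step is the rigidity claim in the second paragraph, which crucially uses that $<_1$ and $<_2$ swap \emph{only} the pair $\{y, z\}$; once this is in hand, the remaining argument is routine bookkeeping on three nonnegative reals.
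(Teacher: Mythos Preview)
Your proof is correct and follows essentially the same approach as the paper's: both reduce to the pair $\{y,z\}$, introduce the merge time of $y$ and $z$ (your $\gamma$, the paper's $r_0$), and split into the case where $y$ and $z$ each meet an older element before merging with each other (your Case $\alpha,\beta\le\gamma$, the paper's Case~2) versus the case where they merge first (your Case $\alpha,\beta>\gamma$, the paper's Case~1). Your treatment is somewhat more explicit---you formalize the set $A$ of common predecessors and prove the ``rigidity'' claim that the paper tacitly uses---but the underlying idea is the same two-case analysis.
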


\begin{proof}
\label{proof:elder-rule-barcode is well-defined for treegram}  For $x\in X$ different from $y$ and $z$, it is clear that $\left[b(x),{d^{<_1}}(x)\right)=\left[b(x),{d^{<_2}}(x)\right)$. Hence, letting $b:=b(y)=b(z)$, it suffices to show that \[\lmulti \left[b,d^{<_1}(y)\right),\left[b,d^{<_1}(z)\right)\rmulti =\lmulti \left[b,d^{<_2}(y)\right),\left[b,d^{<_2}(z)\right)\rmulti,
\] or equivalently $\lmulti d^{<_1}(y),d^{<_1}(z)\rmulti =\lmulti d^{<_2}(y),d^{<_2}(z)\rmulti$. Assume that $y$ and $z$ merge at $\eps=r_0$ in $\theta_X$. Since $<_1$ and $<_2$ are the same except for the pair $y,z$, we use $<$ to denote both $<_1$ and $<_2$ \emph{when we compare $y,z$ with the other elements in $X$}. In the treegram $\theta_X$, there are only two possible cases: (Case 1) An element $w\in X$ with $w<y,z$ merges with the block containing both $y,z$ at $\eps=r_1\geq r_0$. Then, $d^{<_1}(y)=r_1$ and $d^{<_1}(z)=r_0$, whereas $d^{<_2}(y)=r_0$ and $d^{<_2}(z)=r_1$.  (Case 2) assume that there are $w_1<y$ and $w_2<z$ such that $w_1$ and $y$ merge at $\eps=r_2\leq r_0$ and $w_2$ and $z$ merge at $\eps=r_3\leq r_0$ (it is possible that $w_1=w_2$). Then, $d^{<_1}(y)=d^{<_2}(y)=r_2$ and $d^{<_1}(z)=d^{<_2}(z)=r_3$, completing the proof.
\end{proof}

\section{Missing details from Section \ref{sec:algorithm}} \label{algo_appendix}
\subsection{Proofs of Theorem \ref{thm:computation}} \label{app:algo_proof}

We first present a lemma needed for the proof of Theorem \ref{thm:computation} (a). For simplicity, we assume that all distances between points in $X$ (and thus edge weights in $G_i$s) are distinct. If this is not the case, we only need to fix a total order compatible with all distances for the algorithm to work in the same way. 

\begin{lemma}
\label{oldnever}
Given a graph $G = (V, E)$ with distinct edge weights, if $e\in E$ is the largest edge of a cycle $C$ in $G$, then $e$ will not appear in the MST of $G$.
\end{lemma}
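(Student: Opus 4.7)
The plan is to argue by contradiction via the standard cut-and-swap exchange argument that underlies the so-called cycle property of minimum spanning trees. Suppose, contrary to the statement, that $e \in E$ is the largest-weight edge on some cycle $C$ of $G$ but $e$ nevertheless appears in the MST $T$ of $G$. I would like to produce from $T$ a spanning tree of strictly smaller total weight, which contradicts the minimality of $T$.

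First, I would delete $e$ from $T$. Since $T$ is a tree, removing any single edge disconnects it into exactly two subtrees $T_1$ and $T_2$, whose vertex sets $V_1, V_2$ form a partition of $V$ with the two endpoints of $e$ lying on opposite sides. Next I would walk around the cycle $C$: starting from the endpoint of $e$ that lies in $V_1$ and traversing $C \setminus \{e\}$, one eventually arrives at the other endpoint of $e$, which lies in $V_2$. Therefore at least one edge $e'$ of $C \setminus \{e\}$ must have one endpoint in $V_1$ and the other in $V_2$; that is, $e'$ crosses the cut $(V_1, V_2)$.

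Now I would form $T' := (T \setminus \{e\}) \cup \{e'\}$. Because $e'$ reconnects the two components $T_1$ and $T_2$, the graph $T'$ is again a spanning tree of $G$. Since $e$ is by hypothesis the strictly largest-weight edge on $C$ and the weights are distinct, we have $w(e') < w(e)$, so the total weight of $T'$ is strictly less than that of $T$. This contradicts the assumption that $T$ is a minimum spanning tree of $G$, and the lemma follows.

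The argument is entirely routine; there is no real obstacle, only the need to verify carefully that the walk around $C \setminus \{e\}$ really does cross the cut induced by removing $e$ from $T$, which is immediate from a parity/endpoint argument. The distinctness of edge weights is used only to turn the inequality $w(e') \le w(e)$ into the strict inequality needed to contradict minimality.
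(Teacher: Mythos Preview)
Your proof is correct; it is the standard textbook proof of the cycle property via the cut-and-swap exchange argument, and there is no gap.

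The paper, however, takes a different route. Rather than deleting $e$ from a putative MST and swapping in a lighter edge of $C$ across the resulting cut, the paper builds a new cycle $C'$ by working in the cycle space over $\mathbb{F}_2$: for each edge $e_i$ of $C$ that is \emph{not} in the MST $T$, it adds in the fundamental cycle $C_i$ that $e_i$ creates when inserted into $T$, so that $C' = C + \sum_i C_i$ cancels every non-MST edge of $C$ except $e$. One then checks that $e$ remains the heaviest edge of $C'$, and since all other edges of $C'$ lie in the tree $T$, the edge $e$ must be the single non-tree edge of $C'$. Your argument is shorter and more self-contained, deriving the contradiction directly from the definition of an MST without appealing to any auxiliary MST property; the paper's argument instead reduces to the (equally standard) fact that in a fundamental cycle the non-tree edge is the heaviest, at the cost of a slightly more involved construction. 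Either approach is perfectly adequate here.
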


\begin{proof}
 Let us denote $e$ as the largest edge in the cycle $C$ of size $k+1$ where $C$ consists of edges ${e, e_1, e_2, ... , e_k}$. Also denote the MST of $G$ as $T$. From $C$ and $T$, We will give a way to construct new cycle $C'$ where all edges except $e$ belong to $T$.

Since $T$ is MST, for any $i\in\{1, 2,..., k\}$, if $e_i$ does not belong to $T$, adding $e_i$ will form a cycle $C_i$ where $e_i$ in the largest edge and the only non-MST edge in $C_i$. 

Construct new cycle $C'=C + \Sigma_{i \in \{ j | e_j \notin T\} } C_i $ where the addition is performed on $\mathbb{F}_2$.
 Every time we add $C_i$, it will cancel out $e_i$. Since we did for all non-MST edges, the resulting cycle $C'$ will consist of all MST edges plus $e$.

 We argue that $e$ is also the largest edge in $C'$. This holds because every time we add $C_i$, we knew $e_i$ is the largest edge in $C_i$, and because $|w(e)| \geq |w(e_i)|$ where $w$ is weight function on edges, we knew $e$ is also the largest edge in $C'$. By the property of MST (any non-MST edge is the largest edge in the cycle created by adding itself to MST.), we conclude that $e$ is a non-MST edge.
\end{proof}

\begin{figure}
\centering
\includegraphics[scale=.5]{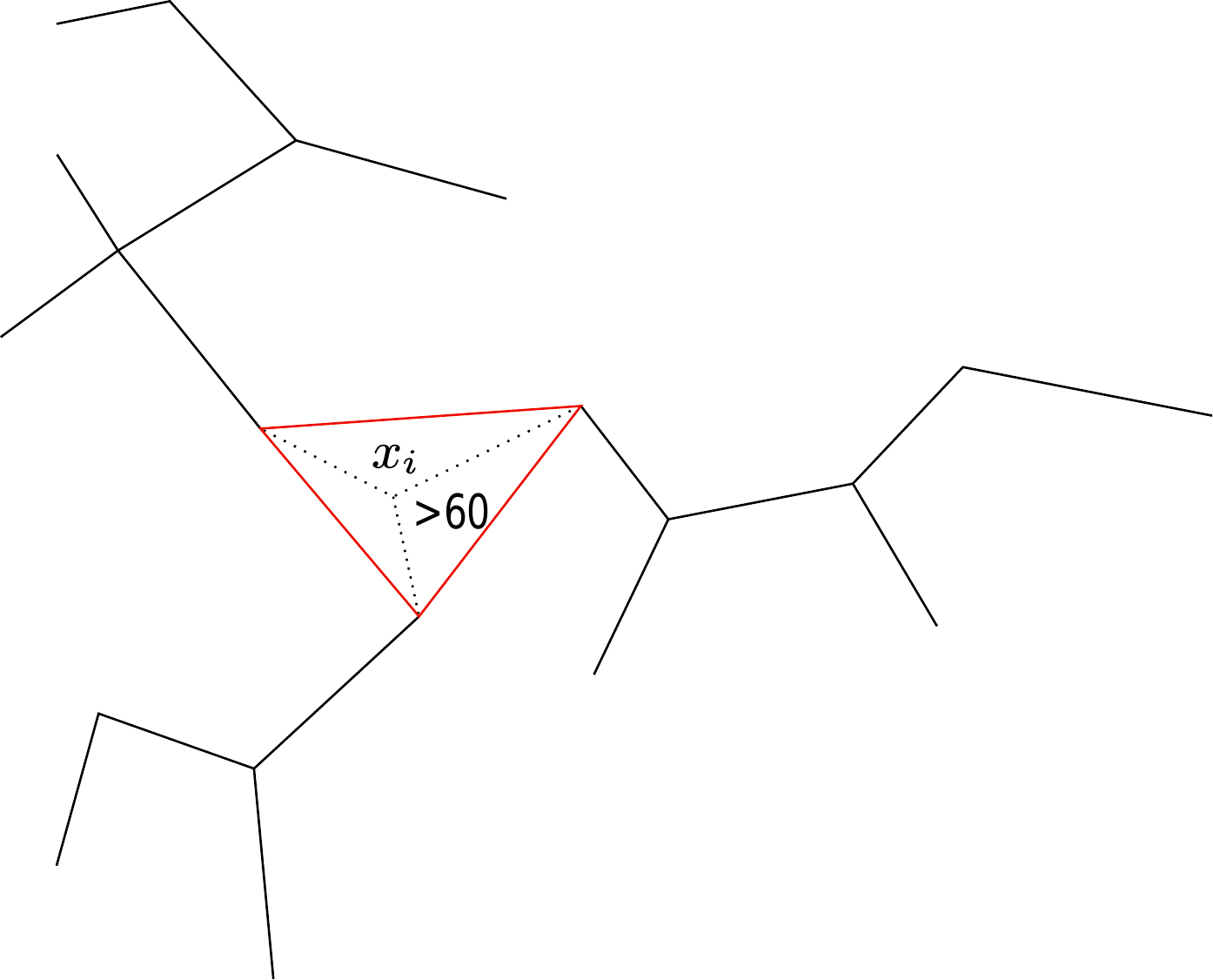}
\caption{Illustration of packing argument in Theorem \ref{thm:computation}. $x_i$ is the new vertex. Dashed edges are new edges entering $T_i$. Red edges are non-MST edges and therefore by the property of non-MST edges, angle corresponding to red edges must be at least $\frac{\pi}{3}$.}
\label{fig:pack}
\end{figure}

The following lemma, combined with the argument in the main text, will establish the time complexity of the algorithm to compute \newbarcodeshort{} for the case when $X$ is from a fixed dimensional Euclidean space $\mathbb{R}^d$. 

\begin{lemma}\label{lem:Euclideancase}
Let $T_{i-1}$ and $T_i$ be the MST of $G_{i-1}$ and $G_i$ as defined in the algorithm. For fixed dimensional $\mathbb{R}^d$ and $d_X$ to be Euclidean distance, the number of edges in $T_i \setminus T_{i-1}$ is $O(1)$ (depending on $d$). 
\end{lemma}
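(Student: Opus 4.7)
The plan is to combine two observations that together immediately yield the lemma: (i) every edge in $T_i \setminus T_{i-1}$ must be incident to the newly inserted vertex $x_i$, and (ii) the degree of $x_i$ in the Euclidean MST $T_i$ is bounded by a constant $c_d$ depending only on the ambient dimension $d$. Putting these together gives $|T_i \setminus T_{i-1}| \le \deg_{T_i}(x_i) \le c_d = O(1)$.

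For (i), I would argue by contradiction. Suppose $e = (x_j, x_k)$ with $j, k \le i-1$ lies in $T_i \setminus T_{i-1}$. Then $e$ is an edge of $G_{i-1}$ missing from its MST $T_{i-1}$, so adjoining $e$ to $T_{i-1}$ produces a unique cycle $C$ inside $G_{i-1}$ on which $e$ is the strictly largest edge (the cycle property of MSTs, which is essentially a restatement of Lemma \ref{oldnever}). This same cycle $C$ also sits inside $G_i$ with $e$ still its largest edge, and so Lemma \ref{oldnever} applied inside $G_i$ forces $e \notin T_i$, contradicting the assumption. Hence every edge of $T_i \setminus T_{i-1}$ is incident to $x_i$.

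For (ii), I would invoke the classical Euclidean MST angle lemma: for any two MST edges $(x_i, y), (x_i, z)$ sharing the endpoint $x_i$, the angle $\angle y\, x_i\, z$ is at least $\pi/3$. This in turn is a one-line application of Lemma \ref{oldnever} to the triangle $x_i y z$, since an angle strictly less than $\pi/3$ at $x_i$ would force (by the law of cosines) the longer of the two edges to be the strict maximum of the triangle, and hence to be excluded from $T_i$. A standard Euclidean packing bound then guarantees that the number of unit vectors in $\mathbb{R}^d$ with pairwise angle at least $\pi/3$ is at most some constant $c_d$ depending only on $d$ (the kissing number in fixed dimension; see Figure \ref{fig:pack}), so $\deg_{T_i}(x_i) \le c_d$.

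The main subtlety I anticipate, though I do not expect it to be a serious obstacle, is that both strict-maximum arguments implicitly need distinct edge weights. This is already guaranteed by the assumption stated at the start of this subsection that all pairwise distances in $X$ are distinct; for the non-generic case one simply fixes any tie-breaking total order compatible with $d_X$, and the cycle property used in Lemma \ref{oldnever} continues to hold relative to that order, so the two-step plan above goes through verbatim.
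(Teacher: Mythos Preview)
Your proposal is correct and follows essentially the same approach as the paper: first show via the cycle property (Lemma \ref{oldnever}) that every new edge of $T_i$ is incident to $x_i$, then use the $\pi/3$ angle lemma for Euclidean MSTs together with a sphere-packing bound to cap the degree of $x_i$ by a dimension-dependent constant. The paper phrases the angle step in the forward direction (the non-MST edge $ab$ is the longest side of $\triangle ax_ib$, hence the opposite angle is at least $\pi/3$), whereas you argue contrapositively via the law of cosines, but the content is identical.
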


\begin{proof}
Recall that $G_i$ is obtained by adding a new vertex $x_i$ and edges incident to $x_i$. 
First, note that by Lemma \ref{oldnever}, edges in $T_i = MST(G_i)$ are either from $T_{i-1} = MST(G_{i-1})$, or new edges just inserted. That is, no edge in $G_{i-1} \setminus T_{i-1}$ will contribute to $T_i$: This is such an edge will be the largest-weight edge of some cycle in $G_i$. 

We now prove that for the case where $X \subset \mathbb{R}^d$ only $O(1)$ (where the big-O hides terms depending on $d$) new edges (incident to $x_i$) can be in $T_i$. 

In particular, comparing $T_{i-1}$ and $T_i$, there are only two types of edges that are subject to change: 1) edges that are in $T_{i-1}$ but will leave $T_{i-1}$ and 2) edges incident to $x_i$ and will enter the new $T_i$.

Assume there are $k$ edges that will leave $T_{i-1}$. By deleting them, the original $T_{i-1}$ is decomposed into $k+1$ small trees. There must be $k+1$ edges incident to $x_i$ entering $T_i$. We denote those $k+1$ edges as $\mathcal{E}_{new, i} = \{x_ix_{i_1}, x_ix_{i_2}, ..., x_ix_{i_{k+1}}\}$

Pick any two nodes $a,b$ from $\mathcal{E}_{new, i} = \{x_{i_1}, x_{i_2}, ..., x_{i_{k+1}}\}$, they will from a triangle with $x_i$. By property of MST, edge $ab$ in triangle $\bigtriangleup_{abx_i}$ is the longest edge as $ab\notin T_i$ while $x_ia, x_ib \in T_i$. 
By elementary Euclidean geometry, it can be shown that angle $\sphericalangle ax_ib$ must be no less than $\frac{\pi}{3}$, and this holds for {\bf every pair} of nodes from $\mathcal{E}_{new, i} =\{x_{i_1}, x_{i_2}, ..., x_{i_{k+1}}\}$. Now by a packing argument, we can show that there can be $O(C^d)$ such well-separated points around $x_i$ in $\mathbb{R}^d$ for some constant $C$. 

Indeed, consider the unit sphere $S$ around $x_i$ in $\mathbb{R}^d$, and let $y_j$ be the intersection of the ray starting at $x_i$ and passing through $x_{i_j}$ with $S$. 
The previous paragraph establishes that the angle $\sphericalangle y_j x_i y_{j'} \ge \pi/3$ for any $j \neq j' \in [1, k+1]$. It then follows that the geodesic distance between $y_j$ and $y_{j'}$ on $S$ is at least $\pi/3$. In other words, geodesic balls of radius $\pi/6$ centered at $y_j$'s for $j\in [1, k+1]$ have to be all disjoint. The number of such balls (and thus $k+1$) is at most $Area(S) / B$ where $Area(S)$ stands for the surface volume of unit $d$-sphere in $\mathbb{R}^d$, while $B$ is the volume of a ($d-1$)-ball of radius $sin \frac{\pi}{6} = \frac{1}{2}$. Hence there exists some constant $C > 1$ such that $k = O(C^d)$. This proves the lemma. 
\end{proof}

We now present proofs for part (b) and (c) of Theorem \ref{thm:computation}.
\begin{lemma}
The size of \newbarcodeshort{} is $O(n^2)$.
\end{lemma}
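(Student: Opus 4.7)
The plan is to bound the descriptive complexity of each individual staircase $I_x \in \I_\X$ by $O(n)$, and then sum over the $n$ elements of $X$ to obtain the total bound of $O(n^2)$.

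First, I would recall the structure of $I_x$ established in the proof of Proposition \ref{prop:staircode is an interval}. For the $<$-minimal element of $X$, $I_x$ is just a quadrant and has constant complexity. For any non-minimal $x$, one has
\[
I_x = \{(\sigma,\eps)\in\R^2 : \sigma \in [f_X(x),\infty),\ \eps \in [0, u_x(\sigma))\},
\]
where $u_x:[f_X(x),\infty)\to\RP$ is the non-increasing piecewise-constant function sending $\sigma$ to the smallest $\eps$ such that $x$ lies in the same block as a strictly older point in $\pi_0(\rips_\eps(X_\sigma,d_X))$. The descriptive complexity of $I_x$ (that is, the total number of its type-0, type-1, and type-2 corner points in the sense of Definition \ref{def:types of corners}) is, up to a constant, equal to the number of breakpoints of the step function $u_x$.

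The key step is to show that $u_x$ has at most $n$ breakpoints. The value of $u_x(\sigma)$ depends only on the finite metric space $(X_\sigma,d_X)$ through its Rips filtration, and the set $X_\sigma$ itself changes only at the at most $n$ values $\sigma = f_X(x_j)$, $j=1,\ldots,n$, where a new point enters the sublevel set. Between any two consecutive such critical values the underlying metric space $(X_\sigma,d_X)$ does not change, so neither do the clustering partitions $\pi_0(\rips_\eps(X_\sigma,d_X))$ for any $\eps$; consequently $u_x$ is constant on each of these at most $n$ intervals. Hence $u_x$ has at most $n$ breakpoints, giving $I_x$ complexity $O(n)$.

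The main (and very minor) obstacle is merely verifying the monotonicity-plus-piecewise-constant structure of $u_x$ cleanly from the definition, which is already implicit in the proof of Proposition \ref{prop:staircode is an interval}. Summing over all $x \in X$ gives
\[
\sum_{x\in X}|I_x| = n\cdot O(n) = O(n^2),
\]
which is the claimed bound on the size of $\I_\X$.
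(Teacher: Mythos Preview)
Your proposal is correct and follows essentially the same approach as the paper: you bound each $I_x$ by $O(n)$ by observing that the step function $u_x$ (equivalently, the $\sigma$-coordinates of the corner points) can only change at the $n$ values $f_X(x_j)$, and then sum over $x\in X$. The paper's proof is more terse but makes the identical argument.
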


\begin{proof}
We claim that for every $x \in \mathcal{X}$, the size of $I_x$ is $O(n)$ and the lemma will then follow. This holds because every $I_x$ has a staircase shape, and the $x$-coordinates of corner points can only be one of the values $f_X(x_i)$ for some $x_i \in X$.
\end{proof}

\begin{figure}
\centering
\includegraphics[scale=.2]{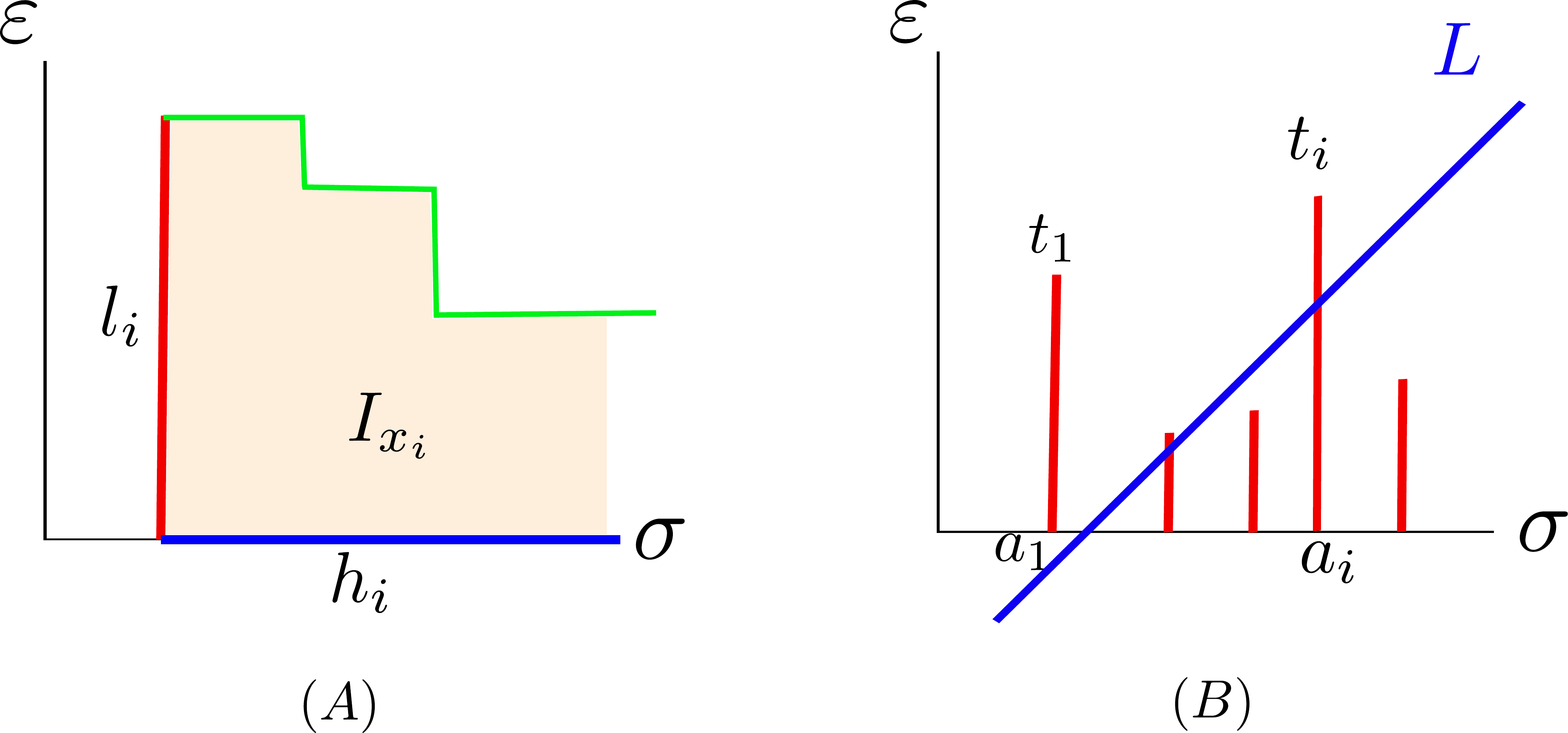}
\caption{(A) An illustration of $I_{x_i}$ where its lower envelope $l_i$ (vertical line segment) and $h_i$ (horizontal ray) are colored red and blue. (B) An illustration of Case 2 in Lemma \ref{lem:siez_of_staircode}}
\label{fig:query_time}
\end{figure}

\begin{lemma}
\label{lem:siez_of_staircode} 
Given $I_{\X}$, after $O(n^2\log n)$ time preprocessing, we can build a data structure of size $O(n^2)$ so that, given any line $L$ with positive slope, the zeroth fibered barcode $B^L$ w.r.t. $L$ can be computed in $O(|B^L| \log n)$ time where $|B^L|$ is the size of fibered barcode. 
\end{lemma}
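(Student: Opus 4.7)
My plan is to reduce the fibered-barcode recovery problem to halfplane range reporting by associating to each staircase $I_{x_i}$ a single ``check point'' whose position relative to $L$ determines whether $L \cap I_{x_i}$ is nonempty, and then computing each nonempty intersection by binary search within the sorted corner array of the corresponding staircase.

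The key geometric observation is the following. For each $i$, let $H_i := \sup\{\eps \geq 0 : (f_X(x_i),\eps) \in I_{x_i}\}$ (with $H_1 = +\infty$ for the oldest element $x_1$) and set $p_i := (f_X(x_i), H_i)$. Since the top boundary of the staircase $I_{x_i}$ is a non-increasing staircase starting at $p_i$, while any positive-slope line $L$ given by $\eps = m\sigma + c$ has strictly increasing $\eps$-coordinate in $\sigma$, the intersection $L \cap I_{x_i}$ is nonempty if and only if $p_i$ lies on or above $L$, equivalently $L(f_X(x_i)) \leq H_i$. The two cases indicated in Figure \ref{fig:query_time} correspond to whether $L$ enters $I_{x_i}$ through the vertical segment $l_i$ (when $0 \leq L(f_X(x_i)) \leq H_i$) or through the bottom ray of $\partial I_{x_i}$ on the $\sigma$-axis (when $L(f_X(x_i)) < 0$, which is still compatible with the same halfplane condition since $H_i \geq 0$). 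Consequently, by Theorem \ref{thm:fibered barcodes}, the number of check points $p_i$ on or above $L$ is exactly $|B^L|$.

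For preprocessing, I will sort the type-$1$ and type-$2$ corners of each $I_{x_i}$ by $\sigma$-coordinate into an array and extract $H_i$ from the topmost corner; this costs $O(n\log n)$ per staircase, $O(n^2 \log n)$ in total, using $O(n^2)$ space. I will then build a classical halfplane range-reporting structure of size $O(n)$ on the $n$ check points $\{p_i\}_{i=1}^n$ (handling $x_1$ separately, since $I_{x_1}$ is always intersected by $L$ and contributes a single unbounded interval) that answers ``report all $p_i$ on or above $L$''-queries in $O(\log n + k)$ time for output size $k$; this requires $O(n \log n)$ additional preprocessing. Given a query line $L$, I first enumerate the $|B^L|$ relevant indices in $O(\log n + |B^L|)$ time via this structure, and then, for each reported index $i$, compute the entry point of $L \cap I_{x_i}$ in constant time from $m$, $c$, and $f_X(x_i)$, and the exit point via a binary search for the unique staircase step on which $L$ crosses the top boundary of $I_{x_i}$, in $O(\log n)$ time. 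Summing gives the desired query bound of $O(|B^L|\log n)$.

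The main obstacle will be establishing the key geometric observation cleanly and uniformly across the two entry modes (Cases~1 and~2, cf.\ Figure \ref{fig:query_time}(B)) and verifying that the halfplane-reporting structure returns exactly the indices of staircases intersected by $L$. The edge case of the oldest element with $H_1 = +\infty$ is handled outside the halfplane machinery. All other ingredients --- sorting corners, building the halfplane structure, and binary searching on each monotone staircase --- are standard and introduce no further complications.
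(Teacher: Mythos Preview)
Your proposal is correct and in fact takes a cleaner route than the paper. The paper detects which staircases meet $L$ by splitting into two cases according to whether $L$ enters $I_{x_i}$ through the horizontal ray $h_i$ on the $\sigma$-axis or through the vertical segment $\ell_i$: for the first case it builds a 1D range-reporting structure on the left endpoints $a_i$, and for the second it builds a \emph{family} of $n$ halfplane-reporting structures $A_1,\ldots,A_n$ (one for each possible rightmost $a_r$), querying the appropriate $A_r$ at run time. Your key observation collapses both cases into one: since the upper boundary of $I_{x_i}$ is non-increasing while $L$ is strictly increasing, $L\cap I_{x_i}\neq\emptyset$ iff the single point $p_i=(f_X(x_i),H_i)$ (this is exactly the paper's $t_i$) lies above $L$. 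A single $O(n)$-size halfplane-reporting structure then suffices, replacing the paper's $n$ such structures plus the 1D structure. The remaining step---binary search along the monotone upper boundary of each reported staircase to locate the exit point---is identical to the paper's. Both arguments yield the stated $O(n^2\log n)$ preprocessing, $O(n^2)$ space, and $O(|B^L|\log n)$ query time; yours simply gets there with less machinery. One small correction: because the upper boundary of $I_{x_i}$ is open, the correct condition is that $p_i$ lies \emph{strictly} above $L$ rather than ``on or above''; this is a boundary case that does not affect the complexity.
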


\begin{proof}
First, given an $I_x$, recall that it has a staircase shape: see Figure \ref{fig:query_time}. In particular, its lower envelop consists of one vertical and one horizontal segment. Its upper envelope $U$ is the graph of a piecewise constant non-decreasing function in the plane consisting of $O(n)$ horizontal and vertical line segments. 
Given a line $L$ with positive slope, its intersection with the lower envelop of $I_x$ thus takes only $O(1)$ time. The upper envelope can only intersect with $L$ at most one point, either within some horizontal segment of $U$ or within a vertical segment of $U$. To identify this intersection point, we simply binary search twice: once among all horizontal segments, and once among all vertical segments, in $O(\log n)$ time. 

Next, we show that we can avoid checking all $n$ number of $I_x$s. Instead, we will compute only the set $\widehat{\I}_L$ of $I_x$s that will intersect $L$: Note that there are $k = |B^L|$ number of such \newstair{}s. In what follows, we describe how to preprocess all \newstair{}s so that this set  $\widehat{\I}_L$ can be reported in $O(\log n + k)$ time. 

Specifically, for any $x_i\in \mathcal{X}$, let $\ell_i$ and $h_i$ be the vertical and horizontal segments of the lower-envelop of $I_{x_i}$ -- see Figure \ref{fig:query_time} for an illustration. Note that each $h_i$ is in fact a half line in the $x$-axis. 
It is easy to see that the line $L$ intersects $I_{x_i}$ \emph{if and only if} $L$ intersects either $\ell_i$ or $h_i$.

\paragraph{Case 1: reporting intersection with $h_i$s.} 
Given the collection of all $h_i$s, $i\in [1, n]$, in $O(n\log n)$ time, we can build a standard 1D range reporting data structure of size $O(n)$, over the collection of left endpoints $a_i$'s of $h_i$s , $i\in [1,n]\}$, so that given a query point $b$, we can report all points in $\{a_i\}$ to the left of $b$, in $O(\log n + s)$ time where $s$ is the number of such points. 

Now given a query line $L$, let $b_L$ be the intersection between $L$ and the $x$-axis. We use the data strutcure to compute, say $k_1$ number of points from $\{a_i \}$ to the left of $b_L$, in $O(\log n + k_1)$ time. Each such point corresponds to a ray $h_i$ that will intersect $L$. 

\paragraph{Case 2: reporting intersection with $\ell_i$s.}
What remains is to build a data structure to report the set of $\ell_i$s intersecting $L$. 
To this end, note that for each $i\in [1,n]$, the point $a_i$ introduced above is also the bottom endpoint of the vertical segment $\ell_i$; let $t_i$ denote the top endpoint for $\ell_i$. 
Given a query line $L$, we wish to report all $i$'s such that $t_i$ is above $L$ while $a_i$ is below $L$. Again, let $b_L$ denote the intersection of $L$ with the $x$-axis: As the slope of $L$ is positive, if a vertical segment $\ell_i$ intersects $L$, then $a_i$ must lie to the right of $b_L$. 

Now for each $j\in [1, n]$, set 
$$A_j := \{ t_i \mid a_i \ge a_j \}. $$
Given $L$, let $a_r$ be the closest point to $b_L$ with $a_r \ge b_L$. 
Obviously, the line $L$ intersects $\ell_i$ \emph{if and only if} $t_i\in A_r$ and $t_i$ is above $L$. 
Hence we want to perform a halfplane range reporting query among the points in $A_r$. 
To this end, for each $i\in [1, n]$, we use the classic approach of \cite{CGL85} to build a data structure of size $O(|A_i|) =O(n)$ in time $O(|A_i|\log |A_i|) = O(n\log n)$, so that given a line $L$, the set of points from $A_i$ above $L$ can be reported in $O(\log n + s)$ time where $s$ is the number of such points. 
Overall, the total size of all such data structures for all $i\in [1, n]$ is $O(n^2)$ and can be constructed in $O(n^2 \log n)$ time. 
Given $L$, we first identify $a_r$ as described above, and then query for the set of $t_i$s from $A_r$ lying above $L$ in $O(\log n + k_2)$ time, where $k_2$ is the number of such $t_i$s. 

Putting {\sf Case 1} and {\sf Case 2} together, we can report all $k = k_1+k_2$ \newstair{}s $\widehat{\mathcal{I}}_L$ intersecting a query line $L$ of positive slope in time $O(\log n + k)$ time. 

Once we have $\widehat{\mathcal{I}}_L$, for each $I_x \in \widehat{\mathcal{I}}_L$, we use the procedure described at the beginning of this proof to compute the intersection between $L$ and $I_x$ in $O(\log n)$ time for each $I_x$. In total, it takes $O(k \log n)$ to compute all intersections. The total query time is $O(\log n + k + k\log n) = O(k\log n) = O(|B^L|\log n)$ as claimed. 
\end{proof}

\begin{lemma}
Given $I_{\X}$, we can compute the zeroth graded Betti numbers in $O(n^2)$ time. 
\end{lemma}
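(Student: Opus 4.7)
The plan is to use Theorem \ref{thm:recover graded Bettis}, which reduces the computation of the graded Betti numbers $\beta_0^M, \beta_1^M, \beta_2^M$ to the computation of the elder-rule feature functions $\gamma_0^{\X}, \gamma_1^{\X}, \gamma_2^{\X}$. Since $\gamma_j^{\X} = \sum_{x \in X} \gamma_j(I_x)$ by Definition \ref{def:elder rule feature functions}, and since each $\gamma_j(I_x)$ is entirely determined by the $j$-th type corner points of the staircase interval $I_x$ (see (\ref{eq:gamma j})), it suffices to enumerate and classify all corner points across every staircase in $\I_{\X}$ and then aggregate.

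First I would traverse each staircase $I_x \in \I_{\X}$ and enumerate its corner points together with their types (Definition \ref{def:types of corners}). By Theorem \ref{thm:computation}(b), each $I_x$ has descriptive complexity $O(n)$, so it contains $O(n)$ corner points, and classifying each as type-0, type-1, or type-2 is a constant-time local check on the staircase boundary (e.g., on its upper envelope, corners alternate between type-1 and type-2, and the unique lower-left minimum is type-0). Iterating over all $n$ staircases, this step produces the list of all corner points with types in $O(n^2)$ total time.

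Second I would aggregate these corners into the functions $\gamma_0^{\X}, \gamma_1^{\X}, \gamma_2^{\X}$. Since all corner coordinates live on the $O(n) \times O(n^2)$ integer grid introduced in Definition \ref{def:Z2 indexed}, I would use a hash table keyed on grid positions to accumulate multiplicities. The total number of corners across all staircases is $O(n^2)$, so this bookkeeping finishes in $O(n^2)$ expected time (or $O(n^2 \log n)$ worst case with a balanced BST; a radix-style bucket keyed by the $O(n^2)$ possible $y$-coordinates and $O(n)$ possible $x$-coordinates gives $O(n^2)$ deterministically).

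Finally, Theorem \ref{thm:recover graded Bettis} gives $\beta_0^M = \gamma_0^{\X}$, $\beta_1^M = \gamma_1^{\X} - \gamma_2^{\X}$, and $\beta_2^M = \gamma_2^{\X} - \gamma_1^{\X}$, where the subtraction is pointwise truncated. Since the combined support of $\gamma_1^{\X}$ and $\gamma_2^{\X}$ has size $O(n^2)$, the pointwise subtractions take $O(n^2)$ time, yielding the three graded Betti numbers in the claimed $O(n^2)$ total time. There is no substantial obstacle here: the main point is the size bound $O(n)$ per staircase from Theorem \ref{thm:computation}(b), which caps the total work at $O(n^2)$ after $\I_{\X}$ has been computed.
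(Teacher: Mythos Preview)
Your proposal is correct and follows essentially the same approach as the paper: both arguments observe that the $n$ staircases together have $O(n^2)$ corner points, compute the elder-rule feature functions $\gamma_j^{\X}$ from these corners, and then invoke Theorem~\ref{thm:recover graded Bettis} to obtain the graded Betti numbers. Your write-up is in fact more detailed about the aggregation step (hashing/bucketing on the grid) than the paper's own proof, which simply notes that only $O(n^2)$ grades can support nonzero values and evaluates the formulas there.
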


\begin{proof}
Since the total number of segments of \newbarcodeshort{} is $O(n^2)$ so is the number of corner points. In other words, only $O(n^2)$ grades could potentially have a non-zero $\gamma_i^\X$ or $\beta_i^M$ value, for $i= 0, 1$, or $2$. We can therefore compute graded Betti numbers according to the formula in Section \ref{thm:recover graded Bettis}, by evaluating $\gamma_i^\X$ and $\beta_i^M$ at each of the $O(n^2)$ possible grades. 
\end{proof}

\subsection{Proof of Lemma \ref{lem:recoverStaircode}} \label{app:dec_treegram}

\begin{figure}
\centering
\includegraphics[width=.2\textwidth]{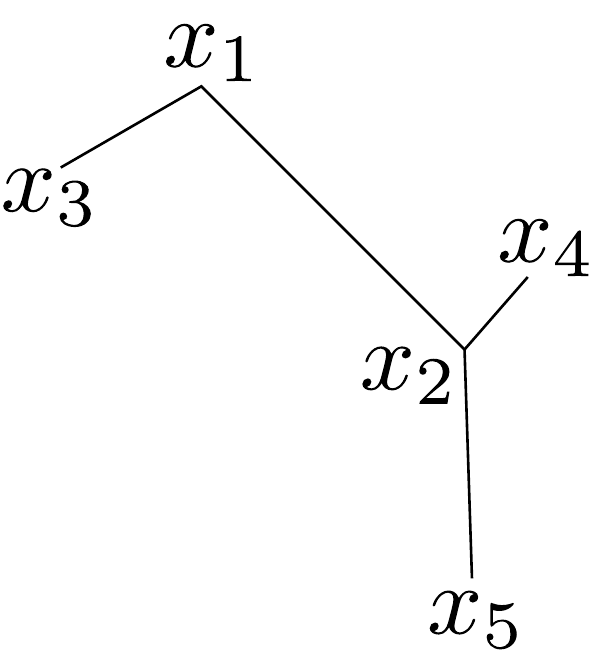}
\hspace{2 cm}
\includegraphics[width=.2\textwidth]{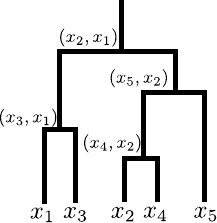}
\caption{The left figure shows a MST of 5 points where node with low(high) index appear early(later). The weight of each edge is the length of the edge (e.g, $w(x_1, x_3) > w(x_2, x_4)$). The right figure shows the corresponding decorated treegram. At each non-leaf node, we only need to store a tuple where the first number stands for the index of the node that is conquered while the second number stands for the index of the node that has not been conquered (eldest) in the subtree. }
\label{fig:decorated_dendrogram}
\end{figure}

\begin{figure}
\centering
\includegraphics[scale=.5]{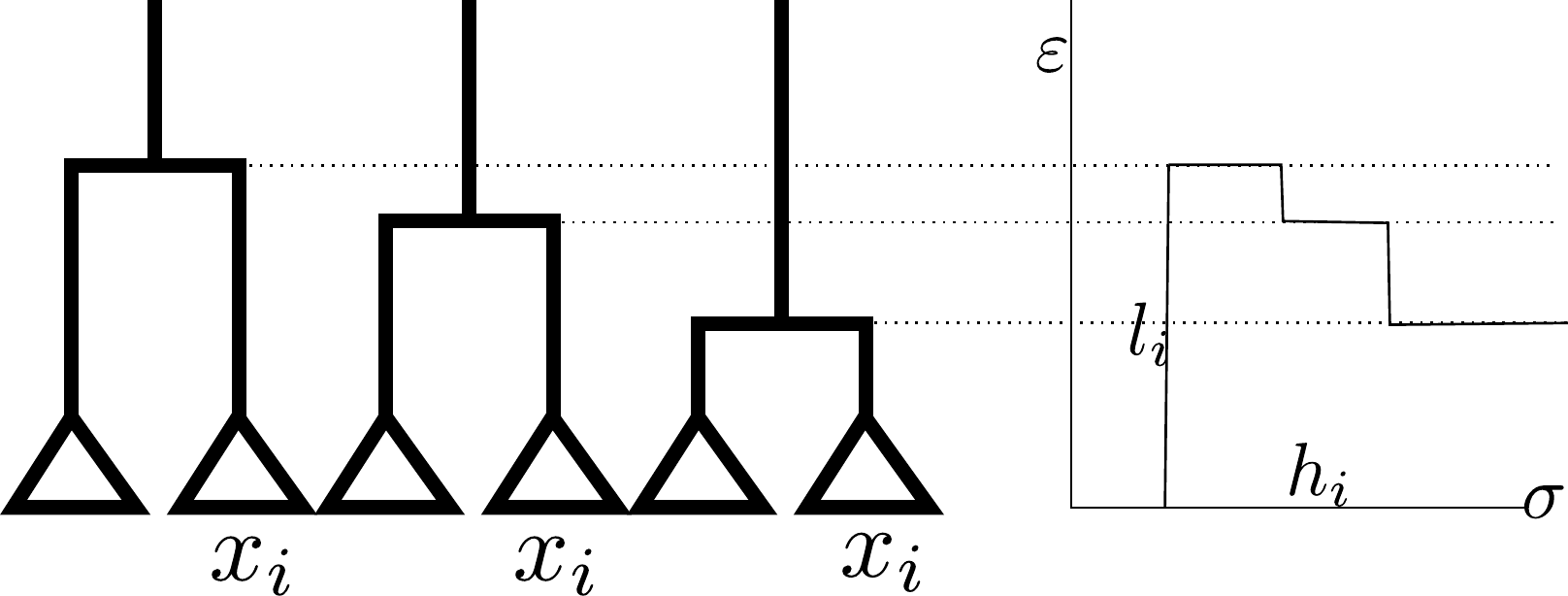}
\caption{Illustration of assembling process to recover $I_{x_i}$. Note that we do not plot the whole treegram at each function value for simplicity. $x_i$ here is a leaf in the right subtree of every treegram.
We will first compute the decorated treegrams, illustrated in Figure \ref{fig:decorated_dendrogram}. From these decorated treegrams, we are able to retrieve $\epsilon$ values of $I_x$ at for each of the $n$  function values $\sigma_1, \ldots, \sigma_n$ with $\sigma_i = f(x_i)$ and thus assemble $I_{x_i}$.} 
\label{fig:assemble}
\end{figure}

We now give a detailed description of the process to recover ER-staircode from the collection of treegrams in $O(n^2)$ time. Recall conqueror is defined in Section \ref{sec:decorated staircode}. When $x'$ is a conqueror of $x$ in $G_i=G_{\sigma_i}$, we also say $x$ is conquered by $x'$ at \emph{height} $u_{X_{\sigma_i}}(x, x')$. To convert treegrams at different function values to \newstair{}, we will decorate the treegrams with some extra information. On the high level, we need to keep the information about the node index conquered at different heights in the treegram, which can be done in linear time by traversing the treegram from bottom to top. 

Specifically, denote the sorted height values of treegram $\theta_i$ at function value $\sigma_i$ as $\mathcal{E}_i =\{\epsilon_1 < \epsilon_2 < ..., \epsilon_{i-1}\}$.
At each non-leaf node of height $\epsilon_j \in \mathcal{E}_i$ in the treegram $\theta_i$, we record a) the index of the node that is conquered at height $\epsilon_j$ and b) index of the single node in subtrees (rooted at height $\epsilon_j$) who has not been conquered yet. b) is needed to update a) of the node at height $\epsilon_{j+1}$ in constant time. Traversing treegrams bottom-up and compute a) and b) for every non-rooted node takes $O(i)=O(n)$ time. An illustration of the idea of decorated treegrams is shown in Figure \ref{fig:decorated_dendrogram}.

After computing $n$ decorated treegrams at $n$ function values, we can recover \newbarcodeshort{} by assembling decorated treegrams in the following way. Without loss of generality, we  state the process to recover single $I_x$ in \newbarcodeshort{}. For every function value $\epsilon_i$, find corresponding $\sigma$ (i.e., $u_{X_{\sigma_i}}(x, x')$) in $\mathcal{E}_i$ at which $x$ is conquered. Repeat this process for all function values will recover $I_x$. Figure \ref{fig:assemble} illustrates the idea.

We restate the Lemma \ref{lem:recoverStaircode} with a proof.

\begin{lemma}
A decorated \newbarcodeshort{} for the finite \filterMSshort{} $(X, d_X, f_X)$ can be computed from the collection of dendrograms $\{ \theta_i , i\in [1, n] \}$ in $O(n^2)$ time. 
\end{lemma}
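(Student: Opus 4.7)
The plan is to give an explicit two-phase algorithm: first decorate each treegram $\theta_i$ with information that identifies, for every leaf $x$, the height at which $x$ is conquered and by whom; then, for each $x\in X$, assemble its decorated staircase $I_x^\ast$ by reading off this information across the $n$ treegrams.

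For the decoration phase, I would fix $i$ and process the internal nodes of $\theta_i$ in order of non-decreasing merge height (a single bottom-up sweep of the binary tree underlying $\theta_i$). At a leaf $x$, set $\mathrm{eldest}(x):=x$. At an internal node $v$ of height $\epsilon$ merging the two children whose subtree-eldest pointers point to $e_L$ and $e_R$ (so, by definition, $e_L=\min([e_L]_{(\sigma_i,\eps^-)},<)$ and similarly for $e_R$), without loss of generality let $e_L<e_R$; then $e_R$ is conquered by $e_L$ at height $\epsilon$ in $\theta_i$, so we record $\mathrm{conq}(e_R,\sigma_i):=(\epsilon,e_L)$ and set $\mathrm{eldest}(v):=e_L$. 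The key combinatorial point is that every merge conquers exactly one previously-unconquered leaf (the eldest of the younger-eldest subtree), so a single bottom-up pass producing the decorated treegram of Figure~\ref{fig:decorated_dendrogram} takes $O(i)=O(n)$ time. Summed over $i=1,\ldots,n$ this phase costs $O(n^2)$.

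For the assembly phase, fix $x\in X$ with birth index $i_0$ (so $f_X(x)=\sigma_{i_0}$). By Definition~\ref{def:elder rule2}, $I_x=\{(\sigma,\eps):\sigma\ge \sigma_{i_0},\ \eps<\eta_x(\sigma)\}$ where $\eta_x(\sigma)$ is the height at which $x$ is first conquered in $\theta_\sigma$ (set to $+\infty$ if $x$ is eldest in all of $X_\sigma$). The function $\eta_x$ is non-increasing and piecewise constant, changing value only at some of the $\sigma_i$ with $i\ge i_0$, so $I_x$ is completely determined by the $n-i_0+1$ values $\eta_x(\sigma_i)$ together with the corresponding conquerors. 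Each such value is read off in $O(1)$ from the lookup table $\mathrm{conq}(x,\sigma_i)$ produced in phase one. Hence assembling $I_x^\ast$ costs $O(n)$, and summing over all $x\in X$ gives $O(n^2)$ for this phase.

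The only step that needs care is verifying that the single bottom-up pass actually records, for every leaf $y\neq x_1$, the correct conqueror at level $\sigma_i$: I would argue this by induction on the height of the merge nodes. At the moment the sweep processes node $v$, the claim is that $\mathrm{eldest}(v)$ is the $<$-minimum element of the block represented by $v$ in the sub-partition $\theta_i(\epsilon^-)$ just below $v$, which is immediate from the induction hypothesis applied to the two children; the leaf that is conquered at $v$ is then exactly the eldest of the sibling subtree whose $\mathrm{eldest}$ is larger than the other, matching the conqueror definition of Section~\ref{sec:decorated staircode}. Combining both phases, the total runtime is $O(n^2)$, as claimed.
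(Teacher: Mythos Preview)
Your proposal is correct and follows essentially the same two-phase strategy as the paper: a bottom-up sweep of each $\theta_i$ that maintains the ``eldest leaf'' of each subtree and records, at every merge, which leaf is conquered and by whom (exactly the paper's decoration of Figure~\ref{fig:decorated_dendrogram}), followed by an $O(n)$-per-point assembly of $I_x^\ast$ by table lookup across the $n$ treegrams. Your write-up is in fact more explicit than the paper's---in particular, your inductive justification that $\mathrm{eldest}(v)$ is the $<$-minimum of the block at $v$, and hence that the recorded $e_L$ is a valid $<$-conqueror of $e_R$, is a detail the paper leaves implicit.
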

\begin{proof}
The decoration of every treegrams takes $O(n)$ time and in total $O(n^2)$ for $n$ treegrams. Assembling $I_x$ for each $x\in \mathcal{X}$ takes $O(n)$ time since the complexity of every $I_x$ is $O(n)$ so in total recovering \newbarcodeshort{} takes $O(n^2)$ time.
For the correctness, we prove our process can recover $I_x$ for every $x \in \mathcal{X}$. This holds because  for any $x \in \mathcal{X}$ and $\sigma_i \in f_X$ we can recover $u_{X_{\sigma_i}}(x, x')$ where $x'$ is the conqueror of $x$. 
\end{proof}

\bibliographystyle{plain}
\bibliography{biblio}

\begin{thebibliography}{10}

\bibitem{asashiba2018interval}
Hideto Asashiba, Micka{\"e}l Buchet, Emerson~G Escolar, Ken Nakashima, and
  Michio Yoshiwaki.
\newblock On interval decomposability of 2{D} persistence modules.
\newblock {\em arXiv preprint arXiv:1812.05261}, 2018.

\bibitem{asashiba2019approximation}
Hideto Asashiba, Emerson~G Escolar, Ken Nakashima, and Michio Yoshiwaki.
\newblock On approximation of $2 $ {D} persistence modules by
  interval-decomposables.
\newblock {\em arXiv preprint arXiv:1911.01637}, 2019.

\bibitem{azumaya1950corrections}
Gor{\^o} Azumaya et~al.
\newblock Corrections and supplementaries to my paper concerning
  krull-remak-schmidt's theorem.
\newblock {\em Nagoya Mathematical Journal}, 1:117--124, 1950.

\bibitem{bauer2019cotorsion}
Ulrich Bauer, Magnus~B Botnan, Steffen Oppermann, and Johan Steen.
\newblock Cotorsion torsion triples and the representation theory of filtered
  hierarchical clustering.
\newblock {\em arXiv preprint arXiv:1904.07322}, 2019.

\bibitem{botnan2018algebraic}
Magnus Botnan and Michael Lesnick.
\newblock Algebraic stability of zigzag persistence modules.
\newblock {\em Algebraic \& geometric topology}, 18(6):3133--3204, 2018.

\bibitem{brooksbank2008testing}
Peter~A Brooksbank and Eugene~M Luks.
\newblock Testing isomorphism of modules.
\newblock {\em Journal of Algebra}, 320(11):4020--4029, 2008.

\bibitem{cai2020understanding}
Chen Cai and Yusu Wang.
\newblock Understanding the power of persistence pairing via permutation test.
\newblock {\em arXiv preprint arXiv:2001.06058}, 2020.

\bibitem{campello2013density}
Ricardo~JGB Campello, Davoud Moulavi, and J{\"o}rg Sander.
\newblock Density-based clustering based on hierarchical density estimates.
\newblock In {\em Pacific-Asia conference on knowledge discovery and data
  mining}, pages 160--172. Springer, 2013.

\bibitem{CZ09}
G.~Carlsson and A.~Zomorodian.
\newblock The theory of multidimensional persistence.
\newblock {\em Discrete \& Computational Geometry}, 42(1):71--93, 2009.

\bibitem{carlsson2010characterization}
Gunnar Carlsson and Facundo M{\'e}moli.
\newblock Characterization, stability and convergence of hierarchical
  clustering methods.
\newblock {\em Journal of machine learning research}, 11(Apr):1425--1470, 2010.

\bibitem{carlsson2010multiparameter}
Gunnar Carlsson and Facundo M{\'e}moli.
\newblock Multiparameter hierarchical clustering methods.
\newblock In {\em Classification as a Tool for Research}, pages 63--70.
  Springer, 2010.

\bibitem{carlsson2009theory}
Gunnar Carlsson and Afra Zomorodian.
\newblock The theory of multidimensional persistence.
\newblock {\em Discrete \& Computational Geometry}, 42(1):71--93, 2009.

\bibitem{carriere2019general}
Mathieu Carriere, Fr{\'e}d{\'e}ric Chazal, Yuichi Ike, Th{\'e}o Lacombe, Martin
  Royer, and Yuhei Umeda.
\newblock A general neural network architecture for persistence diagrams and
  graph classification.
\newblock {\em arXiv preprint arXiv:1904.09378}, 2019.

\bibitem{cerri2013betti}
Andrea Cerri, Barbara~Di Fabio, Massimo Ferri, Patrizio Frosini, and Claudia
  Landi.
\newblock Betti numbers in multidimensional persistent homology are stable
  functions.
\newblock {\em Mathematical Methods in the Applied Sciences},
  36(12):1543--1557, 2013.

\bibitem{chacholski2017combinatorial}
Wojciech Chacholski, Martina Scolamiero, and Francesco Vaccarino.
\newblock Combinatorial presentation of multidimensional persistent homology.
\newblock {\em Journal of Pure and Applied Algebra}, 221(5):1055--1075, 2017.

\bibitem{chazal2009gromov}
Fr{\'e}d{\'e}ric Chazal, David Cohen-Steiner, Leonidas~J Guibas, Facundo
  M{\'e}moli, and Steve~Y Oudot.
\newblock Gromov-{H}ausdorff stable signatures for shapes using persistence.
\newblock In {\em Computer Graphics Forum}, volume 28 (5), pages 1393--1403.
  Wiley Online Library, 2009.

\bibitem{chazal2011scalar}
Fr{\'e}d{\'e}ric Chazal, Leonidas~J Guibas, Steve~Y Oudot, and Primoz Skraba.
\newblock Scalar field analysis over point cloud data.
\newblock {\em Discrete \& Computational Geometry}, 46(4):743, 2011.

\bibitem{CGL85}
Bernard Chazelle, Leo~J. Guibas, and D.~T. Lee.
\newblock The power of geometric duality.
\newblock {\em BIT Numerical Mathematics}, 25(1):76--90, Mar 1985.

\bibitem{chin1978algorithms}
Francis Chin and David Houck.
\newblock Algorithms for updating minimal spanning trees.
\newblock {\em Journal of Computer and System Sciences}, 16(3):333--344, 1978.

\bibitem{cohen2007stability}
David Cohen-Steiner, Herbert Edelsbrunner, and John Harer.
\newblock Stability of persistence diagrams.
\newblock {\em Discrete \& Computational Geometry}, 37(1):103--120, 2007.

\bibitem{curry2018fiber}
Justin Curry.
\newblock The fiber of the persistence map for functions on the interval.
\newblock {\em Journal of Applied and Computational Topology}, 2(3-4):301--321,
  2018.

\bibitem{dey2018computing}
Tamal~K Dey and Cheng Xin.
\newblock Computing bottleneck distance for $2 $-d interval decomposable
  modules.
\newblock In {\em Proceedings of the thirty-fourth International Symposium on
  Computational Geometry (SoCG 2018)}, pages 32:1--32:15, 2018.

\bibitem{dey2019generalized}
Tamal~K Dey and Cheng Xin.
\newblock Generalized persistence algorithm for decomposing multi-parameter
  persistence modules.
\newblock {\em arXiv preprint arXiv:1904.03766}, 2019.

\bibitem{edelsbrunner2010computational}
Herbert Edelsbrunner and John Harer.
\newblock {\em Computational topology: an introduction}.
\newblock American Mathematical Soc., 2010.

\bibitem{eisenbud2013commutative}
David Eisenbud.
\newblock {\em Commutative Algebra: with a view toward algebraic geometry},
  volume 150.
\newblock Springer Science \& Business Media, 2013.

\bibitem{harrington2019stratifying}
Heather~A Harrington, Nina Otter, Hal Schenck, and Ulrike Tillmann.
\newblock Stratifying multiparameter persistent homology.
\newblock {\em SIAM Journal on Applied Algebra and Geometry}, 3(3):439--471,
  2019.

\bibitem{Hofer2017Deep}
Christoph Hofer, Roland Kwitt, Marc Niethammer, and Andreas Uhl.
\newblock Deep learning with topological signatures.
\newblock In {\em Advances in Neural Information Processing Systems}, pages
  1634--1644, 2017.

\bibitem{kim2018rank}
Woojin Kim and Facundo M{\'e}moli.
\newblock Generalized persistence diagrams for persistence modules over posets.
\newblock {\em arXiv preprint arXiv:1810.11517}, 2018.

\bibitem{kim2018spatio-temporal}
Woojin Kim and Facundo M{\'e}moli.
\newblock \href{https://doi.org/10.1007/s00454-019-00168-w}{Spatiotemporal
  persistent homology for dynamic metric spaces}.
\newblock {\em Discrete {\&} Computational Geometry}, pages 1--45, 2020.

\bibitem{knudson2007refinement}
Kevin~P. Knudson.
\newblock A refinement of multi-dimensional persistence.
\newblock {\em Homology, Homotopy and Applications}, 10(1):259--281, 2008.

\bibitem{landi2018rank}
Claudia Landi.
\newblock The rank invariant stability via interleavings.
\newblock In {\em Research in Computational Topology}, pages 1--10. Springer,
  2018.

\bibitem{lesnick}
Michael Lesnick.
\newblock The theory of the interleaving distance on multidimensional
  persistence modules.
\newblock {\em Found. Comput. Math.}, 15(3):613--650, June 2015.

\bibitem{lesnick2015interactive}
Michael Lesnick and Matthew Wright.
\newblock Interactive visualization of 2-d persistence modules.
\newblock {\em arXiv preprint arXiv:1512.00180}, 2015.

\bibitem{lesnick2019computing}
Michael Lesnick and Matthew Wright.
\newblock Computing minimal presentations and betti numbers of 2-parameter
  persistent homology.
\newblock {\em arXiv preprint arXiv:1902.05708}, 2019.

\bibitem{martinez2018density}
{\'A}lvaro Mart{\'\i}nez-P{\'e}rez.
\newblock A density-sensitive hierarchical clustering method.
\newblock {\em Journal of Classification}, 35(3):481--510, 2018.

\bibitem{mccleary2019multiparameter}
Alex McCleary and Amit Patel.
\newblock Multiparameter persistence diagrams.
\newblock {\em arXiv preprint arXiv:1905.13220v3}, 2019.

\bibitem{miller2017data}
Ezra Miller.
\newblock Data structures for real multiparameter persistence modules.
\newblock {\em arXiv preprint arXiv:1709.08155}, 2017.

\bibitem{munkres1984elements}
James~R Munkres.
\newblock {\em Elements of algebraic topology}.
\newblock Addison-Wesley Menlo Park, 1984.

\bibitem{patel2018generalized}
Amit Patel.
\newblock Generalized persistence diagrams.
\newblock {\em Journal of Applied and Computational Topology}, 1(3-4):397--419,
  2018.

\bibitem{peeva2010graded}
Irena Peeva.
\newblock {\em Graded syzygies}, volume~14.
\newblock Springer Science \& Business Media, 2010.

\bibitem{smith2016hierarchical}
Zane Smith, Samir Chowdhury, and Facundo M{\'e}moli.
\newblock Hierarchical representations of network data with optimal distortion
  bounds.
\newblock In {\em 2016 50th Asilomar Conference on Signals, Systems and
  Computers}, pages 1834--1838. IEEE, 2016.

\bibitem{vipond2020multiparameter}
Oliver Vipond.
\newblock Multiparameter persistence landscapes.
\newblock {\em Journal of Machine Learning Research}, 21(61):1--38, 2020.

\bibitem{ZW19}
Qi~Zhao and Yusu Wang.
\newblock Learning metrics for persistence-based summaries and applications for
  graph classification.
\newblock In {\em 33rd Annu. Conf. Neural Inf. Processing Systems (NeuRIPS)},
  2019.
\newblock to appear.

\bibitem{zomorodian2005computing}
Afra Zomorodian and Gunnar Carlsson.
\newblock Computing persistent homology.
\newblock {\em Discrete \& Computational Geometry}, 33(2):249--274, 2005.

\end{thebibliography}
\end{document}